\documentclass[12pt,a4paper]{article}
\usepackage{amsmath,amssymb,amsthm,graphicx,color,bbm,array}
\usepackage[left=1in,right=1in,top=1in,bottom=1in]{geometry}
\usepackage{cite}
\usepackage[british]{babel}
\usepackage{hyperref} 
\usepackage{enumitem}
\usepackage{tikz,pgfplots}
\usetikzlibrary{decorations.markings,decorations.pathmorphing,shapes}
\usepackage{caption}
\usepackage{subcaption}

\pgfplotsset{ticks=none}
\pgfplotsset{compat=1.18}

\usepackage{xcolor}

\newcounter{sarrow}

\hypersetup{
    colorlinks=false,
    pdfborder={0 0 0},
}

\newtheorem{theorem}{Theorem}[section]
\newtheorem{corollary}[theorem]{Corollary}
\newtheorem{proposition}[theorem]{Proposition}
\newtheorem{lemma}[theorem]{Lemma}
 \newtheorem{conjecture}[theorem]{Conjecture}

\numberwithin{equation}{section}

\theoremstyle{definition}

 \newtheorem*{problem}{Open problem}

\newenvironment{example}
  {\pushQED{\qed}\examplex}
  {\popQED\endexamplex}

\theoremstyle{remark}
\newtheorem{remark}[theorem]{Remark}
\newtheorem{remarks}[theorem]{Remarks}
\newtheorem*{remark*}{Remark}

\sloppy \allowdisplaybreaks

\newcommand{\1}[1]{{\mathbbm{1}\mkern -1.5mu}{\{#1\}}}


\newcommand{\R}{{\mathbb R}}

\newcommand{\Z}{{\mathbb Z}}
\newcommand{\N}{{\mathbb N}}

\newcommand{\ZP}{{\mathbb Z}_+}
\newcommand{\RP}{{\mathbb R}_+}

\DeclareMathOperator{\Exp}{\mathbb{E}}
\let\Pr\relax
\DeclareMathOperator{\Pr}{\mathbb{P}}

\DeclareMathOperator{\sign}{sgn}

\newcommand{\eps}{\varepsilon}

\newcommand{\ud}{{\mathrm d}}

\newcommand{\cC}{{\mathcal C}}

\newcommand{\cF}{{\mathcal F}}
\newcommand{\cG}{{\mathcal G}}

\newcommand{\hW}{{\widehat W}}
\newcommand{\hT}{{\widehat T}}

\newcommand{\as}{\ \text{a.s.}}

\newcommand{\bigmid}{\; \bigl| \;}
\newcommand{\Bigmid}{\; \Bigl| \;}

\makeatletter
\def\namedlabel#1#2{\begingroup  
    (#2)%
    \def\@currentlabel{#2}%
    \phantomsection\label{#1}\endgroup
}
\makeatother

\newlist{myenumi}{enumerate}{10}
\setlist[myenumi]{leftmargin=0pt, labelindent=\parindent, listparindent=\parindent, labelwidth=0pt, itemindent=!, itemsep=1pt, parsep=4pt}

\newlist{thmenumi}{enumerate}{10}
\setlist[thmenumi]{leftmargin=0pt, labelindent=\parindent, listparindent=\parindent, labelwidth=0pt, itemindent=!}

\title{Superdiffusive planar random walks with polynomial space-time drifts}
\author{Conrado da Costa\footnote{\raggedright Department of Mathematical Sciences, Durham University, Upper Mountjoy, Durham, DH1 3LE, UK. Email:~\href{mailto:conrado.da-costa@durham.ac.uk}{\texttt{conrado.da-costa@durham.ac.uk}}, \href{mailto:mikhail.menshikov@durham.ac.uk}{\texttt{mikhail.menshikov@durham.ac.uk}}, \href{mailto:andrew.wade@durham.ac.uk}{\texttt{andrew.wade@durham.ac.uk}}.}  \and Mikhail Menshikov\footnotemark[1] \and Vadim Shcherbakov\footnote{Department of Mathematics, Royal Holloway, University of London, McCrea Building, Egham, Surrey, TW20 0EX, UK. Email:~\href{mailto:vadim.shcherbakov@rhul.ac.uk}{\texttt{vadim.shcherbakov@rhul.ac.uk}}.} \and Andrew Wade\footnotemark[1]}

\begin{document}

\maketitle

\begin{abstract}
We quantify superdiffusive transience for a two-dimensional random walk
in which the vertical coordinate is a martingale and the horizontal coordinate has a positive drift that is a polynomial function of the individual coordinates and of the present time. We describe how the model was motivated through an heuristic connection to a self-interacting, planar random walk which interacts with its own centre of mass via an excluded-volume mechanism, and is conjectured to be superdiffusive with a scale exponent~$3/4$. The self-interacting process originated in discussions with Francis Comets.
\end{abstract}

\medskip

\noindent
{\em Key words:}  Random walk; self-interaction; excluded volume; Flory exponent; anomalous diffusion.

\medskip

\noindent
{\em AMS Subject Classification:} 60J10 (Primary) 60G50, 60K50, 60F05, 60F15 (Secondary)

\section{Introduction}
\label{sec:intro}

To motivate the model that we formulate and study,
we first describe a planar, self-interacting random walk. 
Our random walk will be a process $W = (W_n, n \in \ZP)$ with $W_n \in \R^2$, indexed by discrete time $n \in \ZP: = \{0,1,2,\ldots\}$, 
started from $W_0 =0$,
whose increment law is determined by the current location $W_n$ of the walk and the current \emph{centre of mass} of the previous trajectory, $G_n \in \R^2$, defined by $G_0 := 0$ and
\begin{equation}
    \label{eq:G-def}
 G_n := \frac{1}{n}
\sum_{k=1}^n  W_k , \text{ for } n \in \N := \{1,2,3,\ldots\}. \end{equation}
At time~$n$, given the locations $G_n$ and $W_n$ in $\R^2$,
we take the new location $W_{n+1}$
to be uniformly distributed on the arc of the unit-radius circle, centred at~$W_n$,
excluding the triangle with vertices $0, G_n, W_n$: see Figure~\ref{fig:flory-increment}
for a schematic and simulation, and Section~\ref{sec:barycentre} below for a more formal description.

\begin{figure}[!h]
\begin{center}
\scalebox{0.95}{\begin{tikzpicture}[domain=0:10, scale = 1.2,decoration={
    markings,
    mark=at position 0.6 with {\arrow[scale=2]{>}}}]
\draw ({5+1.0*cos(180)},{0+1.0*sin(180)}) arc (180:153:1.0);
\draw[double] ({5+0.8*cos(153)},{0+0.8*sin(153)}) arc (153:{207-387}:0.8);
\filldraw (0,0) circle (2pt);
\filldraw (3,1) circle (2pt);
\filldraw (5,0) circle (2pt);
\draw (0,0) -- (5,0);
\draw (3,1) -- (5,0);
\node at (-0.2,0)       {$0$};
\node at (4.9,-0.3)       {$W_{n}$};
\node at (3,1.3)       {$G_{n}$};
\node at (3.5,0.3)       {$2 \beta_{n}$};
\node at (5.9,1.3)       {$W_{n+1}$};
\draw[->,
line join=round,
decorate, decoration={
    zigzag,
    segment length=4,
    amplitude=.9,post=lineto,
    post length=2pt
}] (5.8,1.1) -- (5.5,0.76);
\end{tikzpicture}}\hfill 
\scalebox{-1}[1]{\includegraphics[angle=85,width=0.48\textwidth]{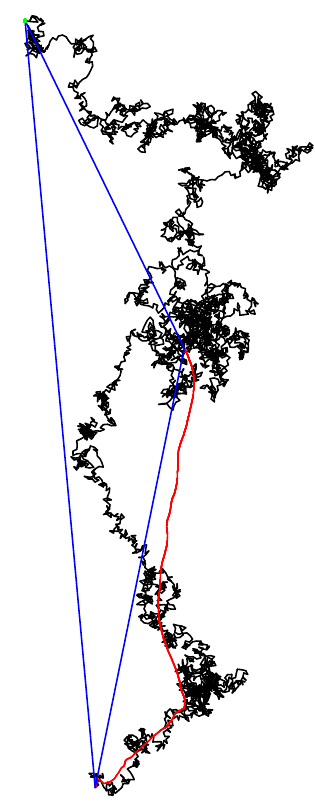}}
\end{center}
\caption{\label{fig:flory-increment} 
\emph{Left:}
The angle at vertex $W_n$ of the triangle with vertices $0, G_n, W_n$
is denoted by~$2 \beta_n$. Given $G_n, W_n$,
the distribution of~$W_{n+1}$ is uniform on the unit disc centred at~$W_n$
but excluding the arc of angle $2\beta_n$ as indicated. \emph{Right:}
A simulated trajectory (not to scale with the unit disk on the left), 
plus the centre of mass trajectory (in red) and
the excluded triangle for the next step (in blue).}
\end{figure}

Rigorous analysis of this model remains an open problem,
but both simulation evidence (see Figure~\ref{fig:flory-sims}), and an heuristic argument (see Figure~\ref{fig:flory-heuristic} and surrounding discussion)
that links the model to a special case of Theorem~\ref{thm:main-thm} below,
suggest that, a.s.,
\begin{equation}
\label{eq:flory-scaling}
    \lim_{n \to \infty} \frac{\log \| W_n \|}{\log n} = \frac{3}{4} , \text{ and }
    \lim_{n \to \infty} \frac{W_n}{\| W_n \|} = \Theta ,
\end{equation}
where $\Theta$ is a uniform random unit vector (a limiting direction for the walk).
A scale exponent of $3/4 > 1/2$
evidences \emph{anomalous diffusion}~\cite{mjcb,oflv}.
Furthermore, the specific exponent~$3/4$
is famous in the connection of planar self-interacting walks as the \emph{Flory exponent}.
The end-to-end distance of a uniformly-sampled $n$-step self-avoiding random walk (SAW) 
on the planar lattice $\Z^2$ is expected to scale like $n^{3/4}$,
a prediction that goes back to Flory's work in the physical chemistry of
polymers in solution~\cite{flory,flory2};
see e.g.~\cite{bn,hughes,hk,lsw,ms} for further background on the celebrated SAW model.
  Flory's prediction has since been reproduced
  by several separate  arguments
  (see Chapter 2 of \cite{ms}, Chapters 7--11 of
  \cite{rg}, Chapter 15 of \cite{kleinert}, and  \cite{alkhimov}, for example)
  and is now interpreted in the context of the famous conjecture that
	the scaling limit of SAW is Schramm--Loewner 
 evolution with parameter $8/3$~\cite{lsw}.

Unlike the $(W_n, G_n)$ process, which is Markov, SAW is not a natural stochastic process, 
and it seems unlikely that there is a deep connection underlying the appearance of the $3/4$ exponent in the (conjectural) behaviour of the two models;
we do not expect an SLE limit, for instance, and, indeed, the heuristic in Section~\ref{sec:barycentre} below
can be pursued to suggest a different scaling limit. 
Nevertheless, the $(W_n, G_n)$ process was inspired by
discussions with Francis Comets in the context of self-interacting random-walk
models of polymer chains, and so the appearance of the Flory exponent is attractive.
   The basic idea is that
   the steps of the
  walk represent similar segments (such as monomers)
  in the molecule.
The inspiration for the self-interaction mechanism for the $(W_n, G_n)$ process is the \emph{excluded volume effect}~\cite{bn,flory2} from polymer physics,
which says that no two monomers can occupy the same physical space,
but we impose this condition in a coarse sense, mediated by the barycentre (centre of mass) of the previous trajectory.
One may also view the $(W_n, G_n)$ 
process in the context of random walks interacting with their past
occupation measures, which is a topic of ongoing interest;
  models of a similar flavour, some of which having been studied by Francis Comets, include~\cite{abv,BFV10,cmw,cmvw}. 

\begin{figure}[!h]
\begin{center}
\scalebox{0.98}{\begin{tikzpicture}[domain=0:10, scale = 1.2,decoration={
    markings,
    mark=at position 0.6 with {\arrow[scale=2]{>}}}]
\draw[double] ({5+0.8*cos(153)},{0+0.8*sin(153)}) arc (153:{900}:0.8);
\filldraw (0,0) circle (2pt);
\filldraw (3,0) circle (2pt);
\filldraw (5,0) circle (2pt);
\draw (0,0) -- (5,0);
\node at (-0.2,0)       {$0$};
\node at (5.1,-0.3)       {$W_n$};
\node at (3,0.3)       {$G_n$};
\node at (3,-1)       {\phantom{x}};
\end{tikzpicture}}
\quad
\scalebox{0.98}{\begin{tikzpicture}[domain=0:10, scale = 1.2,decoration={
    markings,
    mark=at position 0.6 with {\arrow[scale=2]{>}}}]
\draw ({5+1.0*cos(169)},{-1+1.0*sin(169)}) arc (169:156:1.0);
\draw[double] ({5+0.8*cos(156)},{-1+0.8*sin(156)}) arc (156:{207-398}:0.8);
\filldraw (0,0) circle (2pt);
\filldraw (3,-0.1) circle (2pt);
\filldraw (5,-1) circle (2pt);
\draw (0,0) -- (5,-1);
\draw (3,-0.1) -- (5,-1);
\node at (-0.2,0)       {$0$};
\node at (4.9,-1.3)       {$W_{(1+\eps)n}$};
\node at (3,0.3)       {$G_{(1+\eps)n}$};
\end{tikzpicture}}
\end{center}
\caption{\label{fig:flory-heuristic} Suppose that $\| W_n \| \approx n^\chi$ for some $\chi > 1/2$. We expect that a typical configuration has $0, G_n$, and $W_n$ roughly aligned at scale~$n^\chi$ (\emph{left panel}). At this point, the drift of $W_n$ is close to zero, and over a time period of about $\eps n$, $W$ will move roughly diffusively, while $G_n$ will move only a little. Hence a typical picture at time $(1+\eps) n$ would have $\beta_{(1+\eps)n} \approx n^{(1/2)-\chi}$ (\emph{right panel}).}
\end{figure}
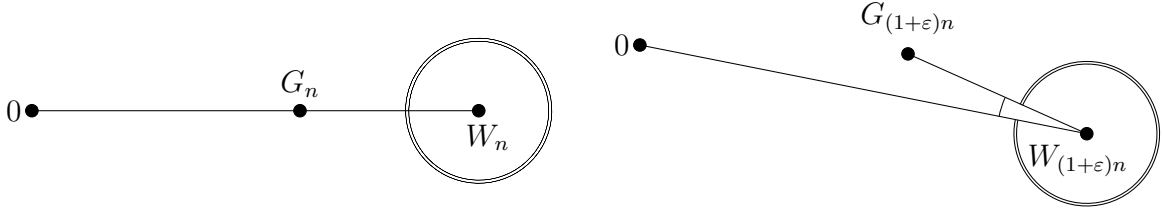

We sketch an heuristic argument 
for the scaling in~\eqref{eq:flory-scaling};
we will in Section~\ref{sec:barycentre}
give a slightly more developed version of the heuristic, which links
the $(W_n, G_n)$
process to the model that we study in the bulk of this paper (see Example~\ref{ex:34} below).
We expect
that the typical configuration is for
$0$, $G_n$, and $W_n$ to lie on roughly the same line,
in that order (see Figure~\ref{fig:flory-heuristic}, left panel). 
Suppose that $\|W_n\| \approx n^\chi$
 for some $\chi$ and that $G_n$ is about half way  between
 $0$ and $W_n$ (under a strong version of~\eqref{eq:flory-scaling}, $G_n$ would typically lie about $4/7$ of the way from $0$ to $W_n$).
When $0, G_n$, and $W_n$ are in alignment, 
the mean drift of~$W_n$ is zero, because the increment is symmetric. 
Thus on a moderate time-scale, one expects the location of $W_n$
to have wandered distance of order $n^{1/2}$ in the perpendicular
 direction, while $G_n$ will have moved much less because of its more stable dynamics. 
 At that point, $G_n$ and $W_n$ are still on scale about $n^\chi$, but now the angle, $\beta_n$,
 at $W_n$ in the triangle formed by $0, W_n, G_n$,
 will be of size about $n^{(1/2)-\chi}$ (see Figure~\ref{fig:flory-heuristic}, right panel). The size of the angle $\beta_n$
 is also the order of the magnitude of the
 drift of the walk. A consistency argument demands that $\sum_{m=1}^n m^{(1/2)-\chi} \approx n^\chi$,
 i.e., $(3/2) - \chi = \chi$,  which gives $\chi = 3/4$.

\begin{figure}[!h]
\begin{center}
\includegraphics[width=0.4\textwidth]{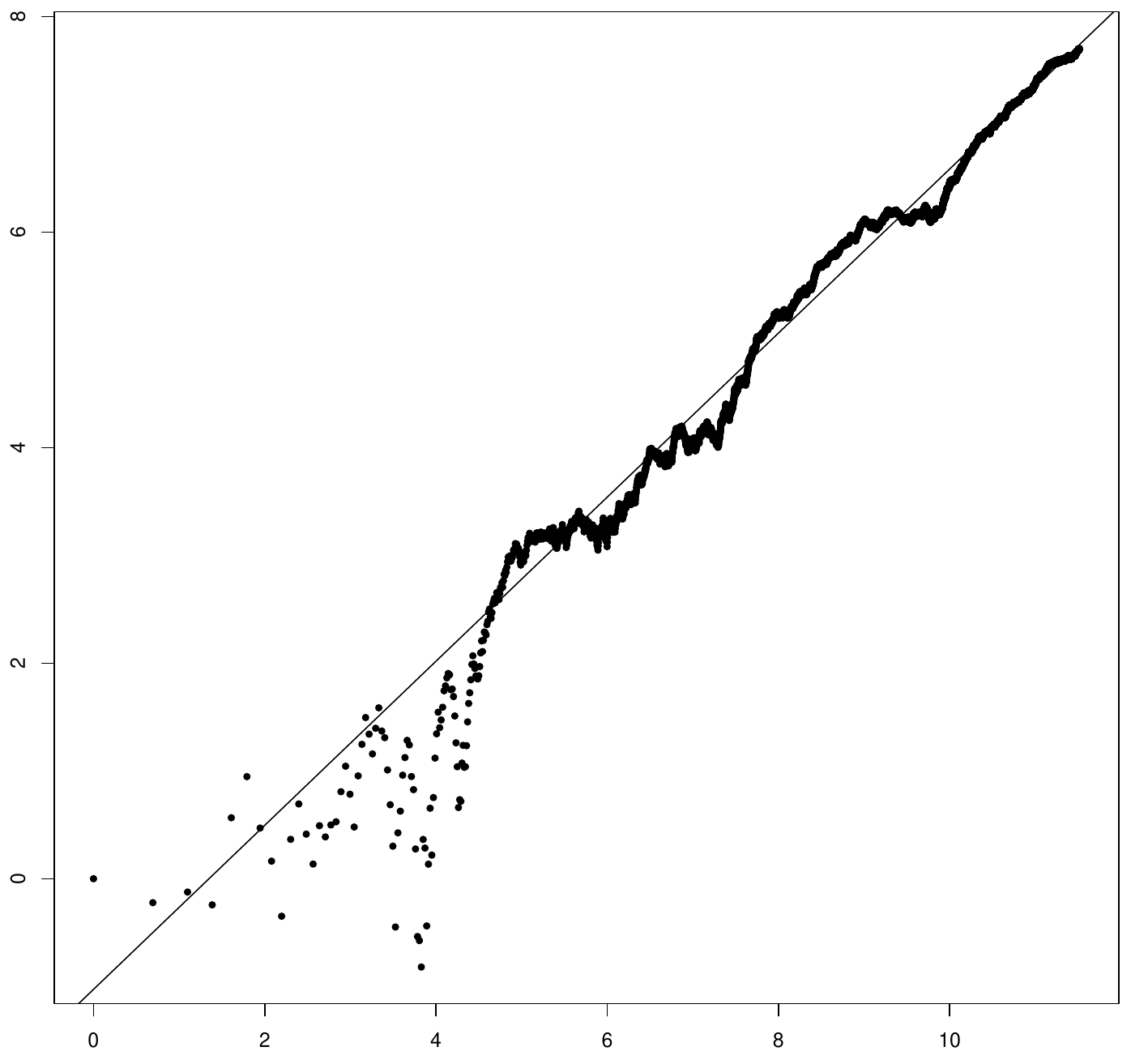}
\quad
\includegraphics[width=0.4\textwidth]{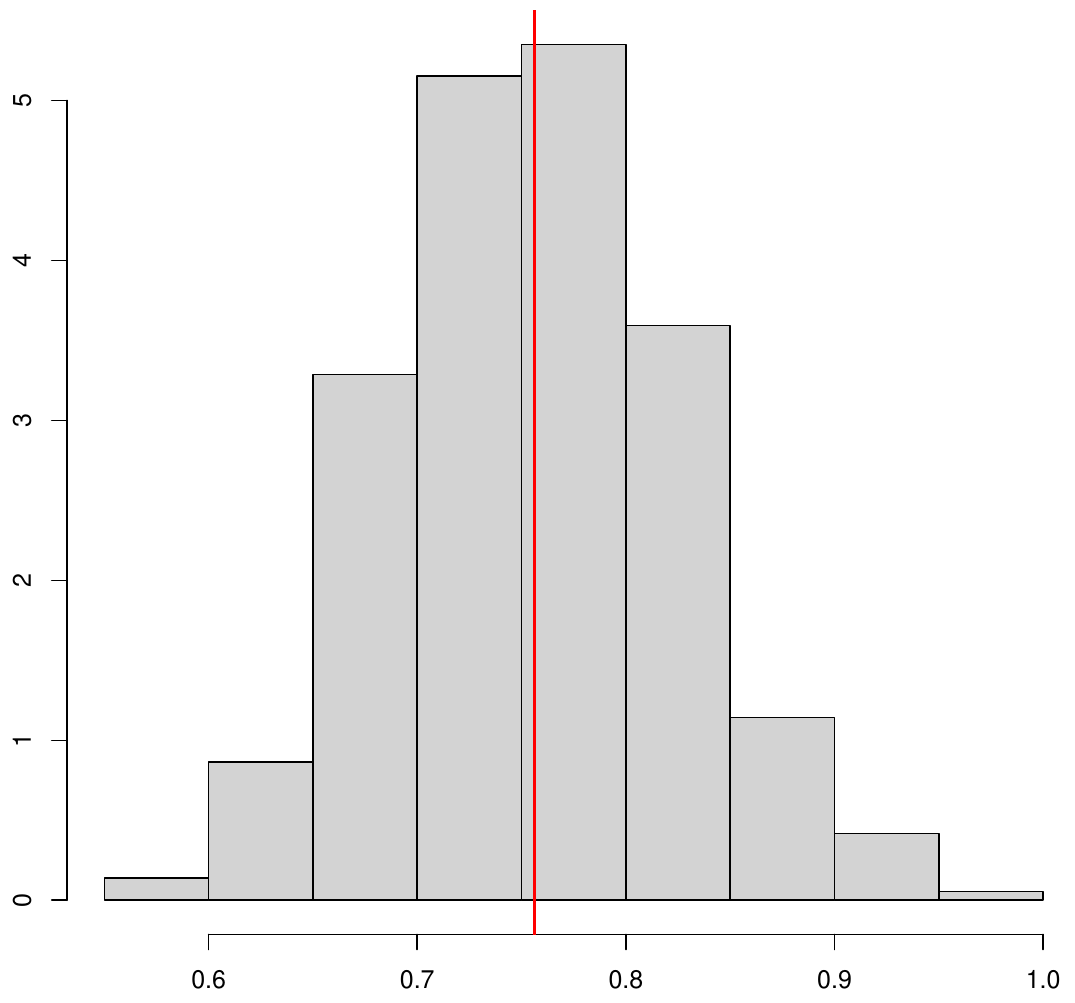}
\end{center}
\caption{\label{fig:flory-sims} \emph{Left:} Plot of $\log \| W_n \|$
against $\log n$ for a single simulated trajectory of the $(W_n,G_n)$ model up to $n=10^5$ steps.
Indicated is also the least-squares linear regression line, which has slope $\approx 0.76$.
\emph{Right:} Histogram of the linear regression slope generated by~$10^3$ samples of
trajectories of length $10^5$ steps; sample mean value $\approx 0.756$ is indicated.
}
\end{figure}

In the next section we describe the model that will be the main focus of the paper,
taking inspiration from the above discussion, and present our main results.
In Section~\ref{sec:barycentre} we will return briefly to the self-interacting random walk described above, and explain in more detail the connection 
to the model developed in the bulk of the paper.

\section{Model and main results}
\label{sec:model}

Let $(\Omega, \cF, \Pr)$ be a probability space
supporting a discrete-time stochastic 
process $Z:= (Z_n, n \in \ZP)$ taking values in $\RP \times \R$ and 
adapted to a filtration
$(\cF_n, n\in\ZP)$;
here and throughout the paper, we denote~$\ZP:=\{0,1,2,\ldots\} = \{ 0 \} \cup  \N$.
Represent $Z_n$, $n \in \ZP$, in Cartesian coordinates by $Z_n = (X_n, Y_n)$,
so that $X_n \in \RP$ and $Y_n  \in \R$. 

For parameters  $\alpha \in (-1,\infty)$, $\beta, \gamma\in \RP$ and $\rho \in (0,\infty)$,
for every $n \in \ZP$
define $\kappa_n : \RP \times \R   \to \RP$ by
\begin{equation}
    \label{eq:kappa-def}
    \kappa_n (z) := \kappa_n ( x, y) := 
        \displaystyle\frac{ \rho |y|^\gamma}{(1+x)^\alpha (1+n)^\beta} ,
        \text{ for all } z := (x,y) \in \RP \times \R.
    \end{equation}
    
We will suppose the process $Z$ is such that, given $\cF_n$, the increment
$Z_{n+1} -Z_n$ is a stochastic perturbation of $\kappa_n (Z_n)$.
More precisely, define the stochastic innovations $\xi = ( \xi_n, n \in \N)$,
with coordinates $\xi_n = (\xi_n^{(1)} , \xi_n^{(2)} ) \in \R^2$, 
through the equalities,
\begin{align}
    \label{eq:X-increment}
    X_{n+1} - X_n & = \kappa_n (Z_n) + \xi_{n+1}^{(1)} , \\
    \label{eq:Y-increment}
    Y_{n+1} - Y_n & = \xi_{n+1}^{(2)} , \text{ for all } n \in \ZP,
\end{align}
so that $\xi_n$ is $\cF_n$-measurable. Then we will
characterize the dynamics of $Z$ through~\eqref{eq:X-increment}--\eqref{eq:Y-increment}
and declaring that $\xi$ satisfies,
for constants $B \in \RP$ and $\delta \in (0,1)$,
the following assumptions.
\begin{description}
\item
[\namedlabel{ass:bounded-jumps}{B}]
\emph{Bounded innovations.}
For all $n \in \ZP$,
\begin{align}
\label{eq:xi-jump-bound}
\Pr ( \| \xi_{n+1} \| \leq B  ) & = 1 . \end{align}
\item
[\namedlabel{ass:martingale}{M}]
\emph{(Sub)martingale innovations.}
For all $n \in \ZP$,
\begin{equation}
  \label{eq:xi-martingale}
0 \leq \Exp ( \xi^{(1)}_{n+1} \mid \cF_n )  \leq B \1 { X_n \leq B }, ~\text{and}~
\Exp ( \xi^{(2)}_{n+1} \mid \cF_n ) = 0.
\end{equation}
\item 
[\namedlabel{ass:ellipticity}{E}]
\emph{Uniform ellipticity of vertical innovations.}
For all $n \in \ZP$, 
\begin{align}
\label{eq:xi-ellipticity}
\Pr (| \xi^{(2)}_{n+1}| \geq \delta \mid \cF_n )  \geq \delta . 
\end{align}
\end{description}
We suppose that the initial state $Z_0 = (X_0, Y_0) \in \RP \times \R$
is arbitrary (but fixed). 
A  consequence of~\eqref{eq:X-increment}--\eqref{eq:xi-jump-bound} is
\begin{equation}
    \label{eq:XY-jump-bound}
    \Pr ( | X_{n+1} - X_n - \kappa_n (Z_n) | \leq B ) = \Pr ( | Y_{n+1} - Y_n | \leq B ) = 1.
\end{equation}
From~\eqref{eq:X-increment}, \eqref{eq:Y-increment}, and~\eqref{eq:xi-martingale} it follows that
 \begin{align}
\label{eq:X-drift-bound} 
0 \leq \kappa_n (Z_n ) \leq \Exp ( X_{n+1} - X_n \mid \cF_n ) & \leq \kappa_n (Z_n) + B \1 {X_n \leq B }  ; \\
\label{eq:Y-drift} 
\Exp ( Y_{n+1} - Y_n \mid \cF_n ) & = 0;
\end{align}
in particular, $Y  = (Y_n, n \in \ZP)$ is a martingale and $X  = (X_n, n \in \ZP)$ is a submartingale.
Moreover, from~\eqref{eq:Y-increment} and~\eqref{eq:xi-ellipticity} we have
\begin{equation}
\label{eq:Y-ellipticity}
 \Pr (| Y_{n+1} -Y_n| \geq \delta \mid \cF_n )  \geq \delta .
 \end{equation}
Note that we do not necessarily suppose that $Z$ is a Markov process,
although in many natural examples that will be the case (and, for $\beta \neq 0$,
the Markov process will be inhomogeneous in time). Note also that
we make no independence assumption on the components of~$\xi$.
We give some examples;
the first, Example~\ref{ex:1},
gives a process  that lives on $\ZP \times \Z$;
our assumptions~\eqref{ass:bounded-jumps}, \eqref{ass:martingale}, and~\eqref{ass:ellipticity}
are broad enough that a whole host of similar examples, on $\ZP \times \Z$ or on $\RP \times \R$,
 can be constructed.

 \begin{example}
 \label{ex:1}
Take $Z_0 \in \ZP \times \Z$. For $x \in \R$, recall that $\lceil x \rceil$ is the
unique
integer for which $\lceil x \rceil -1 < x \leq \lceil x \rceil$.
Define the $\cF_n$-measurable random variable $\varphi_n :=  \lceil  \kappa_n (Z_n) \rceil -  \kappa_n (Z_n) \in [0,1)$.
Suppose that 
\begin{align*}
\Pr ( \xi_{n+1}^{(1)} = \varphi_n, \, \xi_{n+1}^{(2)} = 0 \mid \cF_n ) & = \frac{1 -\varphi_n}{2} + \frac{\varphi_n}{2} \1 { X_n = 0},  \\
\Pr ( \xi_{n+1}^{(1)} = \varphi_n -1, \, \xi_{n+1}^{(2)} = 0 \mid \cF_n ) & = \frac{\varphi_n}{2} \1 { X_n \geq 1} ,  \\
\Pr (  \xi_{n+1}^{(1)} =0, \, \xi_{n+1}^{(2)} = +1 \mid \cF_n ) & = \Pr ( \xi_{n+1}^{(1)} =0, \,  \xi_{n+1}^{(2)} = -1 \mid \cF_n ) = \frac{1}{4} ,
\end{align*}
for all $n \in \ZP$, these being the only possible transitions. 
Then~\eqref{eq:X-increment}
shows that $Z_n \in \ZP \times \Z$ for all $n \in \ZP$,
and 
the assumptions~\eqref{ass:bounded-jumps}, \eqref{ass:martingale}, and~\eqref{ass:ellipticity} 
are satisfied, with constants $B=1$ and $\delta = 1/2$.
\end{example}

\begin{example}
\label{ex:mv1}
    For $\gamma =0$, 
    our model is similar to that studied by
    Menshikov \& Volkov~\cite{mv2}.
    Menshikov \& Volkov considered time-inhomogeneous Markov chains 
    on $\RP$, satisfying certain
   regularity conditions, including bounded increments,
    for which
    \begin{equation}
    \label{eq:mv}
    \Exp ( X_{n+1} - X_n \mid \cF_n ) \geq \frac{\rho}{(1+X_n)^\alpha (1+n)^\beta} , 
    \end{equation}
    where $\rho >0$, $\beta \geq 0$, and $\alpha + \beta > 0$ (we have reparametrized the
formulation from that of~\cite{mv2} for compatibility with ours).
Theorem~1 of~\cite{mv2} established transience, i.e., $X_n \to \infty$, a.s.,
under the conditions 
\begin{equation}
    \label{eq:mv-regime}
    0 \leq \beta < 1, \text{ and } -\beta < \alpha < 1-2\beta.
\end{equation}
Subsequently, Gradinaru \& Offret~\cite{go}
made a thorough investigation of a time-inhomogeneous diffusion
on $\RP$ analogous to the Markov chain of~\cite{mv2}, proving sharp results on
asymptotic behaviour, using methods specific to the continuum setting. 
A simple consequence of our main theorem below is a strengthening of
Theorem~1 of~\cite{mv2},
quantifying the superdiffusive transience of~$X_n$: see Example~\ref{ex:mv2}
below.
\end{example}

Define the characteristic exponent
\begin{equation}
    \label{eq:chi-def}
    \chi := \chi (\alpha, \beta, \gamma ) := \frac{2 + \gamma-2\beta}{2+2\alpha} .
\end{equation}

Write  $\alpha^+ := \max (0,\alpha)$. The following is our main result.

\begin{theorem}
\label{thm:main-thm}
Suppose that~\eqref{ass:bounded-jumps}, \eqref{ass:martingale}, and~\eqref{ass:ellipticity} hold.
Suppose that $\alpha \in (-1,\infty)$ and $\gamma, \beta \in \RP$ are such that
\begin{equation}
    \label{eq:3-parameters-condition-superdiffusive}
   1+ \gamma > \alpha^+ + 2 \beta .
\end{equation}
    Then, for $\chi = \chi  (\alpha, \beta, \gamma )$ defined at~\eqref{eq:chi-def}, 
  \begin{equation}
    \label{eq:log-limit}
     \lim_{n\to \infty} \frac{\log X_n}{\log n} = \chi  , \as \end{equation}
\end{theorem}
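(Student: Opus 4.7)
The plan is to prove matching a.s.\ bounds $\limsup_n \log X_n/\log n \leq \chi$ and $\liminf_n \log X_n/\log n \geq \chi$. The key heuristic is that $Y$ is a martingale with bounded, uniformly elliptic increments, so $|Y_n|$ is typically of order $\sqrt{n}$; then $\kappa_n(Z_n) \approx n^{\gamma/2 - \beta}/X_n^\alpha$, and the consistency $X_n \approx \sum_{k\leq n} k^{\gamma/2-\beta}/X_k^\alpha$ produces exactly the exponent $\chi$ of~\eqref{eq:chi-def}.

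The first step is a suite of one-sided estimates for $Y$. Assumptions~\eqref{ass:bounded-jumps} and~\eqref{ass:ellipticity} give $\delta^3 \leq \Exp[(Y_{n+1}-Y_n)^2 \mid \cF_n] \leq B^2$, so the predictable bracket $\langle Y\rangle_n$ is of linear order. A martingale law of the iterated logarithm then yields $|Y_n| \leq C\sqrt{n\log\log n}$ a.s., while a quantitative martingale CLT (or Burkholder-type moment estimates) gives $\Exp[|Y_n|^\gamma] \asymp n^{\gamma/2}$. Combined with a Borel--Cantelli argument on a dyadic subsequence, these should yield the two-sided envelope
\begin{equation*}
n^{1+\gamma/2-\beta-o(1)} \;\leq\; \sum_{k=1}^n \frac{|Y_k|^\gamma}{(1+k)^\beta} \;\leq\; n^{1+\gamma/2-\beta+o(1)}, \qquad \text{a.s.,}
\end{equation*}
which is what the log-limit requires.

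Next, I would set up a Lyapunov comparison for $X$ via $V_n := (1+X_n)^{1+\alpha}$. From~\eqref{eq:XY-jump-bound}, $|X_{n+1}-X_n| \leq \kappa_n(Z_n)+B$, and in the scaling regime one has $\kappa_n(Z_n) = O(X_n/n) = o(X_n)$ (confirmed by a bootstrap from the consistency relation), so a second-order Taylor expansion yields
\begin{equation*}
\Exp[V_{n+1}-V_n \mid \cF_n] = (1+\alpha)(1+X_n)^\alpha \Exp[X_{n+1}-X_n\mid\cF_n] + O\bigl((1+X_n)^{\alpha-1}(\kappa_n(Z_n)+B)^2\bigr).
\end{equation*}
By~\eqref{eq:X-drift-bound}, the leading term equals $(1+\alpha)\rho|Y_n|^\gamma/(1+n)^\beta$ up to a contribution supported on $\{X_n \leq B\}$, while the error term is an $n^{-1}$ factor smaller. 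Telescoping, and using Azuma/Freedman concentration to dominate the martingale part of $V_n$ by its predictable compensator, should give $V_n = n^{1+\gamma/2-\beta+o(1)}$ a.s., which, on extracting the $(1+\alpha)$-th root, is~\eqref{eq:log-limit}.

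The main obstacle is the lower bound in the auxiliary $Y$-estimate. Without independence of the two coordinates of $\xi$, one cannot a priori rule out that $|Y_k|$ becomes anomalously small over long intervals in a way correlated with~$X$; the argument must therefore use only the one-dimensional martingale structure of $Y$, for instance through a hitting-time or occupation-time analysis after a Dambis--Dubins--Schwarz-type discrete time change, to show that $|Y_k| \geq c\sqrt{k}$ on a subset of indices of positive upper density for $\Pr$-a.e.\ sample path, uniformly in the history of~$X$. A secondary subtlety is that~\eqref{eq:3-parameters-condition-superdiffusive} permits $\chi > 1$ (e.g.\ when $\alpha > 0$ and $\gamma > 2(\alpha+\beta)$), making $X$-increments unbounded in~$n$; the Taylor and concentration estimates remain valid because $\kappa_n(Z_n)/X_n = O(n^{-1})$ along the trajectory, but this has to be folded into the bootstrap.
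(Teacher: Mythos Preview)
Your Lyapunov choice $V_n = (1+X_n)^{1+\alpha}$ is the right heuristic: the first-order Taylor term does collapse to $(1+\alpha)\rho|Y_n|^\gamma/(1+n)^\beta$, so after telescoping the compensator is precisely the $Y$-path-sum whose logarithmic order you identify. The gap is in the second-order control. The claim $\kappa_n(Z_n) = O(X_n/n)$ is equivalent to $|Y_n|^\gamma \leq C n^{\beta-1}(1+X_n)^{1+\alpha}$, which is essentially the conclusion $X_n \gtrsim n^\chi$ you are trying to prove; no bootstrap can start from it. The only non-circular a priori bound available is the \emph{confinement} estimate $\kappa_n(Z_n) \leq C(1+X_n)$ (Proposition~\ref{prop:confinement} in the paper, proved by a deterministic one-step argument), which your sketch does not mention. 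With only that, the Taylor remainder $(1+X_n)^{\alpha-1}(\kappa_n(Z_n)+B)^2$ is of order $V_n$, not $V_n/n^2$, so its telescoped sum swamps the main term. The same circularity infects the Azuma/Freedman step: the predictable quadratic variation of the martingale part of $V$ involves $\sum_k (1+X_k)^{2\alpha}$, and bounding this by $n^{1+2\alpha\chi+o(1)}$ already presupposes the upper bound on~$X$.

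The paper sidesteps the circularity by separating the two directions. For the \emph{upper} bound it abandons pathwise Lyapunov control and works with moments $f_n := \Exp[X_n^\nu]$ for large~$\nu$: H\"older's inequality decouples $X$ from $Y$ to produce a closed scalar recursion $f_{n+1}-f_n \leq C n^{\gamma/2-\beta} f_n^{(\nu-\alpha-1)/\nu} + C f_n^{(\nu-2)/\nu} + C$ (Lemma~\ref{lem:moment-iteration}), which is bootstrapped from the crude exponential bound $X_n \leq \Lambda_0^n$ (itself a consequence of confinement) down to the sharp exponent $\nu\chi$ via the contraction of Lemma~\ref{lem:kappa}; Doob's inequality on the submartingale $X^2$ then converts this to the a.s.\ $\limsup$. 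Only \emph{after} the upper bound is established does the paper attack the $\liminf$, via the decomposition $X_n = X_0 + A_n + \Xi_n$ with $A_n = \sum_m \kappa_m(Z_m)$ and $|\Xi_n| = O(n^{1/2+\eps})$: when $\alpha \geq 0$ the already-proved bound $X_m \leq m^{\chi+\eps}$ is fed back into $\kappa_m$ to bound $A_n$ below, and when $\alpha \in (-1,0)$ a separate monotone iteration is run. Your attempt to obtain both directions simultaneously from a single pathwise $V_n$ is what generates the circularity; the paper's order of operations---confinement, then moment upper bound, then pathwise lower bound using the upper bound---is essential, not cosmetic.
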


\begin{remarks}
\phantomsection
\label{rems:main-thm}
\begin{myenumi}[label=(\roman*)]
\item\label{rems:main-thm-i} 
    Note that~\eqref{eq:3-parameters-condition-superdiffusive}
    implies that $2 + \gamma - 2\beta > 1 +\alpha^+ \geq 1 +\alpha$,
    so that $\chi$ given by~\eqref{eq:chi-def}
    satisfies~$\chi > 1/2$. Hence Theorem~\ref{thm:main-thm} shows that
    $X$ exhibits \emph{super-diffusive transience}.    It is  possible that
    $\chi > 1$, in which case the transience is even \emph{super-ballistic},
    since the dynamics~\eqref{eq:X-increment} can permit increments of arbitrary size for the process.
\item\label{rems:main-thm-ii} 
Our main interest in the present paper is identifying the scaling exponent~$\chi$.
It is natural also to investigate conditions under which there
holds weak convergence~of $n^{-\chi} X_n$.
Such convergence would seem to require, at least, weak convergence
of the martingale path-sums studied in Appendix~\ref{sec:one-dimensional-bounds},
which will need further hypotheses on second-moments of the increments of~$Y$. For example,
a natural additional hypothesis is
\begin{equation}
    \label{eq:Y-variance}
  \lim_{n \to \infty}  \Exp ( (Y_{n+1} -Y_n)^2 \mid \cF_n ) = \sigma^2, \text{ in probability},
\end{equation}
which enables one to apply the martingale invariance principle~\cite{mcleish} to show 
convergence of $n^{-1/2} Y_{\lfloor nt \rfloor}$ to Brownian motion, and in this case the candidate
limit for $n^{-\chi} X_n$ would involve an integral functional of Brownian motion. 
Since Example~\ref{ex:1},
and many other natural examples, satisfy~\eqref{eq:Y-variance}, we would expect
 $n^{-\chi} X_n$ to possess a distributional limit in many cases.
 Precise conditions under which an analogous convergence result is established for a diffusion version of the model,
 going beyond the setting of martingale~$Y$, are described in~\cite{bcmw}.
\end{myenumi}
 \end{remarks}

\begin{example}
\label{ex:34}
The self-interacting random walk model that inspired this work
(see Section~\ref{sec:intro} for a description and Section~\ref{sec:barycentre} for more mathematical details)
motivates the parameter choice $\alpha = \gamma =1$, $\beta =0$, so that~\eqref{eq:X-drift-bound} gives
\[ \Exp ( X_{n+1} - X_n \mid \cF_n ) = \frac{ \rho |Y_n|}{1+X_n}, \text{ on } \{ X_n > B \}. \]
In this case,  Theorem~\ref{thm:main-thm}
shows that $\log X_n / \log n \to 3/4$, a.s. The appearance of the famous Flory exponent~$3/4$~\cite{flory,flory2}
was one of the features of this model that captured our attention.
\end{example}

\begin{example}
\label{ex:mv2}
    For $\gamma =0$, 
the conclusion of Theorem~\ref{thm:main-thm} is that, a.s.,
\[ \lim_{n \to \infty} \frac{\log X_n}{\log n} = \frac{1- \beta}{1+\alpha} > \frac{1}{2} ,\]
which establishes a superdiffusive rate of escape for the class of processes studied in~\cite{mv2}
and described in Example~\ref{ex:mv1} above. Note that the condition~\eqref{eq:mv-regime}
from~\cite{mv2} translates to $\frac{1}{2} < \frac{1-\beta}{1+\alpha} < 1$;
unlike our model, the process in~\cite{mv2} is always \emph{sub-ballistic}, since it has bounded increments.  

Gradinaru \& Offret~\cite{go} studied the time-inhomogeneous diffusion 
\begin{equation}
   \label{eq:go}
   \ud X_t = \rho t^{-\beta} X_t^{-\alpha} \ud t + \ud W_t, \end{equation}
where~$W$ is Brownian motion and, again,
we have reparametrized to match our formulation. 
Theorem~4.10(i) of~\cite[p.~200]{go} says that, if $\rho >0$, and $2\beta < 1-\alpha <2$, then
\[ \lim_{t \to \infty} t^{-\frac{1-\beta}{1+\alpha}} X_t 
= c_{\alpha, \beta}, \as, \]
for an explicit $c_{\alpha, \beta} \in (0,\infty)$.
\end{example}

The focus of this paper is the case where $\chi > 1/2$.
For $\chi \leq 1/2$, one no longer expects
superdiffusivity, and it becomes
an interesting question to investigate whether $X$, or indeed $Z$,
is recurrent or not, i.e., is it true that $\liminf_{n \to \infty} X_n < \infty$ a.s., or $\liminf_{n \to \infty} \| Z_n\| < \infty$, a.s. We pose the following.

\begin{problem}
Suppose that $\alpha \in (-1,\infty)$, $\gamma, \beta \in \RP$,
but $1 + \gamma \leq \alpha + 2 \beta$. Classify when $X$ and $Z$ are recurrent
or diffusively transient.
\end{problem}
\begin{remark}
    In the case $\gamma =0$, it was shown in Theorem~2 of~\cite{mv2}
    that a closely related model is recurrent when $\beta \geq 0$
    and $\alpha > \max (-\beta, 1 -2\beta)$; cf.~Remark~\ref{ex:mv1}.
    In the boundary cases where~(i) $\alpha=1$ and $\beta=0$~\cite{lamp}, or (ii)~$\beta = 1 = - \alpha$~\cite{mv2},
    the recurrence/transience classification depends on $\rho$, showing
    that, in general, the
    critical-parameter case is delicate.
\end{remark}

The paper is organized as follows. In Section~\ref{sec:Y-process}
we establish  a.s.~asymptotics for the vertical process~$Y$
and associated additive functionals, and moments bounds; these
quantify the fact that $Y$ is diffusive.
The a.s.~asymptotics in Section~\ref{sec:Y-process} are deduced from
 asymptotics for additive functionals of
one-dimensional martingales that we defer to Appendix~\ref{sec:one-dimensional-bounds}.
In Section~\ref{sec:X-asymptotics} we study the (more difficult) behaviour of the process~$X$,
and prove Theorem~\ref{thm:main-thm}.
Then in Section~\ref{sec:barycentre} we return to the self-interacting random walk
model described  in Section~\ref{sec:intro} to explain, heuristically,
the relationship between that model and the phenomena exhibited in Theorem~\ref{thm:main-thm},
and to formulate a conjecture about its behaviour.

In what follows, we will denote by $C$ various positive constants which may differ
on each appearance, and whose exact values are immaterial for the  proofs.

\section{Asymptotics for the vertical process}
\label{sec:Y-process}

The following proposition 
gives  the long-term behaviour of 
the component $Y$ of the process $Z=(X, Y)$.
    The proof
relies on Theorem~\ref{thm:S-bounds}
in~\S\ref{sec:one-dimensional-bounds},
which gives a slightly more general result for processes on $\RP$.

\begin{proposition}
\label{prop:|Y|-sum}
Suppose that the process~$Y$ satisfies~\eqref{eq:XY-jump-bound}, \eqref{eq:Y-drift}, and~\eqref{eq:Y-ellipticity}.
Then 
\begin{equation}
\label{eq:|Y|-limit-diffusion} \lim_{n \to \infty} \frac{ \log \max_{0\leq m \leq n} |Y_m|}{\log n } =   \frac{1}{2} , \as \end{equation}
Moreover, for any $\beta \in \R$ and $\gamma \in \RP$ with $2\beta \leq 2 +\gamma$,
\begin{equation}
\label{eq:|Y|-limit} \lim_{n \to \infty} \frac{ \log \sum_{m=1}^n m^{-\beta} |Y_m|^\gamma}{\log n } = 1 + \frac{\gamma}{2} -\beta, \as \end{equation}
\end{proposition}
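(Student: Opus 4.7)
The overall strategy is to reduce both statements to the abstract one-dimensional martingale result Theorem~\ref{thm:S-bounds} in Appendix~\ref{sec:one-dimensional-bounds}. The hypotheses~\eqref{eq:XY-jump-bound}, \eqref{eq:Y-drift} and~\eqref{eq:Y-ellipticity} combine to say that $Y$ is a martingale with increments bounded by $B$, zero mean drift, and lower-bounded conditional second moment $\Exp[(Y_{n+1}-Y_n)^2 \mid \cF_n] \ge \delta^3$, so the predictable quadratic variation $\langle Y\rangle_n$ grows at least linearly in $n$. This fits the setting of Theorem~\ref{thm:S-bounds}, applied to the non-negative process $|Y|$ on $\RP$, whose excursion structure is regulated from below by~\eqref{eq:Y-ellipticity}.

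For~\eqref{eq:|Y|-limit-diffusion}, the upper bound will follow from Azuma--Hoeffding, which yields $\Pr(\max_{m \le n}|Y_m| \ge n^{1/2+\eps}) \le 2\exp(-c n^{2\eps})$ for each $\eps>0$, together with Borel--Cantelli along the subsequence $n_k = 2^k$ and monotonicity of the maximum. The matching lower bound comes from a martingale law-of-iterated-logarithm argument: since $\langle Y\rangle_n \ge c n$ and $Y$ has bounded increments, $\limsup_{n\to\infty} |Y_n|/\sqrt{n \log \log n} > 0$ almost surely, and so $\max_{m \le n}|Y_m| \ge n^{1/2-\eps}$ for all sufficiently large $n$.

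For~\eqref{eq:|Y|-limit}, the upper bound uses the a.s.~estimate $|Y_m| \le m^{1/2+\eps}$ from the first part: the sum is then dominated by $C\sum_{m=1}^n m^{-\beta + \gamma/2 + \gamma\eps}$, which is of polynomial order $n^{1+\gamma/2-\beta+\gamma\eps}$ when $2\beta < 2+\gamma$ (and grows only logarithmically in the boundary case $2\beta = 2+\gamma$); taking logs, dividing by $\log n$, and letting $\eps \downarrow 0$ produces the correct exponent from above. The main obstacle is the matching lower bound, as one cannot simply substitute a pointwise lower bound for $|Y_m|$: the martingale $Y$ will occasionally be very close to zero. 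The correct substitute, which is precisely the content of Theorem~\ref{thm:S-bounds}, is to show that the sum receives a contribution of the right order from times $m$ at which $|Y_m|$ is comparable to $\sqrt{m}$; by the ellipticity-driven excursion and occupation-time arguments carried out in the appendix, a positive-density set of such times exists almost surely, furnishing a lower bound of matching polynomial order and completing the proof on letting $\eps \downarrow 0$.
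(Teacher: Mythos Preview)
Your overall strategy—set $S_n := |Y_n|$ and apply Theorem~\ref{thm:S-bounds}—is precisely the paper's. The paper's proof is then very short: verify~\eqref{eq:S-jump-bound}--\eqref{eq:S-zero-drift} for $S=|Y|$ and cite the theorem. You skip a non-trivial part of that verification, namely~\eqref{eq:S-ellipticity}. Condition~\eqref{eq:Y-ellipticity} asserts $\Pr(|\Delta Y_n| \geq \delta \mid \cF_n) \geq \delta$, which is \emph{not} the same as $\Pr(|Y_{n+1}| - |Y_n| \geq \delta' \mid \cF_n) \geq \delta'$: if $Y_n > 0$ and $\Delta Y_n$ happened always to be negative, the first condition would hold while the second would fail. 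One must first upgrade~\eqref{eq:Y-ellipticity} to two-sided ellipticity, i.e.\ both $\Pr(\Delta Y_n \geq \delta' \mid \cF_n) \geq \delta'$ and $\Pr(\Delta Y_n \leq -\delta' \mid \cF_n) \geq \delta'$, which the paper does in Lemma~\ref{lem:ellip-technical} using the martingale property~\eqref{eq:Y-drift} together with boundedness; then~\eqref{eq:S-ellipticity} for $|Y|$ follows by conditioning on the sign of $Y_n$. Your sentence ``whose excursion structure is regulated from below by~\eqref{eq:Y-ellipticity}'' papers over exactly this point.

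A second, smaller issue: your freestanding LIL argument for the lower bound in~\eqref{eq:|Y|-limit-diffusion} does not quite close. The conclusion $\limsup_n |Y_n|/\sqrt{n\log\log n} > 0$ does not imply $\max_{m \leq n}|Y_m| \geq n^{1/2-\eps}$ for \emph{all} large $n$, since the times at which $|Y_n|$ is of order $\sqrt{n}$ could in principle be very sparse on the logarithmic scale. Inside Theorem~\ref{thm:S-bounds} the paper instead proves a Kolmogorov-type bound (Lemma~\ref{lem:kolmogorov}) giving $\Pr(\max_{n \leq m \leq 2n} S_m \geq n^{1/2-\eps}) \geq 1 - O(n^{-2\eps})$, which is summable along $n=2^k$ and so yields the $\liminf$ via Borel--Cantelli. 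Since you are invoking Theorem~\ref{thm:S-bounds} anyway, the Azuma and LIL sketches are superfluous once the hypotheses are verified; but the LIL sketch, as written, would not stand on its own.
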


Before giving the proof of Proposition~\ref{prop:|Y|-sum}, we need one elementary technical result
related to the ellipticity hypothesis~\eqref{eq:Y-ellipticity}.
For $x \in \R$, write $x^+ = x \1 { x >0 }$
and $x^- = - x \1 { x < 0 }$, so that $x = x^+ -x^-$ and $|x| = x^+ + x^-$.

\begin{lemma}
\label{lem:ellip-technical}
Let $\zeta$ be a random variable on a probability space $(\Omega,\cF,\Pr)$ and let $\cG$ be a sub-$\sigma$-algebra of $\cF$.
Suppose that there exist $B \in \RP$ and $\delta >0$ such that $\Pr ( | \zeta | \leq B) = 1$,
$\Exp ( \zeta \mid \cG ) = 0$, a.s., and $\Pr ( | \zeta | \geq \delta \mid \cG ) \geq \delta$, a.s. Then, a.s., 
\[ \Pr \Bigl(\zeta \geq \frac{\delta^2}{4} \Bigmid\cG \Bigr) \geq \frac{\delta^2}{4B}, \text{ and }\Pr \Bigl(\zeta \leq -\frac{\delta^2}{4} \Bigmid\cG \Bigr) \geq \frac{\delta^2}{4B} .\] 
\end{lemma}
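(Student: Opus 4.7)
The plan is to exploit the mean-zero condition to force symmetry between the positive and negative parts of~$\zeta$, then combine with the ellipticity lower bound to control the conditional expectation of~$\zeta^+$ (equivalently~$\zeta^-$), and finally use a truncation argument to turn this expectation bound into the desired probability bound.

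First I would note that $\Exp(\zeta\mid\cG)=0$ together with $\zeta = \zeta^+ - \zeta^-$ gives $\Exp(\zeta^+\mid\cG) = \Exp(\zeta^-\mid\cG)$ a.s., and hence $\Exp(|\zeta|\mid\cG) = 2\Exp(\zeta^+\mid\cG) = 2\Exp(\zeta^-\mid\cG)$. The ellipticity hypothesis $\Pr(|\zeta|\geq\delta\mid\cG)\geq\delta$ combined with the Markov-type bound $\Exp(|\zeta|\mid\cG) \geq \delta\,\Pr(|\zeta|\geq\delta\mid\cG)$ then yields $\Exp(|\zeta|\mid\cG) \geq \delta^{2}$, so $\Exp(\zeta^{\pm}\mid\cG) \geq \delta^{2}/2$ a.s.

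Next, using the boundedness $|\zeta|\leq B$, I would split on the event $\{\zeta^+ \geq \delta^2/4\}$:
\[
\Exp(\zeta^+\mid\cG) \leq \tfrac{\delta^2}{4} + B\cdot \Pr\bigl(\zeta \geq \tfrac{\delta^2}{4}\bigmid\cG\bigr).
\]
Combining with the lower bound from the previous step gives $\Pr(\zeta\geq\delta^2/4\mid\cG) \geq \delta^2/(4B)$. Applying exactly the same truncation argument to $\zeta^-$ produces the symmetric statement $\Pr(\zeta\leq -\delta^2/4\mid\cG)\geq \delta^2/(4B)$.

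There is no real obstacle here: the argument is short and purely elementary. The only mild subtlety is being careful that the identity $\Exp(\zeta^+\mid\cG)=\Exp(\zeta^-\mid\cG)$ holds almost surely (which is immediate from the conditional mean-zero hypothesis and the integrability implied by boundedness), so that the ensuing chain of inequalities can be manipulated on a common full-measure event.
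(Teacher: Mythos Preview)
Your proposal is correct and follows essentially the same argument as the paper: use the mean-zero condition to get $\Exp(\zeta^+\mid\cG)=\Exp(\zeta^-\mid\cG)=\tfrac12\Exp(|\zeta|\mid\cG)\geq \delta^2/2$ from the ellipticity hypothesis, then bound $\Exp(\zeta^+\mid\cG)\leq \delta^2/4 + B\,\Pr(\zeta\geq\delta^2/4\mid\cG)$ and rearrange. The paper's proof is line-for-line the same.
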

\begin{proof}
Since $\Exp ( \zeta \mid \cG ) = 0$, we have
$\Exp ( \zeta^+ \mid \cG) = \Exp ( \zeta^- \mid \cG)$, and hence
\[ 2 \Exp ( \zeta^+ \mid \cG) = \Exp ( | \zeta | \mid \cG ) \geq 
\Exp ( | \zeta | \1{ | \zeta | \geq \delta } \mid \cG ) \geq 
\delta^2, \as, \]
by the fact that $\Pr ( | \zeta | \geq \delta \mid \cG ) \geq \delta$. 
Then, since $\zeta^+ \leq | \zeta | \leq B$, a.s.,
\[ \frac{\delta^2}{2} \leq \Exp  ( \zeta^+ \mid \cG) \leq \frac{\delta^2}{4} + B \Pr \Bigl( \zeta^+ \geq \frac{\delta^2}{4} \Bigmid\cG \Bigr) .\]
This gives the first statement in the lemma, and the same argument applies to $\zeta^-$.
\end{proof}

For  $x = (x_0, x_1, \ldots ) \in \R^{\ZP}$ a real-valued
sequence, we denote the $n$th difference by $\Delta x_n := x_{n+1} - x_n$.

\begin{proof}[Proof of Proposition~\ref{prop:|Y|-sum}]
For $n \in \ZP$, set $S_n = |Y_n| \in \RP$. We verify that $S =(S_n, n \in \ZP)$
satisfies the hypotheses of 
Theorem~\ref{thm:S-bounds}. 
First, the uniform bound on increments~\eqref{eq:S-jump-bound} follows from the bound
$|Y_{n+1}| - |Y_n| \leq |\Delta Y_{n}|$ (triangle inequality) together with 
the bound on $| \Delta Y_{n}|$ from~\eqref{eq:XY-jump-bound}. 
The martingale property~\eqref{eq:Y-drift} implies that $S$ is a submartingale; i.e., $\Exp ( \Delta S_n \mid \cF_n ) \geq 0$.
Moreover,
it also follows from~\eqref{eq:XY-jump-bound} that
if $| Y_n | \geq B$ then
$\Delta | Y_{n} | = \sign (Y_n) \cdot \Delta Y_n$, where $\sign (x) := \1 {x > 0 } - \1 { x < 0}$ 
is the sign of $x \in \R$. Hence,
\[    \Exp (\Delta S_n  \mid \cF_n )   = \sign (Y_n) \Exp ( \Delta Y_{n} \mid \cF_n ) = 0 , \text{ on } \{ S_n \geq B \},\]
by~\eqref{eq:Y-drift}; thus~\eqref{eq:S-zero-drift} holds (with $x_0 = B$). Finally,
it follows from Lemma~\ref{lem:ellip-technical} with~\eqref{eq:XY-jump-bound}, \eqref{eq:Y-drift},
and~\eqref{eq:Y-ellipticity} that there exists $\delta'>0$ (depending on $B$ and $\delta$) such that
\[ \Pr ( \Delta Y_{n} \geq \delta' \mid \cF_n) \geq \delta', \as, \text{ and } \Pr ( \Delta Y_{n}  \leq - \delta' \mid \cF_n) \geq \delta', \as \]
Hence,
\begin{align*} \Pr (  \Delta S_n  \geq \delta' \mid \cF_n )
& \geq \Pr ( \Delta Y_n \geq \delta' \mid \cF_n) \1 { Y_n \geq 0 } + \Pr ( \Delta Y_{n}  \leq -\delta' \mid \cF_n)\1 { Y_n < 0 }  , \end{align*}
which, a.s., is at least $\delta'$.
Thus~\eqref{eq:S-ellipticity} also holds (with $\delta = \delta'$).
Hence we may apply Theorem~\ref{thm:S-bounds} to get the result.
\end{proof}

The next result gives moments bounds that quantify the fact that $Y$ is diffusive.
 
\begin{lemma}
\label{lem:Y-moments}
Suppose that the process~$Y$ satisfies~\eqref{eq:XY-jump-bound}, \eqref{eq:Y-drift}, and~\eqref{eq:Y-ellipticity}.  For every $\gamma\in \RP$ there exist constants
$a_\gamma$ (depending on $\gamma$, $\delta$, and $B$) and $A_\gamma$ (depending on $\gamma$, $B$, and $\Exp ( |Y_0|^\gamma )$), such that $0 < a_\gamma  \leq A_\gamma < \infty$ and 
   \begin{equation}
   \label{eq:Y-moments-bounds}
    a_\gamma n^{\gamma/2} \leq \Exp(|Y_n|^{\gamma}) \leq A_\gamma  n^{\gamma/2}, 
\text{ for all } n\in \N.
    \end{equation} 
\end{lemma}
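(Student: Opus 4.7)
The backbone of the plan is to pin down the variance $\Exp(Y_n^2)$ exactly up to multiplicative constants, and then to extend to general $\gamma$ by convexity/concavity of $x \mapsto x^{\gamma/2}$ together with an anti-concentration argument for small $\gamma$. First I would observe that~\eqref{eq:Y-drift} (martingale) together with bounded increments~\eqref{eq:XY-jump-bound} makes the increments orthogonal in $L^2$, so
\[
\Exp(Y_n^2) \,=\, Y_0^2 + \sum_{k=0}^{n-1} \Exp\bigl( (\Delta Y_k)^2 \bigr).
\]
The uniform bound $|\Delta Y_k| \leq B$ forces $\Exp((\Delta Y_k)^2) \leq B^2$, giving the upper half $\Exp(Y_n^2) \leq Y_0^2 + B^2 n$. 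For the matching lower bound, Lemma~\ref{lem:ellip-technical} applied to $\zeta = \Delta Y_k$ together with~\eqref{eq:Y-ellipticity} yields a constant $\delta' > 0$ such that $\Exp((\Delta Y_k)^2 \mid \cF_k) \geq (\delta')^2 \Pr(|\Delta Y_k| \geq \delta' \mid \cF_k) \geq (\delta')^3$, a.s., so $\Exp(Y_n^2) \geq (\delta')^3 n$. This disposes of the case $\gamma = 2$.

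For the \emph{upper bound} with general $\gamma \geq 2$, I would appeal to the Burkholder--Davis--Gundy inequality applied to the martingale $Y - Y_0$: since the predictable quadratic variation satisfies $\langle Y \rangle_n \leq B^2 n$, a.s., one obtains $\Exp(|Y_n - Y_0|^\gamma) \leq C_\gamma B^\gamma n^{\gamma/2}$, and then the triangle inequality in $L^\gamma$ absorbs $|Y_0|^\gamma$ into the constant $A_\gamma$. For $\gamma \in [0,2)$, the concave version of Jensen's inequality gives $\Exp(|Y_n|^\gamma) \leq (\Exp Y_n^2)^{\gamma/2} \leq (Y_0^2 + B^2 n)^{\gamma/2}$, which is bounded by $A_\gamma n^{\gamma/2}$ for an appropriate $A_\gamma$.

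For the \emph{lower bound} with $\gamma \geq 2$, the convex version of Jensen's inequality (applied to $\phi(x) = x^{\gamma/2}$ and $X = Y_n^2$) yields $\Exp(|Y_n|^\gamma) \geq (\Exp Y_n^2)^{\gamma/2} \geq ((\delta')^3 n)^{\gamma/2}$. The genuinely delicate case is $\gamma \in (0,2)$, where Jensen pushes the wrong way, and this is where I expect the main obstacle. My approach would be the classical Paley--Zygmund device: the upper bound already established for $\gamma = 4$ gives $\Exp(Y_n^4) \leq C n^2$, while the lower bound for $\gamma=2$ gives $\Exp(Y_n^2) \geq c n$, so
\[
\Pr\Bigl( Y_n^2 \geq \tfrac{1}{2} \Exp(Y_n^2) \Bigr) \,\geq\, \frac{1}{4} \cdot \frac{(\Exp Y_n^2)^2}{\Exp(Y_n^4)} \,\geq\, \frac{c^2}{4C} \,=:\, c' > 0.
\]
This anti-concentration bound immediately implies $\Exp(|Y_n|^\gamma) \geq c' (cn/2)^{\gamma/2} \geq a_\gamma n^{\gamma/2}$, completing the proof. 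The subtle point is that the lower bound for small $\gamma$ is not purely a consequence of the variance bound; one genuinely needs the upper control on $\Exp(Y_n^4)$ to rule out the pathological scenario in which $Y_n^2$ concentrates on a rare event of very large values.
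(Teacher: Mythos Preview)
Your argument is correct and takes a genuinely different route from the paper. The paper proceeds by a Taylor expansion of $(x+u)^{2k}$ and an induction on $k$ to establish $a_k n^k \leq \Exp(Y_n^{2k}) \leq A_k n^k$ for all even integer exponents, then interpolates to general $\gamma \in (2k-2,2k]$ via Lyapunov's inequality (upper bound) and a uniform-integrability truncation argument (lower bound): using that $(Y_n/\sqrt{n})^{2k}$ is UI, one finds $b_k$ with $\Exp[(Y_n/\sqrt{n})^{2k}\1{|Y_n|\leq b_k\sqrt{n}}] \geq a_k/2$, and on that truncated event $|Y_n|^{\gamma-2k} \geq (b_k\sqrt{n})^{\gamma-2k}$.

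Your approach replaces the hands-on induction by off-the-shelf martingale inequalities: BDG (with the deterministic bound $[Y]_n \leq B^2 n$) handles the upper bound for $\gamma\geq 2$ in one stroke, and Jensen in both directions covers the complementary ranges. For the delicate lower bound at $\gamma\in(0,2)$, your Paley--Zygmund argument is conceptually the same device as the paper's UI truncation---both use a fourth-moment upper bound to prevent $Y_n^2$ from concentrating on a rare large-value event---but Paley--Zygmund packages it more crisply. The trade-off is that the paper's proof is entirely self-contained (no appeal to BDG), whereas yours is shorter and more transparent at the cost of invoking a heavier named inequality. One small remark: your invocation of Lemma~\ref{lem:ellip-technical} for the variance lower bound is unnecessary, since $\Exp((\Delta Y_k)^2\mid\cF_k)\geq\delta^3$ follows directly from~\eqref{eq:Y-ellipticity}.
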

\begin{proof}
First, we prove by induction on $k\in \N$ that
there exist constants $a_k >0$ (depending on $B$, $k$, and $\delta$)
and $A_k < \infty$ (depending on $B$, $k$, and $\Exp ( Y_0^{2k-2})$),
such that,  for all $k \in \N$,
\begin{equation}
    \label{eq:Y-moments-induction-hypothesis}
    a_k n^{k} \leq 
 \Exp ( Y_n^{2k} ) - \Exp (Y_0^{2k} ) \leq A_k n^{k}, \text{ for all } n \in \N.
 \end{equation}
For $k \in \N$,
Taylor's theorem with Lagrange
remainder says that, 
\begin{equation}
    \label{eq:taylor-2k}
 (x+u)^{2k} - x^{2k}
= 2k x^{2k-1} u + (2k-1) k u^2 (x+ \theta u)^{2k-2}, \text{ for all } x \in \R, u \in \R,
\end{equation}
where $\theta := \theta (x,u) \in (0,1)$. 
The right-most term in~\eqref{eq:taylor-2k} is always non-negative, so 
\begin{equation}
    \label{eq:taylor-lower}
 (x+u)^{2k} - x^{2k}
\geq  2k x^{2k-1} u +  (2k -1) k u^2 x^{2k-2} \1 { u x > 0  }, \text{ for } x, u \in \R.
\end{equation}
In particular,  since $\Exp ( \Delta Y_n \mid \cF_n ) = 0$,
it follows from taking (conditional) expectations in~\eqref{eq:taylor-lower} applied with $x = Y_{n}$ and $u = \Delta Y_n$ that
\begin{align*} 
 \Exp ( Y_{n+1}^{2k} - Y_n^{2k} \mid \cF_n ) & \geq Y_n^{2k-2} \Exp \bigl[ ( \Delta Y_n )^2 \1 {  \Delta Y_n  > 0 } \bigmid \cF_n \bigr] \1{ Y_n \geq 0} \\
 & {}  \qquad {} + Y_n^{2k-2} \Exp \bigl[ ( \Delta Y_n )^2 \1 {  \Delta Y_n  < 0 } \bigmid \cF_n \bigr] \1{ Y_n \leq 0} , \text{ for all } k \in \N.
\end{align*}
By Lemma~\ref{lem:ellip-technical} with~\eqref{eq:XY-jump-bound}, \eqref{eq:Y-drift}, and~\eqref{eq:Y-ellipticity}, 
\[  \Exp \bigl[ ( \Delta Y_n )^2 \1 { \Delta Y_n > 0 } \bigmid \cF_n \bigr] 
\geq (\delta^2/4)^2 \Pr ( { \Delta Y_n \geq \delta^2/4 } \mid \cF_n )
\geq \frac{\delta^6}{64 B}, \as,
\]
and the same bound holds with $\{ \Delta Y_n < 0 \}$ instead in the indicator.
Thus there exists $a >0$ (depending on $\delta$ and $B$) such that, for all $k \in \N$,
\[ \Exp ( Y_{n+1}^{2k} - Y_n^{2k}  )
\geq a \Exp ( Y_n^{2k-2} ), \text{ for all } n \in \ZP. \]
Hence
\begin{equation}
    \label{eq:induction-lower-bound}
\Exp ( Y_{n}^{2k} ) \geq \Exp ( Y_{n}^{2k} ) - \Exp ( Y_0^{2k}  )
= \sum_{m=0}^{n-1} \Exp ( Y_{m+1}^{2k} - Y_m^{2k}  )
\geq a \sum_{m=0}^{n-1} \Exp ( Y_m^{2k-2} ) .\end{equation}
Induction on $k \in \N$ using~\eqref{eq:induction-lower-bound}
establishes the lower bound in~\eqref{eq:Y-moments-induction-hypothesis}.

For the upper bound, it follows from~\eqref{eq:taylor-2k} that, for all 
$| x | \geq B$ and all $|u| \leq B$,
\begin{align*} 
 (x+u)^{2k} - x^{2k}
& \leq 2k x^{2k-1} u + (2k-1) k u^2 (|x|+ B)^{2k-2} \\
& \leq 2k x^{2k-1} u + 2^{2k-2} B^2 (2k-1) k x^{2k-2}.
\end{align*}
 On the other hand, for $| x| \leq B$ and $|u| \leq B$, we have simply that
$(x+u)^{2k} \leq (2B)^{2k}$. Thus there is a constant $A'_k < \infty$
(depending only on $k$ and $B$) such that
\[ (x+u)^{2k} - x^{2k}
\leq 2k x^{2k-1} u  + A'_k (1 + x^{2k-2} ) , \text{ for all } x \in \R, \, |u| \leq B.\]
In particular,  since $\Exp (\Delta Y_n \mid \cF_n ) = 0$, 
taking $x= Y_n$ and $u = \Delta Y_n$, we obtain
\[
\Exp ( Y_{n+1}^{2k} - Y_n^{2k} \mid \cF_n )
\leq A'_k ( 1 + Y_n^{2k-2} ) , \as, \]
and hence
\begin{equation}
    \label{eq:induction-upper-bound}
\Exp ( Y_{n}^{2k} ) - \Exp ( Y_0^{2k}  )
= \sum_{m=0}^{n-1} \Exp ( Y_{m+1}^{2k} - Y_m^{2k}  )
\leq A'_k n + A'_k \sum_{m=0}^{n-1} \Exp ( Y_m^{2k-2} ) .\end{equation}
Induction on $k \in \N$ using~\eqref{eq:induction-upper-bound}
establishes the upper bound in~\eqref{eq:Y-moments-induction-hypothesis}.

Given~\eqref{eq:Y-moments-induction-hypothesis}, suppose that $2k-2 < \gamma \leq 2k$, $k \in \N$.
Then Lyapunov's inequality,
$\Exp ( | Y_n|^\gamma )  \leq ( \Exp [ |Y_n|^{2k} ] )^{\gamma/(2k)}$,
gives the upper bound in~\eqref{eq:Y-moments-bounds},
since $\Exp ( | Y_0|^{2k-2} )$ (on which the constants depend)
can be bounded in terms of $\Exp ( |Y_0|^\gamma )$, as claimed. In the other direction, first note that the upper bound in~\eqref{eq:Y-moments-induction-hypothesis} shows that, for every $k \in \N$,
$\sup_{n \in \N} \Exp [ ( Y_n/\sqrt{n} )^{2k} ]< \infty$,
which implies that, for every $k \in \N$, the collection
$(Y_n/\sqrt{n})^{2k}$, $n \in \N$, is uniformly integrable. Hence there exists $b_k \in \N$ such that
\[ \sup_{n \in \N} \Exp [ (Y_n/\sqrt{n} )^{2k} \1 { Y_n > b_k \sqrt{n} } ] \leq a_k /2. \]
Hence, by the lower bound
in~\eqref{eq:Y-moments-induction-hypothesis},
\[ \Exp [ (Y_n/\sqrt{n} )^{2k} \1 { Y_n \leq b_k \sqrt{n} } ] \geq 
a_k - \Exp [ (Y_n/\sqrt{n} )^{2k} \1 { Y_n > b_k \sqrt{n} } ]
\geq a_k/2, \]
for all $n \in \N$.
Now, since $\gamma \leq 2k$,
\[ \Exp [ |Y_n|^\gamma ]
 \geq \Exp [ |Y_n|^{2k} \cdot |Y_n|^{\gamma-2k} \1 { Y_n \leq b_k \sqrt{n} }  ]
\geq b_k^{\gamma-{2k}} n^{(\gamma/2)-k} \Exp[ |Y_n|^{2k} ],
\]
which, together with the fact that
$b_k^{\gamma-2k} \geq b_k^{-2}$,
gives the lower bound in~\eqref{eq:Y-moments-bounds}.
\end{proof}

\section{Asymptotics for the horizontal process}
\label{sec:X-asymptotics} 

\subsection{Confinement}
\label{sec:confinement}

The following `confinement' result is a crucial ingredient in our analysis; roughly speaking, it gives a (deterministic) upper bound on the relative sizes of $Y_n$ and $X_n$, and will thus permit us to use a Taylor expansion to study $f(X_{n+1} ) - f(X_n)$ using~\eqref{eq:X-increment}. The reason for confinement is a sort of negative feedback in the dynamics: if $Y_n$ is very large compared to $X_n$, then the drift in $X_n$ through~\eqref{eq:X-increment} is large, which will mean $X_n$ tends to increase much faster than $Y_n$.

\begin{proposition}
\label{prop:confinement}
Let $\alpha \in (-1,\infty)$ and $\beta, \gamma \in \RP$. 
Suppose that~\eqref{ass:bounded-jumps} holds.
Then there exists a constant $C_0 := C_0 (B, \rho, \alpha, \gamma ) < \infty$ such that
\begin{equation}
\label{eq:line-confinement}
\sup_{n \in \ZP}\frac{\kappa_n (Z_n)}{(1+X_n)} \leq \rho \max ( |Y_0|^\gamma, C_0), \as
\end{equation}
\end{proposition}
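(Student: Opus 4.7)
The plan is to introduce the scalar
\[
R_n := \frac{\kappa_n(Z_n)}{\rho(1+X_n)} = \frac{|Y_n|^{\gamma}}{(1+X_n)^{1+\alpha}(1+n)^{\beta}},
\]
which is exactly the ratio we must bound. Since $X_0 \ge 0$ and $1+\alpha > 0$ yield $R_0 \le |Y_0|^\gamma$, the claim would follow by induction from a deterministic one-step inequality of the form $R_{n+1} \le \max(R_n, C_0)$ for some constant $C_0 = C_0(B,\rho,\alpha,\gamma) < \infty$, since iterating gives $R_n \le \max(|Y_0|^\gamma, C_0)$ for every $n$. The structural reason for such an inequality is a negative feedback built into the dynamics: a large value of $R_n$ makes $\kappa_n(Z_n) = \rho R_n(1+X_n)$ correspondingly large, which forces $X_{n+1}$ to jump up by a factor comparable to $1+\rho R_n/2$, thereby shrinking $R_{n+1}$.

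The only inputs I need are the deterministic consequences of~\eqref{ass:bounded-jumps}: $|Y_{n+1}|\le|Y_n|+B$ and $X_{n+1}\ge X_n+\kappa_n(Z_n)-B \ge 0$. I would split on the size of the drift. In the \emph{large-drift} case $\kappa_n(Z_n)\ge 2B$, the inequality $X_{n+1}\ge X_n+\kappa_n(Z_n)/2$ rearranges to $1+X_{n+1}\ge(1+X_n)(1+\rho R_n/2)$, and a short computation using $(a+b)^\gamma\le 2^\gamma(a^\gamma+b^\gamma)$ and $(2+n)^\beta\ge(1+n)^\beta$ yields
\[
R_{n+1} \;\le\; \frac{2^\gamma R_n}{(1+\rho R_n/2)^{1+\alpha}} + 2^\gamma B^\gamma.
\]
For $\alpha \ge 0$ the first term is uniformly bounded in $R_n$ (using $r/(1+\rho r/2)\le 2/\rho$); for $-1<\alpha<0$ it grows only like $R_n^{|\alpha|}$ with $|\alpha|<1$, so it is at most $R_n/2$ once $R_n$ exceeds a threshold depending only on $\alpha,\rho,\gamma$ and is bounded by a constant below that threshold. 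In every case the right-hand side is dominated by $\max(R_n,C_0)$.

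In the \emph{small-drift} case $\kappa_n(Z_n)<2B$ one has $R_n<2B/\rho$ automatically, together with $|Y_n|^\gamma<(2B/\rho)(1+X_n)^\alpha(1+n)^\beta$. Feeding these into $R_{n+1}$ via $|Y_{n+1}|^\gamma\le 2^\gamma(|Y_n|^\gamma+B^\gamma)$ and $(1+X_{n+1})^{1+\alpha}\ge 1$ reduces the task to bounding $(1+X_n)^\alpha/(1+X_{n+1})^{1+\alpha}$: for $\alpha\le 0$ this is at most $1$, and for $\alpha>0$ it is handled by splitting on $X_n\ge 2B$ (use $1+X_{n+1}\ge (1+X_n)/2$, giving a factor $\le 2^{1+\alpha}/(1+X_n)\le 2^{1+\alpha}$) versus $X_n<2B$ (use $(1+X_n)^\alpha\le(1+2B)^\alpha$), so that $R_{n+1}$ is bounded by an absolute constant. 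The main technical obstacle is not any single deep step but rather keeping the constants uniform across the whole range $\alpha\in(-1,\infty)$; in particular, the argument must ensure that the contraction factor $(1+\rho R_n/2)^{1+\alpha}$ does not degenerate as $\alpha\downarrow -1$, which is precisely what sustains the sublinear growth of $R_n\mapsto R_n/(1+\rho R_n/2)^{1+\alpha}$ driving the whole scheme.
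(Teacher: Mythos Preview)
Your proposal is correct and follows essentially the same route as the paper: define the ratio $R_n=\kappa_n(Z_n)/(\rho(1+X_n))$, split according to whether the drift is large or small, and prove a one-step bound $R_{n+1}\le\max(R_n,C_0)$ to iterate. The paper's implementation is slightly slicker in two places: it works with $\max(B,|Y_n|)^\gamma$ rather than $|Y_n|^\gamma$ (so the numerator simply doubles in one step), and it sets the large-drift threshold at $2^{\gamma/(1+\alpha)}+B$, which makes the large case yield $\zeta_{n+1}\le\zeta_n$ directly and avoids your separate analysis of $r\mapsto r/(1+\rho r/2)^{1+\alpha}$ for $\alpha\ge0$ versus $\alpha<0$.
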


Before giving the proof of the last result, we state one important consequence.

\begin{corollary}
    \label{cor:confinement-bound} 
Suppose that~\eqref{ass:bounded-jumps} holds. 
Then there is a constant $C < \infty$
(depending on $B, \rho, \alpha, \gamma$ as well as $|Y_0|$) such that,
for all $n \in \ZP$,
\[ \Delta X_n \leq C \max (1 , X_n), \text{ and } X_{n+1}  \leq C \max (1 , X_n). \]
\end{corollary}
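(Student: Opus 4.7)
The plan is to read the estimate off directly from Proposition~\ref{prop:confinement} combined with the bounded-innovation hypothesis~\eqref{ass:bounded-jumps}. I would begin with the increment representation~\eqref{eq:X-increment}, namely $\Delta X_n = \kappa_n(Z_n) + \xi_{n+1}^{(1)}$, and use $|\xi_{n+1}^{(1)}| \leq B$ almost surely, which is the content of~\eqref{eq:xi-jump-bound}. Proposition~\ref{prop:confinement} then supplies an a.s.\ bound $\kappa_n(Z_n) \leq K(1+X_n)$ with $K := \rho \max(|Y_0|^\gamma, C_0)$ depending only on $B, \rho, \alpha, \gamma$ and the initial $|Y_0|$.

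Putting these together gives $\Delta X_n \leq K(1+X_n) + B$ almost surely. Since $X_n \in \RP$, we have $1+X_n \leq 2\max(1, X_n)$ and $B \leq B\max(1, X_n)$, so $\Delta X_n \leq (2K+B)\max(1, X_n)$, which is the first claimed inequality. For the second, write $X_{n+1} = X_n + \Delta X_n$ and note that $X_n \leq \max(1, X_n)$; this yields $X_{n+1} \leq (2K+B+1)\max(1, X_n)$. Setting $C := 2K+B+1$ delivers both inequalities stated in the corollary with a single constant.

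There is no real obstacle here: the genuinely nontrivial content is already wrapped up in Proposition~\ref{prop:confinement}, and this corollary merely packages that estimate together with the jump bound~\eqref{eq:xi-jump-bound} into a convenient linear-growth statement with explicit dependence of constants on $B, \rho, \alpha, \gamma, |Y_0|$. The only thing to be careful about is the use of $\max(1, X_n)$ rather than $1+X_n$, which is a cosmetic rewriting that absorbs the additive constants into the multiplicative one.
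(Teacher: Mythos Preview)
Your proof is correct and follows essentially the same route as the paper: both combine the increment representation with the bound $|\xi_{n+1}^{(1)}|\leq B$ and the confinement estimate $\kappa_n(Z_n)\leq \rho\max(|Y_0|^\gamma,C_0)(1+X_n)$ from Proposition~\ref{prop:confinement}, then absorb the additive constants into a single multiplicative one using $1+X_n\leq 2\max(1,X_n)$. The paper's version simply says ``the claimed result follows'' at the point where you spell out the constants explicitly.
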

\begin{proof}
    From~\eqref{eq:XY-jump-bound}, we have $\Delta X_n \leq B + \kappa_n (Z_n)$,
    which together with~\eqref{eq:line-confinement} yields
    $\Delta X_n \leq B + \rho \max ( |Y_0|^\gamma, C_0) (1+X_n)$. The claimed result follows.
\end{proof}

\begin{proof}[Proof of Proposition~\ref{prop:confinement}]
Let $\alpha \in (-1,\infty)$ and $\beta, \gamma \in \RP$. 
Define, for $n \in \ZP$,
\begin{equation}
\label{eq:zeta-def}
  \zeta_n := \frac{\max (B, |Y_n|)^{\gamma}}{(1+n)^\beta (1+X_n)^{1 + \alpha}} \geq \rho^{-1} \frac{\kappa_n(Z_n)}{1+X_n},
\end{equation}
by~\eqref{eq:kappa-def}.
To verify~\eqref{eq:line-confinement}, it suffices to prove that $\sup_{n \in \ZP} \zeta_n \leq \max ( |Y_0|^\gamma, C_0)$, a.s.
By the bound on $|\Delta Y_{n} |$ from~\eqref{eq:XY-jump-bound}, 
if $|Y_n| \geq B$, then $\max ( B, |Y_{n+1}|) \leq |Y_n| + B \leq 2 |Y_n| = 2 \max ( B, |Y_n|)$,
while if $|Y_n| \leq B$, then $\max (B, |Y_{n+1} |) \leq 2 B = 2 \max (B, |Y_n|)$. Hence
\begin{equation}
\label{eq:max-B-Y}
\max ( B, |Y_{n+1}|)  \leq 2 \max ( B, |Y_{n}|) , \as 
\end{equation}
To proceed we consider two cases. We first consider the case
\begin{equation}
\label{eq:large_case}    
\frac{ {\rho} |Y_n|^\gamma}{(1 + n )^\beta(1 + X_n)^{1 + \alpha}} \geq 2^{\gamma/(1+\alpha)}+B,
\end{equation}
and then we consider  
\begin{equation}
\label{eq:small_case}    
\frac{ {\rho} |Y_n|^\gamma}{(1 + n )^\beta(1 + X_n)^{1 + \alpha}}  < 2^{\gamma/(1+\alpha)}+B.
\end{equation}
First, if~\eqref{eq:large_case} holds 
then, by~\eqref{eq:X-increment},
\begin{align*}
1 + X_{n+1}  & \geq  X_n+ \frac{\rho |Y_n|^\gamma}{(1+n)^\beta (1+X_n)^\alpha} + \xi^{(1)}_{n+1} \\
& \geq (2^{\gamma/(1+\alpha)}+B) (1+ X_n) - B
\geq 2^{\gamma/(1+\alpha)} (1+ X_n),
\end{align*}
using~\eqref{eq:xi-jump-bound} and the fact that $X_n \geq 0$.
Hence, using  the definition~\eqref{eq:zeta-def} for $\zeta_{n+1}$ and~\eqref{eq:max-B-Y},
\begin{align}
\label{eq:decrease-2g}
\zeta_{n+1} = \frac{\max ( B, |Y_{n+1}|)^\gamma}{(2+ n)^\beta (1+X_{n+1})^{1 + \alpha}} 
& \leq  
\frac{2^\gamma \max ( B, |Y_{n}|)^\gamma}{(1+n)^\beta (2^{\gamma/(1+\alpha)}(1+X_n))^{1 + \alpha}} \nonumber\\
& =  \frac{\max (B,|Y_n|)^\gamma}{(1+n)^\beta (1+X_n)^{1 + \alpha}}
=  \zeta_n.
\end{align}
For the second case, i.e.~when~\eqref{eq:small_case} holds, note first 
that $X_{n+1} \geq \max (0, X_n - B) $,
so that 
   \begin{equation}
\label{eq:bound-1g}
1 + B + X_{n+1} \geq \max (1+B, X_{n}) \geq (1 + X_n)/2.
\end{equation}
Moreover, using~\eqref{eq:max-B-Y} 
and the fact that $(1 + B)(1 + X_{n+1}) \geq 1 + B + X_{n+1}$, we obtain 
\begin{align}
 \zeta_{n+1} = \frac{\max(B, |Y_{n+1}|)^{\gamma}}{(2+n)^\beta (1+X_{n+1})^{1 + \alpha}} 
  & \leq  
  \frac{ 2^\gamma (1+B)^{1+\alpha} \max ( B, |Y_n|)^\gamma}{(1+n)^\beta (1+ B + X_{n+1})^{1 + \alpha}} \nonumber\\\label{upper_xi_second_case}
  & \leq   \frac{2^{\gamma + 1 +
    \alpha}  (1+B)^{1+\alpha} \max (B, |Y_n|)^\gamma} {(n+1)^\beta (1+X_n)^{1 + \alpha}} ,
    \end{align}
by~\eqref{eq:bound-1g}.  If~\eqref{eq:small_case} holds, then,
since $\alpha >-1$ and $\beta \geq 0$, from~\eqref{upper_xi_second_case} we conclude that
    \begin{equation}
\label{eq:bound-2g}
\zeta_{n+1}
 \leq 
    2^{\gamma + 1
    + \alpha}  (1+B)^{1+\alpha} \rho^{-1} \max ( 2^{\gamma/(1+\alpha)}+B , \rho B^{\gamma}) =: C'_0.
\end{equation}
The combination of~\eqref{eq:decrease-2g} and~\eqref{eq:bound-2g} implies that
for each $n$ it is  the case that at least one of the statements~(i) $\zeta_{n+1} \leq \zeta_n$, a.s.,
and (ii)~$\zeta_{n+1} \leq C'_0$, a.s., must hold. It follows that, a.s., $\zeta_n \leq \max ( \zeta_0, C'_0)$ for all $n \in \ZP$. Since $\zeta_0 \leq \max ( B , |Y_0|)^\gamma$ we deduce~\eqref{eq:line-confinement}.
 \end{proof}

 \subsection{Bounds on increments and moments}
\label{sec:compensator}

To study the process $X$ we will study the transformed process $X^\nu$ for some $\nu \geq 1$.
First we need upper and lower bounds on the increments $X_{n+1}^\nu - X_n^\nu$, or
the (conditional) expectations thereof. This is the purpose of the next result.

 \begin{lemma}
 \label{lem:x-increments}
It holds that, for every $\nu \geq 1$ and all $n \in \ZP$,
\begin{equation}
    \label{eq:increment-nu-lower}
 X_{n+1}^{\nu} - X_n^{\nu}  
\geq \nu X_n^{\nu-1} \Bigl( \kappa_n (Z_n) + \xi^{(1)}_{n+1} \Bigr), \as,
\end{equation}
Furthermore,
 suppose that~\eqref{ass:bounded-jumps} holds, and that $\alpha \in (-1,\infty)$, and  $\beta, \gamma \in \RP$.
 Then,
for every $\nu \geq 1$, there is a constant $C \in \RP$ such that,
for all $n \in \ZP$,
\begin{equation}
    \label{eq:mean-increment-nu-upper}
 \Exp (X_{n+1}^{\nu} - X_n^{\nu} \mid \cF_n )
\leq C X_n^{\nu -1} \kappa_n (Z_n)    + C  X_n^{\nu-2} , \text{ on } \{ X_n \geq 1+ 2B \},
\end{equation}
and 
\begin{equation}
    \label{eq:mean-increment-nu-upper-small}
 \Exp (X_{n+1}^{\nu} - X_n^{\nu} \mid \cF_n )
\leq C  , \text{ on } \{ X_n \leq 1+ 2B \}.
\end{equation}
 \end{lemma}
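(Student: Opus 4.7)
The plan is to handle the lower and upper bounds separately. For~\eqref{eq:increment-nu-lower}, convexity of $t \mapsto t^\nu$ on $\RP$ for $\nu \geq 1$ gives the tangent-line inequality $(X_n + u)^\nu \geq X_n^\nu + \nu X_n^{\nu-1} u$ whenever $X_n, X_n + u \geq 0$; applied with $u = \kappa_n(Z_n) + \xi_{n+1}^{(1)}$ this yields~\eqref{eq:increment-nu-lower}, since $X_{n+1} \geq 0$ by construction. The small-state estimate~\eqref{eq:mean-increment-nu-upper-small} is immediate from Corollary~\ref{cor:confinement-bound}, which bounds $X_{n+1}$ by a constant multiple of $\max(1,X_n)$, so both $X_n^\nu$ and $X_{n+1}^\nu$ are uniformly bounded on $\{X_n \leq 1+2B\}$.

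The work lies in~\eqref{eq:mean-increment-nu-upper}. I would set $\tilde y_n := X_n + \kappa_n(Z_n)$, so that $X_{n+1} = \tilde y_n + \xi_{n+1}^{(1)}$ with $\tilde y_n \geq X_n \geq 1+2B$ on the relevant event and $|\xi_{n+1}^{(1)}| \leq B$ a.s. Applying Taylor's theorem with Lagrange remainder to $t \mapsto t^\nu$ (which is $C^\infty$ on the relevant interval, contained in $[1+B,\infty)$) gives
\begin{equation*}
 X_{n+1}^\nu = \tilde y_n^\nu + \nu \tilde y_n^{\nu-1} \xi_{n+1}^{(1)} + \tfrac{\nu(\nu-1)}{2}\bigl(\tilde y_n + \theta_n \xi_{n+1}^{(1)}\bigr)^{\nu-2}\bigl(\xi_{n+1}^{(1)}\bigr)^2
\end{equation*}
for some random $\theta_n \in (0,1)$. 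Since $\{X_n \geq 1+2B\} \subset \{X_n > B\}$, \eqref{eq:xi-martingale} forces $\Exp(\xi_{n+1}^{(1)} \mid \cF_n) = 0$ there, killing the middle term after taking conditional expectation. Thus it suffices to extract $C X_n^{\nu-1}\kappa_n(Z_n)$ from $\tilde y_n^\nu - X_n^\nu$ and $C X_n^{\nu-2}$ from the Lagrange remainder.

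For the first, the mean value theorem yields $\tilde y_n^\nu - X_n^\nu \leq \nu \tilde y_n^{\nu-1}\kappa_n(Z_n)$, using $\nu - 1 \geq 0$, while the confinement of Proposition~\ref{prop:confinement} gives $\kappa_n(Z_n) \leq C(1+X_n)$ and hence $\tilde y_n \leq CX_n$ on $\{X_n \geq 1\}$. For the remainder, the same confinement gives $\tilde y_n + \theta_n \xi_{n+1}^{(1)} \leq CX_n$, while $X_n \geq 1+2B$ yields $\tilde y_n + \theta_n \xi_{n+1}^{(1)} \geq X_n - B \geq X_n/2$; splitting into $\nu \geq 2$ (where $t \mapsto t^{\nu-2}$ is non-decreasing) and $1 \leq \nu < 2$ (where it is non-increasing) and reading the inequality in the appropriate direction, either way produces $(\tilde y_n + \theta_n \xi_{n+1}^{(1)})^{\nu-2} \leq C X_n^{\nu-2}$, which combined with $(\xi_{n+1}^{(1)})^2 \leq B^2$ completes the estimate. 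The main obstacle I anticipate is precisely this case split on the sign of $\nu - 2$: the Taylor remainder must be controlled from the upper end of the Taylor interval when $\nu \geq 2$ and from the lower end when $\nu < 2$, though in both regimes Proposition~\ref{prop:confinement} supplies the two-sided confinement of $\tilde y_n$ needed to make the bound quantitative in $X_n$.
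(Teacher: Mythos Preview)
Your argument is correct, and for~\eqref{eq:increment-nu-lower} and~\eqref{eq:mean-increment-nu-upper-small} it matches the paper's proof (convexity/Taylor for the lower bound, Corollary~\ref{cor:confinement-bound} for the small-state bound).

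For~\eqref{eq:mean-increment-nu-upper} you take a genuinely different decomposition. The paper expands $t\mapsto t^\nu$ around $X_n$ with increment $\Delta X_n = \kappa_n(Z_n)+\xi_{n+1}^{(1)}$, so the Lagrange remainder carries a factor $(\Delta X_n)^2$; since $\kappa_n(Z_n)$ can be large, the paper then needs a separate estimate $(\Delta X_n)^2 \le C(1+X_n)\kappa_n(Z_n)+C$ (their display~\eqref{eq:X-sq-upper-bound}), obtained from Proposition~\ref{prop:confinement}, before the remainder collapses to $CX_n^{\nu-1}\kappa_n(Z_n)+CX_n^{\nu-2}$. You instead expand around the $\cF_n$-measurable point $\tilde y_n = X_n+\kappa_n(Z_n)$ with increment $\xi_{n+1}^{(1)}$, which is bounded by~$B$; this makes the remainder immediately of order $X_n^{\nu-2}$ and isolates the drift contribution in the separate term $\tilde y_n^\nu - X_n^\nu$, which you dispatch with the mean value theorem plus confinement. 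Both routes use Proposition~\ref{prop:confinement} and the same $\nu\gtrless 2$ case split on the remainder, but your decomposition is a little more economical: it avoids the intermediate bound on $(\Delta X_n)^2$ and yields the two summands in~\eqref{eq:mean-increment-nu-upper} from two distinct sources rather than having them mix.
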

 \begin{proof}
 Let $\nu \geq 1$. 
 Taylor's theorem with Lagrange
remainder says that, for all $x \in \RP$ and all $u \in \R$ with  $x+u > 0$, 
\begin{equation}
    \label{eq:nu-taylor}
D_\nu (x, u) := (x+u)^{\nu} - x^{\nu} - \nu x^{\nu-1} u = \frac{\nu(\nu-1)}{2} u^2 (x+ \theta u)^{\nu -2},
\end{equation}
where $\theta := \theta (x,u) \in (0,1)$.
Here $x + \theta u \geq \min (x,x+u) > 0$,
and so
in~\eqref{eq:nu-taylor} it holds that
 $u^2 (x+ \theta u)^{\nu -2} \geq 0$. 
 On the other hand, if $u = -x$, then $D_\nu (x,-x) = x^\nu (\nu - 1)$.
  Hence,
for all $x \in \RP$ and all $u \in \R$ with  $x+u \geq 0$, 
it holds that
\begin{equation}
    \label{eq:nu-taylor-bounds}
D_\nu (x, u) \geq 0 \text{ if } \nu \geq 1 .
\end{equation}
Applying~\eqref{eq:nu-taylor-bounds} with $x= X_n$ and $u = \Delta X_n$,
and using the fact that $X_n + \Delta X_n = X_{n+1} \geq 0$,
 together with~\eqref{eq:X-increment}, 
 we verify~\eqref{eq:increment-nu-lower}.

Now suppose also that~\eqref{ass:bounded-jumps} holds, $\alpha \in (-1,\infty)$,  and $\beta, \gamma \in \RP$,
so that the hypotheses of Corollary~\ref{cor:confinement-bound} are satisfied.
By Corollary~\ref{cor:confinement-bound},
there is a constant $C < \infty$ such that $X_{n+1} = X_n + \Delta X_n \leq C(1+X_n)$, and hence,
 on $\{ X_n \leq 1 + 2 B \}$, $X_{n+1}^\nu \leq C^\nu (2+2B)^\nu$,
 which yields~\eqref{eq:mean-increment-nu-upper-small}.
Corollary~\ref{cor:confinement-bound} also implies that
  there is a constant $C < \infty$ 
  such that $\Delta X_n  \leq C X_n$ on $\{ X_n \geq 1+2B\}$;
  also, by~\eqref{eq:X-increment} and~\eqref{eq:XY-jump-bound}, $\Delta X_n \geq -B$, and hence
  \begin{equation} 
  \label{eq:delta-x-bounds}
  - \frac{X_n}{2}  \leq  \Delta X_n \leq C X_n, \text{ on } \{ X_n \geq 1+2 B \} .\end{equation}
Then from~\eqref{eq:nu-taylor} applied with $x = X_n$, $u = \Delta X_n$, and using
the upper bound in~\eqref{eq:delta-x-bounds} (when $\nu \geq 2$) or the lower
bound in~\eqref{eq:delta-x-bounds} (when $1 \leq \nu \leq 2$)
it follows that
\begin{equation}
    \label{eq:increment-two-side-nu}
 X_{n+1}^{\nu} - X_n^{\nu}
 \leq \nu X_n^{\nu-1}  \Delta X_n + C ( \Delta X_n )^2  X_n^{\nu -2}  , \text{ on } \{ X_n \geq 1+2B\}.
\end{equation}
From~\eqref{eq:X-increment} and the inequality $(a+b)^2 \leq 2 (a^2+ b^2)$ we have
\[ ( \Delta X_n )^2  \leq 2 (\kappa_n (Z_n))^2 + 2 B^2
\leq 2 (1+X_n) \kappa_n (Z_n) \cdot \frac{\kappa_n (Z_n)}{1+X_n} + 2 B^2 
, \as \]
Hence Proposition~\ref{prop:confinement} implies that
  there is a constant $C < \infty$ for which
\begin{equation}
\label{eq:X-sq-upper-bound}
 ( \Delta X_n )^2 \leq C (1+X_n) \kappa_n (Z_n) + C .\end{equation}
 Then from~\eqref{eq:increment-two-side-nu} with the bound~\eqref{eq:X-sq-upper-bound}, 
 using that $1 + X_n \leq 2X_n$ on $\{ X_n \geq 1 \}$, 
 and~\eqref{eq:X-increment},
\[ 
 X_{n+1}^{\nu} - X_n^{\nu}
 \leq C  X_n^{\nu-1} \kappa_n (Z_n) + \nu X_n^{\nu-1} \xi_{n+1}^{(1)}  + C   X_n^{\nu -2}  , \text{ on } \{ X_n \geq 1 +2B \}, 
\]
 from which, on taking (conditional) expectations and using~\eqref{eq:xi-martingale}, yields~\eqref{eq:mean-increment-nu-upper}.
\end{proof}

The next result gives upper bounds on the growth rates of the moments of~$X_n$.

 \begin{proposition}
 \label{prop:nu-moments}
Suppose that $\alpha \in (-1,\infty)$,   $\beta, \gamma \in \RP$, and that~\eqref{ass:bounded-jumps} and~\eqref{ass:martingale} hold.
Suppose that $1 +\gamma  > \alpha + 2 \beta$. Then for every $\nu  \in \RP$ and all $\eps>0$,
    \begin{equation}
    \label{eq:x-nu-moments}
     \Exp ( X_n^{\nu} ) - \Exp ( X_0^{\nu} ) = O ( n^{\nu \chi+\eps} ), \text{ as } n \to \infty.
    \end{equation}
 \end{proposition}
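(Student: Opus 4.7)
The plan is to bootstrap from the one-step increment estimate of Lemma~\ref{lem:x-increments}, combined with H\"older's inequality and the moment bounds on $|Y_m|^\gamma$ from Lemma~\ref{lem:Y-moments}. Summing Lemma~\ref{lem:x-increments} telescopically over $m = 0, \ldots, n-1$ yields, for $\nu \geq 1$,
\[ \Exp X_n^\nu - \Exp X_0^\nu \leq C\sum_{m=0}^{n-1}\Exp\bigl[X_m^{\nu-1}\kappa_m(Z_m)\1{X_m\geq 1+2B}\bigr] + C\sum_{m=0}^{n-1}\Exp\bigl[X_m^{\nu-2}\1{X_m\geq 1+2B}\bigr] + Cn. \]
An elementary check (valid whatever the sign of $\alpha$, since $\alpha > -1$) shows $X_m^{\nu-1}/(1+X_m)^\alpha \leq CX_m^{\nu-1-\alpha}$ on $\{X_m\geq 1\}$, and so, by~\eqref{eq:kappa-def}, the main term on the right is at most $C\sum_m (1+m)^{-\beta}\Exp[X_m^{\nu-1-\alpha}|Y_m|^\gamma]$.

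To this sum I would apply H\"older's inequality with conjugate exponents $p, q > 1$ together with Lemma~\ref{lem:Y-moments}:
\[ \Exp[X_m^{\nu-1-\alpha}|Y_m|^\gamma] \leq (\Exp X_m^{p(\nu-1-\alpha)})^{1/p}(\Exp|Y_m|^{q\gamma})^{1/q} \leq Cm^{\gamma/2}(\Exp X_m^{p(\nu-1-\alpha)})^{1/p}. \]
The algebraic identity driving the proof---implicit in the definition~\eqref{eq:chi-def} of $\chi$---is $(1+\alpha)\chi = 1 + \gamma/2 - \beta$. With it, an inductive bound $\Exp X_m^{p(\nu-1-\alpha)} = O(m^{p(\nu-1-\alpha)\chi+\eps'})$ sums to $\Exp X_n^\nu = O(n^{\nu\chi+\eps'/p})$, matching the target. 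The $\Exp X_m^{\nu-2}$-sum is subleading since $\chi > 1/2$ (which holds under~\eqref{eq:3-parameters-condition-superdiffusive}; cf.~Remark~\ref{rems:main-thm-i}), and the $Cn$ term is absorbed once $\nu\chi \geq 1$.

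I would carry this out as a strong induction on a suitably chosen sequence of exponents. Since $\alpha > -1$, for any $\nu$ one may take $p = 1+\eta$ with $\eta > 0$ small enough that $p(\nu-1-\alpha) < \nu$, so each step strictly reduces the moment order, and the range of valid $\nu$ advances by roughly $1+\alpha$ per step. The base case uses the observation that at $\nu = 1+\alpha \geq 1$ (when $\alpha \geq 0$), one has $\nu-1-\alpha = 0$ and so $X_m^{\nu-1-\alpha} \leq 1$ on $\{X_m\geq 1\}$; the H\"older step is bypassed, and a direct calculation gives $\Exp X_n^{1+\alpha} = O(n^{(1+\alpha)\chi})$. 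Bounds for $\nu < 1+\alpha$, for small $\nu$ outside the direct scope of Lemma~\ref{lem:x-increments}, and for non-integer $\nu$, all follow from Lyapunov's inequality $\Exp X_n^\nu \leq (\Exp X_n^K)^{\nu/K}$ (valid for $\nu \leq K$), transferring bounds down from larger moments once those are established.

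The main obstacle is the base case when $\alpha < 0$: there $1+\alpha < 1$ lies below the threshold $\nu \geq 1$ required by Lemma~\ref{lem:x-increments}, and the factor $(1+X_m)^{-\alpha}$ in $\kappa_m(Z_m)$ actually grows in $X_m$, so the naive bound on $\Exp\kappa_m(Z_m)$ at $\nu = 1$ is self-referential. I would handle this with a separate recursive argument for $a_n := \Exp X_n$: Jensen's inequality (using $|\alpha|<1$ so that $x\mapsto x^{|\alpha|}$ is concave) gives $a_{n+1} - a_n \leq C(1+n)^{\gamma/2-\beta}(1 + a_n^{|\alpha|}) + C$, from which $a_n = O(n^\chi)$ follows by comparison, with the exponent matching through the identity $\chi(1-|\alpha|) = 1+\gamma/2-\beta$. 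This supplies the initial polynomial moment bound from which the H\"older bootstrap can proceed.
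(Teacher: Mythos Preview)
Your approach is viable but genuinely different from the paper's, and the sketch has a gap in the base case that you should be aware of.

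\textbf{Difference in strategy.} The paper fixes a single $\nu > \max(2,1+\alpha)$ and chooses the H\"older exponent $p = \nu/(\nu-\alpha-1)$, so that the moment appearing on the right is $\Exp X_m^{\nu}$ itself. This yields a \emph{self-referential} one-step inequality (Lemma~\ref{lem:moment-iteration}), and the proof then iterates on the exponent: starting from a crude polynomial bound $\Exp X_n^\nu = O(n^{\theta_0})$ extracted from the exponential confinement estimate (Lemma~\ref{lem:first-bound}), the recursion $\theta_{k+1} = 1 + (F(\theta_k)\vee G(\theta_k))$ is shown to converge to $\nu\chi$ (Lemma~\ref{lem:kappa}). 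By contrast, you take $p$ close to~$1$ so that $p(\nu-1-\alpha) < \nu$, and induct on the moment order~$\nu$. Both routes rest on the same increment estimate and the same algebraic identity $(1+\alpha)\chi = 1+\gamma/2-\beta$.

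\textbf{The gap.} Your base case at $\nu_0 = 1+\alpha$ (for $\alpha\ge 0$) claims $\Exp X_n^{1+\alpha} = O(n^{(1+\alpha)\chi})$, but the telescoping sum also contains the $Cn$ contribution from~\eqref{eq:mean-increment-nu-upper-small} and the $\sum \Exp[X_m^{\nu_0-2}\1{X_m\ge 1+2B}] = O(n)$ contribution. When $\gamma < 2\beta$ one has $(1+\alpha)\chi = 1+\gamma/2-\beta < 1$, so these $O(n)$ terms dominate and you only obtain $\Exp X_n^{1+\alpha} = O(n)$. The same issue afflicts your $\alpha<0$ recursion for $a_n = \Exp X_n$ whenever $\chi<1$. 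This suboptimal base then propagates through the induction on~$\nu$ as a fixed excess $\delta_0 = 1-(1+\alpha)\chi$ (or $1-\chi$) in the exponent. The repair is the Lyapunov step you already mention, but applied in the \emph{other} direction: having established $\Exp X_n^{K} = O(n^{K\chi+\delta_0})$ for arbitrarily large~$K$, deduce $\Exp X_n^{\nu} \le (\Exp X_n^K)^{\nu/K} = O(n^{\nu\chi + \nu\delta_0/K})$ and send $K\to\infty$. You should make this pass explicit. A smaller point: in the $\alpha<0$ case, Jensen alone does not give $\Exp[|Y_n|^\gamma(1+X_n)^{|\alpha|}] \le C n^{\gamma/2}(1+a_n)^{|\alpha|}$ since $Y_n$ and $X_n$ are not independent; you need H\"older first (with $p|\alpha|\le 1$) and then Jensen on the $X$-factor. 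The paper avoids all of these edge cases by working only with $\nu > 2$, for which $\nu\chi > 1$ automatically, and by obtaining its seed polynomial bound from confinement rather than from a small-$\nu$ computation.
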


 Proposition~\ref{prop:nu-moments}, which we prove later in this section,
 already allows us to deduce an asymptotic upper bound for $X_n$. Recall the
 definition of~$\chi$ from~\eqref{eq:chi-def}.

\begin{corollary}
\label{cor:X-limsup}
Suppose that $\alpha \in (-1,\infty)$,   $\beta, \gamma \in \RP$,
and that~\eqref{ass:bounded-jumps} and~\eqref{ass:martingale} hold.
Suppose that $1 +\gamma  > \alpha + 2 \beta$. Then 
\begin{equation}
    \label{eq:Xn-limsup}
 \limsup_{n \to \infty} \frac{\log X_n}{\log n} \leq \chi, \as \end{equation}
\end{corollary}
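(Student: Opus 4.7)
The plan is to deduce the a.s.\ upper bound from the moment bound in Proposition~\ref{prop:nu-moments} via a standard Markov and Borel--Cantelli argument, then remove the~$\varepsilon$ by a countable intersection.

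Fix $\varepsilon>0$. First I would choose $\nu$ large enough (depending on $\varepsilon$) so that $(2\nu-1)\varepsilon>1$. Then, applying Markov's inequality and Proposition~\ref{prop:nu-moments} (with the exponent inside the $O(\cdot)$ chosen equal to $\varepsilon$), we obtain, for some constant $C<\infty$,
\[
\Pr \bigl( X_n \geq n^{\chi+2\varepsilon}\bigr) \leq \frac{\Exp (X_n^\nu)}{n^{\nu(\chi+2\varepsilon)}} \leq C n^{-(2\nu-1)\varepsilon},
\]
for all $n\in\N$. By the choice of $\nu$, the right-hand side is summable in~$n$, so the first Borel--Cantelli lemma yields that $\{ X_n \geq n^{\chi+2\varepsilon}\}$ occurs for only finitely many~$n$, a.s. Consequently, $\limsup_{n\to\infty} \log X_n / \log n \leq \chi + 2\varepsilon$ on an event $\Omega_\varepsilon$ of full probability.

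To conclude, I would take $\varepsilon$ through a countable sequence, e.g.\ $\varepsilon_k = 1/k$, and intersect the a.s.\ events $\Omega_{\varepsilon_k}$ (each of full probability) to obtain a single a.s.\ event on which $\limsup_{n\to\infty} \log X_n / \log n \leq \chi$, which is~\eqref{eq:Xn-limsup}.

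There is no real obstacle here since the estimates are already in hand; the only mild care needed is to choose $\nu=\nu(\varepsilon)$ so that the tail bound from Markov becomes summable, which is why the polynomial-moments bound of Proposition~\ref{prop:nu-moments} (valid for arbitrarily large~$\nu$) is essential. The complementary lower bound in Theorem~\ref{thm:main-thm} is the substantive remaining task, and will require a separate argument beyond the scope of this corollary.
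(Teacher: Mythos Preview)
Your argument is correct. The route differs from the paper's in a mild but genuine way. The paper fixes $\nu=2$, uses that $X$ is a non-negative submartingale (from hypothesis~\eqref{ass:martingale}) to apply Doob's maximal inequality, and then runs Borel--Cantelli along a sparse subsequence $t_k=k^{1/\varepsilon}$, interpolating back to all~$n$ via the running maximum. You instead take $\nu=\nu(\varepsilon)$ large, apply plain Markov, and sum directly over all~$n$, which sidesteps both Doob's inequality and the subsequence bookkeeping. The trade-off is that the paper only consumes the second-moment case of Proposition~\ref{prop:nu-moments} but needs the submartingale structure, whereas your approach is more elementary in its probabilistic ingredients but requires the full strength of Proposition~\ref{prop:nu-moments} for arbitrarily large~$\nu$; since that proposition is already stated for every $\nu\in\RP$, your version is a perfectly legitimate (and arguably cleaner) alternative here.
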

\begin{proof}
Let $\eps>0$, and define $t_k := k^{1/\eps}$ for $k \in \N$. Observe that
\[
\Pr \Bigl(\sup_{0 \leq m \leq t_k} X_m > t_k^{\chi + 2\eps} \Bigr) =  \Pr \Bigl( \sup_{0 \leq m \leq t_k}X^{2}_m > t_k^{2(\chi + 2\eps)}\Bigr).
\]
By hypothesis~\eqref{ass:martingale}, the process $X$ is a non-negative submartingale.
Then, by Doob's submartingale inequality
and the $\nu = 2$ case of~\eqref{eq:x-nu-moments} it follows that 
\begin{align*}
  \Pr \Bigl( \sup_{0 \leq m \leq t_k} X^{2}_m > t_k^{2(\chi + 2\eps)}\Bigr) 
&  \leq \frac{\Exp ( X_{t_k}^{2} )}{t_k^{2(\chi + 2\eps)}}  
\leq C t_k^{-2\eps} = C k^{-2}, \text{ for all } k \in \N.
\end{align*}
Hence we obtain
\[
\sum_{k \in \N} \Pr \Bigl(\sup_{0 \leq m \leq t_k} X_m > t_k^{\chi + 2\eps} \Bigr) \leq C \sum_{k \in \N} k^{-2} < \infty.
\]
It follows from the Borel--Cantelli lemma that there is a (random) $K \in \N$ 
with $\Pr (K < \infty) =1$ such that $\sup_{0 \leq m \leq t_k} X_m \leq t_k^{\chi+2\eps}$
for all $k \geq K$.
For every $n \in \N$, there exists $k = k(n) \in \N$ such that
$t_k \leq n < t_{k+1}$, and $\lim_{n \to \infty} k(n) = \infty$;
therefore, for all $n$ large enough such that $k \geq K$, it holds that
\[
X_n \leq \sup_{0 \leq m \leq t_{k+1}} X_m \leq t_{k+1}^{\chi + 2\eps} \leq \bigl( t_{k+1} / t_k \bigr)^{\chi + {2\eps}} n^{\chi + 2\eps} \leq C n^{\chi + 2\eps},
\]
where $C := \sup_{k \in \N} (t_{k+1}/t_k)^{\chi+2\eps} < \infty$.
Thus 
$\limsup_{n\to\infty} \frac{\log X_n }{\log n} \leq \chi + 2 \eps$, a.s., 
and since $\eps >0$ was arbitrary, we obtain~\eqref{eq:Xn-limsup}.
\end{proof}

We work towards the proof of Proposition~\ref{prop:nu-moments}.
The idea is to iterate a contraction argument
that gives bounds of the form
$\Exp (X_n^\nu) = O( n^{\theta_k} )$
for a sequence $\theta_k \to \alpha \chi$.
The following is the basis of the iteration scheme.

\begin{lemma}
    \label{lem:moment-iteration}
    Suppose that $\alpha \in (-1,\infty)$, $\beta, \gamma \in \RP$, and that~\eqref{ass:bounded-jumps} and~\eqref{ass:martingale} hold. Then
There exists a constant $\Lambda_0 < \infty$ such that, a.s.,
\begin{equation}
\label{eq:exponential-confinement}
X_n \leq \Lambda_0^n \max (1 , X_0), \text{ for all } n \in \ZP.
\end{equation}
Suppose that $1 +\gamma  > \alpha + 2 \beta$
and $\nu > \max ( \alpha +1, 2)$. Then $\Exp ( X_n ^\nu ) <\infty$ for all $n \in \ZP$, and 
there exists $C \in \RP$ such that, for all $n \in \ZP$,
\begin{equation}
    \label{eq:moment-iteration}
\Exp \big[X_{n+1}^{\nu} - X_n^{\nu}\big]
\leq 
C (1+n)^{\frac{\gamma}{2} - \beta}  \bigl( \Exp [X_n^{\nu}] \bigr)^{\frac{\nu-\alpha -1}{\nu}} 
+ C  \bigl( \Exp [X_n^{\nu}]\big)^{\frac{\nu-2}{\nu}}  + C.
\end{equation}
\end{lemma}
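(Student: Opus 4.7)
The plan is to handle the two assertions separately. For the exponential confinement bound~\eqref{eq:exponential-confinement}, I would simply iterate Corollary~\ref{cor:confinement-bound}, which gives $X_{n+1} \leq C \max(1, X_n)$ almost surely for some constant~$C$; setting $\Lambda_0 := C$, induction on $n$ yields $X_n \leq \Lambda_0^n \max(1, X_0)$. Since $X_0$ is a fixed (deterministic) initial condition, this pointwise bound immediately gives $\Exp(X_n^\nu) \leq \Lambda_0^{\nu n} \max(1,X_0)^\nu < \infty$ for every $\nu \geq 0$ and every $n \in \ZP$, establishing integrability.

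For the recursion~\eqref{eq:moment-iteration}, I would start from Lemma~\ref{lem:x-increments}. Combining~\eqref{eq:mean-increment-nu-upper} on the event $\{X_n \geq 1+2B\}$ with~\eqref{eq:mean-increment-nu-upper-small} on its complement, and taking unconditional expectations, yields
\[
\Exp \bigl[ X_{n+1}^{\nu} - X_n^{\nu} \bigr] \leq C\, \Exp \bigl[ X_n^{\nu-1} \kappa_n(Z_n) \bigr] + C\, \Exp [ X_n^{\nu-2} ] + C,
\]
where the additive constant absorbs the bounded-state contribution. The task then reduces to controlling each of the two expectations on the right by an appropriate power of $\Exp X_n^\nu$.

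For the $\Exp[X_n^{\nu-2}]$ term, the hypothesis $\nu \geq 2$ lets me apply Lyapunov's inequality to obtain $\Exp[X_n^{\nu-2}] \leq (\Exp X_n^\nu)^{(\nu-2)/\nu}$, giving the middle term of~\eqref{eq:moment-iteration}. For the first term, I would use the elementary bound $X_n^{\nu-1}/(1+X_n)^\alpha \leq C X_n^{\nu-1-\alpha}$ on $\{X_n \geq 1\}$ (valid for either sign of $\alpha$, using $(1+X_n)^{|\alpha|} \leq 2^{|\alpha|} X_n^{|\alpha|}$ when $\alpha<0$), absorbing the $\{X_n < 1\}$ contribution into the additive constant. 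By~\eqref{eq:kappa-def} this gives
\[
\Exp\bigl[ X_n^{\nu-1} \kappa_n(Z_n) \bigr] \leq \frac{C}{(1+n)^\beta}\, \Exp \bigl[ |Y_n|^{\gamma} X_n^{\nu-\alpha-1} \bigr] + C .
\]
Since $\nu > \alpha + 1$, I apply H\"older's inequality with conjugate exponents $p = \nu/(\alpha+1)$ and $q = \nu/(\nu-\alpha-1)$ to factor the remaining expectation as $(\Exp |Y_n|^{\gamma \nu/(\alpha+1)})^{(\alpha+1)/\nu} (\Exp X_n^\nu)^{(\nu-\alpha-1)/\nu}$. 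Lemma~\ref{lem:Y-moments} then bounds the $Y$-factor by $C n^{\gamma/2}$, and combining with the $(1+n)^{-\beta}$ prefactor produces the first term of~\eqref{eq:moment-iteration}.

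The work is essentially a careful bookkeeping exercise rather than a conceptual obstacle: the main things to watch are handling both signs of $\alpha$ in the comparison $X_n^{\nu-1}/(1+X_n)^\alpha \lesssim X_n^{\nu-1-\alpha}$, and ensuring that $\nu \geq 2$ (for Lyapunov) and $\nu > \alpha+1$ (for H\"older to yield a strictly positive $X$-moment exponent strictly less than one) are each invoked at the right step---this is precisely the reason for the hypothesis $\nu > \max(\alpha+1,2)$. The parameter condition $1+\gamma > \alpha + 2\beta$ plays no direct role in the proof of~\eqref{eq:moment-iteration} itself; it becomes relevant only downstream, when this recursion is iterated to drive the moment exponent down to $\nu\chi + \eps$ as required by Proposition~\ref{prop:nu-moments}.
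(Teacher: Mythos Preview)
Your proposal is correct and follows essentially the same approach as the paper: iterate Corollary~\ref{cor:confinement-bound} for~\eqref{eq:exponential-confinement}, combine the two cases of Lemma~\ref{lem:x-increments}, then apply H\"older with exponents $\nu/(\nu-\alpha-1)$ on the $X$-factor and $\nu/(\alpha+1)$ on the $Y$-factor together with Lemma~\ref{lem:Y-moments}, and finish with Lyapunov/Jensen for the $X_n^{\nu-2}$ term; your observation that the hypothesis $1+\gamma>\alpha+2\beta$ is not actually used in this lemma is also accurate. One small bookkeeping caveat: for $\alpha<0$ the $\{X_n<1\}$ contribution to $\Exp[X_n^{\nu-1}\kappa_n(Z_n)]$ is not a pure additive constant but rather $O\bigl((1+n)^{\gamma/2-\beta}\bigr)$ (it still carries $\Exp|Y_n|^\gamma$), though this is easily absorbed into the first term on the right of~\eqref{eq:moment-iteration}---the paper's own proof is equally brief at the same step, writing $X_n^{\nu-1}(1+X_n)^{-\alpha}\leq X_n^{\nu-\alpha-1}$ without comment.
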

\begin{proof}
 Corollary~\ref{cor:confinement-bound} 
shows that $X_{n+1} \leq C \max (1, X_n)$, for all $n \in \ZP$, and we may suppose that $C > 1$.
Then~\eqref{eq:exponential-confinement} follows by an induction,
since if $X_n \leq C^n \max (1,X_0)$,
then $X_{n+1} \leq C \max (1, C^n , C^n X_0 ) \leq C^{n+1} \max (1, X_0)$.
This verifies~\eqref{eq:exponential-confinement}, and, moreover, shows that $\Exp (X_n^\nu ) < \infty$ for every $\nu \in \RP$ and every $n \in \ZP$ (recall that $X_0$ is fixed).

    Let $\nu > \max ( \alpha +1, 2)$. 
Combining~\eqref{eq:mean-increment-nu-upper} and~\eqref{eq:mean-increment-nu-upper-small} shows that, for some $C < \infty$,
\begin{equation}
\label{eq:start-bound}
\Exp (X_{n+1}^{\nu} - X_n^{\nu} \mid \cF_n )
 \leq C X_n^{\nu -1} \kappa_n (Z_n)    + C  X_n^{\nu-2}  + C, \text{ for all } n \in \ZP.
\end{equation}
From~\eqref{eq:kappa-def} and H\"older's inequality, we obtain that for any  $p,q>1$ such that $p^{-1} + q^{-1} = 1$,
\begin{align*}
\Exp \bigl( X_n^{\nu -1} \kappa_n (Z_n) \bigr)
&\leq  \rho  (1+n)^{-\beta} \Exp \bigl( X_n^{\nu-\alpha -1}|Y_n|^\gamma \bigr) \\
&\leq \rho (1 + n)^{-\beta} \big(\Exp[X_n^{(\nu-\alpha -1)p}]\big)^{1/p} \bigl( \Exp [|Y_n|^{\gamma q}] \bigr)^{1/q}.
\end{align*}
Now by Lemma~\ref{lem:Y-moments} for every $q>0$ we have that $\Exp[|Y_n|^{\gamma q}]^{1/q} \leq A^{1/q}_{\gamma q} n^{\gamma/2}$ and therefore, for every $p>1$ there is $C>0$ for which we have
\[
\Exp \bigl( X_n^{\nu -1} \kappa_n (Z_n) \bigr)
\leq  C (1+n)^{\frac{\gamma}{2} - \beta}  \bigl( \Exp [X_n^{(\nu-\alpha -1)p}] \bigr)^{1/p} ,
\text{ for all } n \in \ZP.
\]
Since $\nu > 1+\alpha > 0$, we may (and do) take $p = \nu/(\nu - \alpha -1) >1$,
so that  
\begin{equation}
\label{eq:holder-induction}
\Exp \bigl( X_n^{\nu -1} \kappa_n (Z_n) \bigr)
\leq 
C (1+n)^{\frac{\gamma}{2} - \beta}  \bigl( \Exp [X_n^{\nu}] \bigr)^{\frac{\nu-\alpha -1}{\nu}} ,
\text{ for all } n \in \ZP.
\end{equation}
Using the bound~\eqref{eq:holder-induction} in~\eqref{eq:start-bound},
and taking expectations, 
 we obtain, for some $C < \infty$, 
\[
\Exp \big[X_{n+1}^{\nu} - X_n^{\nu}\big]
\leq 
C (1+n)^{\frac{\gamma}{2} - \beta}  \bigl( \Exp [X_n^{\nu}] \bigr)^{\frac{\nu-\alpha -1}{\nu}} 
+ C   \Exp ( X_n^{\nu -2} )  + C,
\]
and then using the bound 
$\bigl( \Exp ( X_n^{\nu -2} ) \bigr)^\nu \leq \bigl( \Exp ( X_n^{\nu} ) \big)^{{\nu-2}}$ for $\nu \geq 2$,
which follows from Jensen's inequality, we obtain~\eqref{eq:moment-iteration}.
\end{proof}

The next step is to find a starting value $\theta_0$
for the iteration scheme.

\begin{lemma} \label{lem:first-bound}
Suppose that $\alpha \in (-1,\infty)$,   $\beta, \gamma \in \RP$, and that~\eqref{ass:bounded-jumps} and~\eqref{ass:martingale} hold.
Suppose that $1 +\gamma  > \alpha + 2 \beta$. For every $\nu > \max (1+\alpha, 2)$, there exists $\theta_0 \in \RP$ for which 
  $\Exp ( X^{\nu}_n ) - \Exp (X^\nu_0 ) \leq C  n^{\theta_0}$ for all $n \in \ZP$.
\end{lemma}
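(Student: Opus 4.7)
The plan is to iterate the recursion~\eqref{eq:moment-iteration} from Lemma~\ref{lem:moment-iteration} and establish the claimed polynomial bound by induction on $n$. Write $m_n := \Exp(X_n^\nu)$, which is finite for every $n \in \ZP$ by the exponential-growth bound~\eqref{eq:exponential-confinement}; set $a_1 := (\nu-\alpha-1)/\nu \in (0,1)$ (positive because $\nu > 1+\alpha$) and $a_2 := (\nu-2)/\nu \in [0,1)$. With this notation, Lemma~\ref{lem:moment-iteration} reads
\[
 m_{n+1} - m_n \leq C(1+n)^{\gamma/2-\beta} m_n^{a_1} + C m_n^{a_2} + C .
\]

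First I would posit the ansatz $m_n \leq K(1+n)^{\theta_0}$ with constants $K$ and $\theta_0$ to be chosen, and try to propagate it inductively. Substituting the bound at index $n$ into the recursion and using $(n+2)^{\theta_0}-(1+n)^{\theta_0} \geq \theta_0(1+n)^{\theta_0-1}$ (valid whenever $\theta_0 \geq 1$), the inductive step reduces to requiring the three terms on the right-hand side to be dominated by $K\theta_0(1+n)^{\theta_0-1}$. Matching exponents of $(1+n)$ gives the binding condition $\gamma/2 - \beta + \theta_0 a_1 < \theta_0 - 1$, which a short algebraic manipulation---using the definition~\eqref{eq:chi-def} of $\chi$---shows is equivalent to $\theta_0 > \nu\chi$. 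The analogous condition for the $m_n^{a_2}$ term is $\theta_0 > \nu/2$, which is weaker because the hypothesis $1+\gamma > \alpha + 2\beta$ forces $\chi > 1/2$ (cf.\ Remark~\ref{rems:main-thm}\ref{rems:main-thm-i}); the constant term needs only $\theta_0 > 1$. So I would fix any $\theta_0 > \max(\nu\chi, 1)$ with a positive margin, say $\theta_0 = \nu\chi + 1$.

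The step I expect to be the main obstacle is reconciling the inductive step, which holds only for large $n$, with the base case. After dividing through by $K(1+n)^{\theta_0-1}$, the inductive step requires each term to be bounded by a constant times $K^{a_i - 1}$ (or $K^{-1}$) times a strictly negative power of $(1+n)$. Because $K^{a_i - 1} \leq 1$ for $K \geq 1$ (as $a_i < 1$), the three contributions can be made arbitrarily small uniformly in $K \geq 1$ by taking $n$ sufficiently large, so there exists a threshold $n_0$, depending only on the structural constants, past which the inductive step holds automatically. I would then choose $K$ large enough that the ansatz $m_n \leq K(1+n)^{\theta_0}$ already holds at each of the finitely many anchor indices $n \leq n_0$---possible since each such $m_n$ is finite---thereby closing the induction and delivering $m_n \leq K(1+n)^{\theta_0}$ for every $n \in \ZP$, which implies the stated bound.
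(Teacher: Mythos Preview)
Your argument is correct, and it takes a genuinely different route from the paper's proof. The paper proceeds by a bootstrap: starting from the exponential bound $f_n \leq \Lambda_0^n$ of~\eqref{eq:exponential-confinement}, it sums the recursion~\eqref{eq:dif-theta-contract-f} to obtain $f_n \leq D n^{a+1} \Lambda_0^{n\lambda}$, then feeds this back in repeatedly to show by induction on~$j$ that $f_n \leq D n^{(a+1)\sum_{i<j}\lambda^i}\Lambda_0^{n\lambda^j}$, and finally lets $j\to\infty$ so that the exponential factor disappears and the polynomial exponent converges to $(a+1)/(1-\lambda)$. Your approach instead posits the polynomial ansatz $m_n \leq K(1+n)^{\theta_0}$ directly and verifies it by a single forward induction on~$n$, the key observation being that for $\theta_0>\nu\chi$ the inductive step holds for all $n\geq n_0$ \emph{uniformly in $K\geq 1$} (because the $K$-dependence after rescaling is through $K^{a_i-1}\leq 1$), so $n_0$ can be fixed before $K$ is chosen to cover the finitely many anchor indices. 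Your method is shorter and more elementary, and as a bonus it shows that any $\theta_0>\nu\chi$ works, which anticipates the optimal exponent; the paper's iteration, by contrast, is more self-contained in spirit (it explicitly \emph{derives} a polynomial bound from the a~priori exponential one rather than guessing it) and yields the specific value $\theta_0=(a+1)/(1-\lambda)$.
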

\begin{proof}
Fix $\nu > \max (1+\alpha, 2)$.
We improve the exponential bound in~\eqref{eq:exponential-confinement} to obtain a polynomial bound.
For ease of notation, write $f_n := \Exp ( X_n^\nu )$,
which is finite by Lemma~\ref{lem:moment-iteration}.
Set $\lambda := \max\{(\nu-\alpha-1)/\nu, (\nu-2)/\nu\}$, which satisfies $\lambda \in (0,1)$
since $\nu > \max (1+\alpha, 2)$ and $\alpha > -1$. Then from~\eqref{eq:moment-iteration},
it follows that, for some $C < \infty$,
\begin{equation}
\label{eq:dif-theta-contract-f}
f_{n+1} - f_n \leq C  \bigl( 1 + n^a f_n^\lambda \bigr) , \text{ for all } n \in \ZP,
\end{equation}
where $a :=\max (\frac{\gamma}{2} -\beta,0 )$. 
It follows from~\eqref{eq:dif-theta-contract-f} that
\begin{equation}
\label{eq:base-step-1}
f_{n} - f_0 \leq \sum_{k = 0}^{n-1} \bigl( f_{k+1}-f_k \bigr) 
\leq \sum_{k =0}^{n-1} C \bigl(1 + k^a f_k^\lambda \bigr) \leq Cn +C n^{a+1} \max_{0 \leq k \leq n-1} f_k^\lambda.
\end{equation}
From~\eqref{eq:exponential-confinement} we have that 
there is $\Lambda_0>1$ (depending on $X_0$)
such that for all $n \in \ZP$,
$f_n \leq \Lambda_0^n$.
With this, we come back to~\eqref{eq:base-step-1} and obtain that,
for $D \geq f_0 + 2 C \in \RP$,
\begin{align}
\label{eq:base-step-2}
f_{n} &
\leq f_0 + C n + C n^{a+1} \Lambda_0^{n \lambda} \nonumber\\
& \leq D n^{a+1} \Lambda_0^{n\lambda}, \text{ for all } n \in \N.
\end{align}
Since $\lambda<1$, we may choose 
$D \geq \max ( 1, f_0 + 2 C )$
large enough so that
$(f_0 + 2 C )D^\lambda \leq D$;
we may then assume
that~$D$ in~\eqref{eq:base-step-2} has this property.
We  claim that for any $j, n \in \N$,
\begin{equation}\label{eq:j-bound}
f_n \leq D n^{(a +1)\sum_{i = 0}^{j-1}\lambda^i } \Lambda_0^{n \lambda^j}.
\end{equation}
The bound obtained in \eqref{eq:base-step-2} verifies the case $j =1$
of~\eqref{eq:j-bound}. For an induction,  assume that~\eqref{eq:j-bound}
holds for a given $j \in \N$. Then applying that bound
in~\eqref{eq:dif-theta-contract-f}, we obtain
\begin{align*}
f_{n} & \leq f_0 + C n + C n^{a+1} D^\lambda n^{\lambda (a+1) \sum_{i=0}^{j-1} \lambda^i} \Lambda_0^{n \lambda^{j+1}} \\
& \leq  \bigl( f_0 + 2 C \bigr)   D^\lambda n^{(a+1) \sum_{i=0}^{j} \lambda^i} \Lambda_0^{n \lambda^{j+1}} ,\end{align*}
using the fact that $D^\lambda n^{(a+1) \sum_{i=0}^{j} \lambda^i} \geq n \geq 1$. 
Since $(f_0 +2C) D^\lambda \leq D$,
we obtain the $j+1$ case of~\eqref{eq:j-bound}, completing the induction.
Since $\lambda <1$, we have $\sum_{i=0}^{j-1} \lambda^i \leq (1-\lambda)^{-1} < \infty$ for all~$j$, we conclude from~\eqref{eq:j-bound} that, if we set $\theta_0 := (a+1)(1-\lambda)^{-1}$,
\[ \sup_{n \in \N} \left[ \frac{1}{n} \log \left( \frac{f_n}
{D n^{\theta_0}} \right) \right] \leq \lambda^j  \log \Lambda_0 . \]
This holds for all $j \in \N$, and, since $\lambda <1$,
we can take $j \to \infty$ to obtain $f_n \leq D n^{\theta_0}$ for all $n \in \N$, as required.
\end{proof}

The iterative scheme for improving the exponent $\theta_0$
through a sequence of exponents~$\theta_k$ 
is defined via
two functions $F, G: \RP \to \R$  given by
\begin{align}
    \label{eq:kappa-recursion-F-G}
F (\theta) :=  \frac{\gamma}{2}-\beta + \left( \frac{\nu-1-\alpha}{\nu} \right) \theta , \text{ and } G (\theta ) :=   \left(\frac{\nu-2}{\nu} \right) \theta .
\end{align}
The following result is the basis for the success of the iteration scheme; its proof comes at the end of this section. Write $a \vee b := \max (a,b)$
for $a,b \in \R$.

\begin{lemma}
\label{lem:kappa}
Suppose that 
$\alpha > -1$, $\gamma \in \RP$ and $\beta \in \R$ are such that
$1 +\gamma  > \alpha + 2 \beta$ and that $\nu > \max (2, 1 + \alpha)$.
Let $\theta_0 > \nu \chi$, and define $\theta_k$, $k \in \N$, by the recursion
\begin{align}
    \label{eq:kappa-recursion}
\theta_{k+1} & := 1 + 
\bigl( F (\theta_k) \vee G (\theta_k ) \bigr), 
\text{ for } k \in \ZP, 
\end{align}
where $F, G: \RP \to \R$ are defined by~\eqref{eq:kappa-recursion-F-G}.
 Then, 
$\lim_{k \to \infty} \theta_k =  \nu \chi$.
\end{lemma}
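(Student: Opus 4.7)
The strategy is to recognise the iteration map $h(\theta) := 1 + \max(F(\theta), G(\theta))$ as a strict contraction on $[0,\infty)$ whose unique fixed point is $\nu\chi$; the Banach fixed-point theorem then yields $\theta_k \to \nu\chi$ for any starting value (in particular for $\theta_0 > \nu\chi$).

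First I would locate the candidate fixed point. Solving $\theta = 1 + F(\theta)$ gives $\theta \cdot \frac{1+\alpha}{\nu} = 1 + \frac{\gamma}{2} - \beta = \frac{2+\gamma-2\beta}{2}$, whence $\theta = \nu\chi$ by the definition~\eqref{eq:chi-def}. Solving $\theta = 1 + G(\theta)$ yields the other natural candidate $\theta = \nu/2$. The hypothesis $1+\gamma > \alpha + 2\beta$ together with $\alpha > -1$ forces $\chi > 1/2$ (cf.~Remarks~\ref{rems:main-thm}\ref{rems:main-thm-i}), so $\nu\chi > \nu/2$. Substituting into $F$ and $G$,
\[ F(\nu\chi) - G(\nu\chi) = (\nu\chi - 1) - (\nu-2)\chi = 2\chi - 1 > 0 , \]
so at $\theta = \nu\chi$ the map $F$ dominates and $h(\nu\chi) = 1 + F(\nu\chi) = \nu\chi$, confirming $\nu\chi$ is a fixed point of $h$. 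A parallel check shows that $\nu/2$ is \emph{not} a fixed point of $h$, again by $\chi > 1/2$.

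Next I would verify contractivity. Both $F$ and $G$ are affine with non-negative slopes $\frac{\nu-1-\alpha}{\nu}$ and $\frac{\nu-2}{\nu}$ respectively; the restrictions $\nu > \max(2, 1+\alpha)$ and $\alpha > -1$ place both slopes in $[0,1)$. The elementary inequality $|\max(a,c) - \max(b,d)| \leq \max(|a-b|, |c-d|)$ then shows that $h$ is $L$-Lipschitz on $[0,\infty)$ with
\[ L := \max\!\left( \tfrac{\nu-1-\alpha}{\nu},\; \tfrac{\nu-2}{\nu} \right) \in [0,1) . \]
Iterating
$|\theta_{k+1} - \nu\chi| = |h(\theta_k) - h(\nu\chi)| \leq L\, |\theta_k - \nu\chi|$
gives $|\theta_k - \nu\chi| \leq L^k |\theta_0 - \nu\chi| \to 0$, which is the conclusion.

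The only step demanding any thought is the inequality $F(\nu\chi) > G(\nu\chi)$: this is precisely where the hypothesis $1 + \gamma > \alpha + 2\beta$ enters (through $\chi > 1/2$) to ensure that it is the $F$-branch, and not the $G$-branch, that determines the fixed point of $h$. The remaining ingredients are a direct computation and a one-line contraction argument.
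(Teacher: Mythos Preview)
Your argument is correct and is in fact cleaner than the paper's own proof. The paper proceeds by hand: it first checks that $1+F(\nu\chi)=\nu\chi$, then shows that the map preserves the half-line $[\nu\chi,\infty)$, that $\theta_{k+1}\le\theta_k$ whenever $\theta_k\ge\nu\chi$, and that $\theta_{k+1}\le\theta_k-c_\alpha\eps$ whenever $\theta_k\ge\nu\chi+\eps$, from which convergence follows by a squeeze. You instead observe that $h(\theta)=1+\max(F(\theta),G(\theta))$ is a global contraction with Lipschitz constant $L=\max\bigl(\tfrac{\nu-1-\alpha}{\nu},\tfrac{\nu-2}{\nu}\bigr)<1$ and unique fixed point $\nu\chi$, and invoke Banach. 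Your route is shorter, yields an explicit geometric rate $|\theta_k-\nu\chi|\le L^k|\theta_0-\nu\chi|$, and incidentally shows convergence from \emph{any} initial $\theta_0$, not only $\theta_0>\nu\chi$. The one small point you might add for completeness is that $h$ is non-decreasing (being the maximum of two increasing affine maps), so from $\theta_0>\nu\chi$ all iterates remain $\ge\nu\chi>0$ and hence stay in the stated domain $\RP$ of $F,G$; alternatively, note that $F,G$ extend affinely to all of $\R$ and the contraction argument is unaffected.
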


 \begin{proof}[Proof of Proposition~\ref{prop:nu-moments}]
 An appeal to Jensen's inequality shows that   to
 prove~\eqref{eq:x-nu-moments} it suffices to suppose that $\nu > \max (2, 1 +\alpha)$. 
We will define a non-increasing sequence of positive exponents $\theta_0, \theta_1, \ldots$.
Suppose we know that $\Exp ( X^{\nu}_n ) - \Exp (X^\nu_0 ) \leq C_k n^{\theta_k}$
for some $C_k < \infty$ and all $n \in \N$; this holds with $k=0$ and some $\theta_0 > \chi \nu$, by Lemma~\ref{lem:first-bound}.
Then, from~\eqref{eq:moment-iteration} and the hypothesis on $\Exp ( X^{\nu}_n )$,
we obtain
\begin{equation}
    \label{eq:X-increment-bound-2}
\Exp \bigl( X_{n+1}^{\nu} - X_n^{\nu}   \bigr)
\leq C n^{\frac{\gamma}{2}-\beta + ( \frac{\nu-1-\alpha}{\nu} ) \theta_k}  + C n^{(\frac{\nu-2}{\nu}) \theta_k} + C.
\end{equation}
It follows from~\eqref{eq:X-increment-bound-2} that
 $\Exp ( X_n^{\nu} ) - \Exp ( X_0^\nu ) \leq C_{k+1} n^{\theta_{k+1}}$,
 where $C_{k+1} < \infty$ and $\theta_{k+1}$
 is given by~\eqref{eq:kappa-recursion}.
 Lemma~\ref{lem:kappa} shows
that $\lim_{k \to \infty} \theta_k =  \nu \chi$ and thus completes the proof.
\end{proof}

\begin{proof}[Proof of Lemma~\ref{lem:kappa}]
Note that, by~\eqref{eq:chi-def}, it holds that
\begin{equation}
    \label{eq:F-nu-chi}
    F ( \nu \chi ) 
    = \frac{\gamma}{2} -\beta  + \nu \chi - (1 +\alpha ) \chi
    =   \nu \chi - 1.
\end{equation}
We claim that 
\begin{equation}
    \label{eq:kappa-lower-bound}
    1 + \bigl( F(\theta ) \vee G(\theta) \bigr) \geq \nu \chi, \text{ whenever } \theta \geq \nu \chi.
\end{equation}
Indeed, since $\nu \geq 1+\alpha$ we have,
whenever $\theta \geq \nu \chi$,
$1 + F(\theta) \geq  1 + F (\nu\chi) = \nu \chi$, by~\eqref{eq:F-nu-chi}.
This verifies~\eqref{eq:kappa-lower-bound}. Consequently, started from $\theta_0 > \nu \chi$, 
\begin{equation}
    \label{eq:kappa-k-lower-bound}
    \theta_k \geq \nu \chi, \text{ for all } k \in \ZP.
\end{equation}
Next we claim that there is a constant $c_\alpha >0$ such that, for every $\eps \geq 0$,
\begin{equation}
    \label{eq:kappa-decreases}
    1 + \bigl( F(\theta ) \vee G(\theta) \bigr) \leq \theta - c_\alpha \eps, \text{ whenever } \theta \geq \nu \chi+\eps.
\end{equation}
Assume~\eqref{eq:kappa-decreases} for now;
combined with~\eqref{eq:kappa-k-lower-bound}, it follows from~\eqref{eq:kappa-decreases} that
\begin{equation}
    \label{eq:kappa-k-decreases}
    \theta_{k+1} \leq \theta_k, \text{ for every } k \in \ZP.
\end{equation}
Recall that the hypothesis $\gamma + 1 > \alpha + 2 \beta$
means that $\chi > 1/2$. If $\theta \geq \nu \chi +\eps$,
then 
\[ \nu \bigl( 1 + F(\theta) -\theta \bigr)
= \nu \left( 1 + \frac{\gamma}{2} -\beta \right) - (1+\alpha) \theta
=  (1+\alpha ) \bigl( \nu \chi - \theta ) \leq - \eps (1+\alpha). 
\]
Similarly, for $\theta \geq \nu \chi+\eps$,
\[ \nu \bigl( 1 + G(\theta ) - \theta \bigr)
= \nu - 2 \theta = (1-2\chi) \nu + 2 (\chi \nu - \theta) \leq - 2\eps, \]
since $\chi > 1/2$.
This verifies~\eqref{eq:kappa-decreases} with $c_\alpha := \min ({2/\nu}, {(1+\alpha)/\nu} ) \in (0,1)$. Fix $\eps>0$ and define $k_\eps := \inf \{ k \in \ZP : \theta \leq \nu \chi + \eps\}$.
It follows from~\eqref{eq:kappa-decreases} that $\theta_{k+1} \leq \theta_k- c_\alpha \eps$ whenever $k \leq k_\eps$, and hence $k_\eps \in \ZP$ is finite. 
Together with~\eqref{eq:kappa-k-decreases}, we have thus shown
that $\limsup_{k \to \infty} \theta_k \leq \nu \chi + \eps$.
Since $\eps>0$ was arbitrary, this means that
$\limsup_{k \to \infty} \theta_k \leq \nu \chi$.
Combined with~\eqref{eq:kappa-k-lower-bound}, we get the claimed result.
 \end{proof}

 \subsection{The lower bound}
 \label{sec:lower-bound}

Corollary~\ref{cor:X-limsup} gives the `$\limsup$' half of
Theorem~\ref{thm:main-thm}; it remains to verify the `$\liminf$' half.
This is the purpose of this section. Recall that $\alpha^+ = \max (0, \alpha)$.

\begin{proposition}
\label{prop:X-lower-bound}
Suppose that~\eqref{ass:bounded-jumps}, \eqref{ass:martingale}, and~\eqref{ass:ellipticity} hold.  Suppose that $\alpha \in (-1,\infty)$, and $\beta, \gamma \in \RP$ are such 
that $1 + \gamma > \alpha^+ + 2 \beta$.
Then 
\begin{equation}
    \label{eq:Xn-liminf}
\liminf_{n \to \infty} 
\frac{\log X_n}{\log n} \geq \chi, \as \end{equation}
\end{proposition}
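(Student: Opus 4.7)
The strategy is to decompose $X_n$ as a drift accumulator plus a zero-mean martingale $\tilde M_n$, lower-bound the former using Proposition~\ref{prop:|Y|-sum} (together with the upper bound from Corollary~\ref{cor:X-limsup}), and absorb the latter since $\chi > 1/2$. Writing $X_n = X_0 + A_n + \tilde M_n$ with $A_n := \sum_{m=0}^{n-1} \Exp(\Delta X_m \mid \cF_m)$, one has by~\eqref{eq:X-drift-bound} that $A_n \geq \sum_{m=0}^{n-1} \kappa_m(Z_m)$, while $\tilde M_n$ is a martingale with increments bounded by $2B$ (by~\eqref{ass:bounded-jumps}), so $\tilde M_n = o(n^{1/2+\eta})$ almost surely for every $\eta > 0$ (via Azuma--Hoeffding and Borel--Cantelli, or the martingale techniques of Appendix~\ref{sec:one-dimensional-bounds}). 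By Remarks~\ref{rems:main-thm}\ref{rems:main-thm-i}, $\chi > 1/2$, so it will suffice to show that, for every $\eps > 0$, $\sum_{m=1}^{n} \kappa_m(Z_m) \geq n^{\chi - \eps}$ eventually, a.s.

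For $\alpha \geq 0$, Corollary~\ref{cor:X-limsup} yields $X_m \leq m^{\chi+\eps_1}$ eventually a.s., hence $(1+X_m)^\alpha \leq C m^{\alpha(\chi+\eps_1)}$ and so $\kappa_m(Z_m) \geq c |Y_m|^\gamma m^{-\alpha(\chi+\eps_1)-\beta}$. Proposition~\ref{prop:|Y|-sum} applied with exponent $\beta' := \alpha(\chi+\eps_1) + \beta$, whose admissibility $2\beta' \leq 2+\gamma$ follows from the identity $\chi(1+\alpha) = 1+\tfrac{\gamma}{2}-\beta$ combined with $\chi > 0$, then gives $\sum_{m=1}^n \kappa_m(Z_m) \geq n^{\chi - \alpha\eps_1 - \eps_2}$ eventually a.s., for every $\eps_2 > 0$. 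Letting $\eps_1, \eps_2 \downarrow 0$ completes this case.

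For $-1 < \alpha < 0$ the upper bound on $X_m$ is useless, and we bootstrap. The trivial bound $(1+X_m)^{-\alpha} \geq 1$ together with Proposition~\ref{prop:|Y|-sum} (whose admissibility follows from $1+\gamma > 2\beta$) gives $X_n \geq n^{\theta_0 - \eta}$ eventually a.s.\ for every $\eta > 0$, with $\theta_0 := 1+\tfrac{\gamma}{2}-\beta$. Inductively, if $X_n \geq n^{\theta - \eta}$ eventually a.s., then $(1+X_m)^{-\alpha} \geq m^{-\alpha(\theta-\eta)}$ for $m$ large (as $-\alpha > 0$), whence $\kappa_m(Z_m) \geq c |Y_m|^\gamma m^{-\alpha(\theta-\eta)-\beta}$, and Proposition~\ref{prop:|Y|-sum} yields the improved exponent $\theta' := 1+\tfrac{\gamma}{2} - \alpha\theta - \beta$. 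The resulting recursion $\theta_{k+1} = 1+\tfrac{\gamma}{2} - \alpha\theta_k - \beta$ has unique fixed point $\chi$ with $\theta_{k+1}-\chi = -\alpha(\theta_k-\chi)$; since $|\alpha| < 1$, $\theta_k \nearrow \chi$. Taking a countable intersection over $k$ of the resulting a.s.\ events gives $\liminf_{n \to \infty} \log X_n / \log n \geq \theta_k$ a.s.\ for every $k$, hence $\geq \chi$.

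The main delicate point is the $\alpha < 0$ bootstrap: the error tolerance $\eta$ must be managed across iterations so that only countably many null sets are excluded, but the geometric contraction rate $|\alpha| \in (0,1)$ ensures the accumulated slack vanishes and the full exponent $\chi$ is attained in the limit. The $\alpha \geq 0$ case avoids the bootstrap entirely because the $\limsup$ bound from Corollary~\ref{cor:X-limsup} already supplies a suitable upper bound on $X_m$ with the correct exponent.
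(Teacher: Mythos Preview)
Your plan is correct and follows essentially the same route as the paper: control the fluctuation part at rate $n^{1/2+o(1)}$, then lower-bound the drift accumulator via Proposition~\ref{prop:|Y|-sum}, using Corollary~\ref{cor:X-limsup} when $\alpha\geq 0$ and a geometric bootstrap $\theta_{k+1}=1+\tfrac{\gamma}{2}-\beta-\alpha\theta_k\to\chi$ when $-1<\alpha<0$. The only cosmetic difference is that the paper decomposes $X_n-X_0=\sum_{m<n}\kappa_m(Z_m)+\Xi_n$ with $\Xi_n=\sum_{m<n}\xi^{(1)}_{m+1}$ a \emph{submartingale} (controlled via a second-moment bound in Proposition~\ref{prop:mart-ab}), whereas you take the genuine Doob decomposition with a bounded-increment martingale; both give the needed $o(n^{1/2+\eta})$ control.
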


Define  processes $A := (A_n, n \in \ZP)$ and $\Xi:= (\Xi_n, n \in \ZP)$ on $\R$ by $A_0 = \Xi_0 = 0$, 
and
\begin{equation}
\label{eq:An-Mn-def}
A_n:=  \sum_{m=0}^{n-1} \kappa_m (Z_m) , ~\text{and}~
\Xi_n := \sum_{m=0}^{n-1}  \xi^{(1)}_{m+1} , \text{ for } n \in \N.
\end{equation}
From~\eqref{eq:X-increment} and the fact that $A_n \geq 0$, we then observe that
\begin{equation}
    \label{eq:doob-concrete}
X_n - X_0 = A_n + \Xi_n \geq \Xi_n, \text{ for all } n \in \ZP;
\end{equation}
while~\eqref{eq:doob-concrete} is reminiscent of the 
Doob decomposition for the submartingale~$X$,
and the previsible process $A$ is non-decreasing, 
the process $\Xi$
is itself a submartingale, by condition~\eqref{eq:xi-martingale}.
The next result controls the size of $\Xi_n$ as defined at~\eqref{eq:An-Mn-def}.

\begin{proposition}
\label{prop:mart-ab}
Suppose that~\eqref{ass:bounded-jumps} and~\eqref{ass:martingale} hold. 
Then  
 \[ \limsup_{n \to \infty} \frac{\log \max_{0 \leq m \leq n} |\Xi_m|}{\log n} \leq \frac{1}{2}, \as\]
\end{proposition}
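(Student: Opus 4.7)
The plan is to apply the Doob decomposition $\Xi_n = M_n + C_n$, where
\[
M_n := \sum_{m=0}^{n-1} \bigl( \xi^{(1)}_{m+1} - \Exp(\xi^{(1)}_{m+1} \mid \cF_m) \bigr), \qquad C_n := \sum_{m=0}^{n-1} \Exp(\xi^{(1)}_{m+1} \mid \cF_m).
\]
By~\eqref{ass:bounded-jumps} and~\eqref{ass:martingale}, $M$ is a martingale with differences bounded by $2B$, and $C$ is non-decreasing with $\Delta C_m \leq B \1{X_m \leq B}$; in particular, $\Delta C_m = 0$ whenever $X_m > B$. I will show separately that $\max_{m \leq n} |M_m|$ and $\max_{m \leq n} C_m$ are both $O(n^{1/2+\eps})$ a.s., for any $\eps > 0$, from which the proposition follows.

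Controlling $M$ is routine: Doob's $L^2$ maximal inequality gives $\Exp \max_{m \leq n} M_m^2 \leq 4 \Exp M_n^2 \leq 16 B^2 n$, so Markov's inequality applied along the subsequence $n_k := \lceil k^{1/\eps} \rceil$, combined with the Borel--Cantelli lemma, yields $\max_{m \leq n_k} |M_m| \leq n_k^{1/2+\eps}$ eventually, a.s.; the interpolation to arbitrary $n$ is standard since $n_{k+1}/n_k \to 1$.

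The crux is the bound on $C$. For each $m \leq n$, let $T_m := \max\{k \leq m : X_k \leq B\}$ (if the set is empty then $C_m = 0$ and there is nothing to prove). Since $\Delta C_k = 0$ for $k \in (T_m, m-1]$ by definition of $T_m$, only the first $T_m+1$ summands contribute to $C_m$, and so $C_m = C_{T_m + 1} \leq C_{T_m} + B$ using the uniform bound on $\Delta C_{T_m}$. Now~\eqref{eq:doob-concrete} reads $X_{T_m} = X_0 + A_{T_m} + M_{T_m} + C_{T_m}$, and combining $X_{T_m} \leq B$ with $X_0 \geq 0$ and $A_{T_m} \geq 0$ forces $C_{T_m} \leq B - M_{T_m} \leq B + \max_{k \leq n} |M_k|$. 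Hence $C_m \leq 2B + \max_{k \leq n}|M_k|$ uniformly in $m \leq n$, and therefore $\max_{m \leq n} |\Xi_m| \leq 2B + 2 \max_{k \leq n} |M_k| = O(n^{1/2+\eps})$ a.s., which gives the conclusion.

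The main obstacle is that the only deterministic estimate for $C$ coming out of~\eqref{ass:martingale} alone is the useless $C_n \leq Bn$; the diffusive bound must instead be extracted by coupling $\Xi$ with $X$ through~\eqref{eq:doob-concrete}, exploiting the fact that $C$ can only grow while $X$ is trapped in $[0,B]$ and that, at any such time, $C$ cannot exceed $B - M$. Once this observation is in place, everything reduces to the standard martingale maximal inequality for $M$.
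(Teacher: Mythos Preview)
Your proof is correct and takes a genuinely different route from the paper's. The paper bounds $\Exp[\Xi_n^2]$ directly in one step: expanding $\Exp[\Xi_{n+1}^2-\Xi_n^2\mid\cF_n]$ and using the relation $\Xi_n\leq X_n$ from~\eqref{eq:doob-concrete} to control the cross term $2\Xi_n\Exp[\xi^{(1)}_{n+1}\mid\cF_n]\leq 2B\Xi_n\1{X_n\leq B}\leq 2B^2$, then applying Doob's maximal inequality to~$\Xi_n^2$ along a geometric subsequence. You instead perform the Doob decomposition $\Xi=M+C$, handle the martingale part~$M$ by the standard $L^2$ estimate, and bound the compensator~$C$ pathwise via a last-visit argument that exploits the same key inequality $\Xi\leq X$ in the form $C_{T_m}\leq B-M_{T_m}$. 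Both arguments ultimately rest on $\Xi_n\leq X_n$, but yours yields the pathwise bound $C_n\leq 2B+\max_{k\leq n}|M_k|$ rather than a second-moment bound; this is slightly more informative and also sidesteps the need to verify that $\Xi_n^2$ is itself a submartingale (which is not automatic, since $\Xi$ is only a submartingale and can be negative). One minor indexing remark: when $T_m=m$ your equality $C_m=C_{T_m+1}$ should read $C_m\leq C_{T_m+1}$, but the subsequent bound still goes through since $X_{T_m}=X_m\leq B$.
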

\begin{proof}
From the definition of $\Xi_n$ at~\eqref{eq:An-Mn-def} and~\eqref{eq:xi-jump-bound} and~\eqref{eq:xi-martingale}, we have
\begin{align}
\label{eq:martingale-square-increment}
\Exp \bigl[ \Xi_{n+1}^2-\Xi_n^2 \bigmid \cF_{n} \bigr] & =
\Exp \bigl[ ( \Xi_{n+1} - \Xi_n)^2 \bigmid \cF_n \bigr] 
+ 2 \Xi_n \Exp  \bigl[ \Xi_{n+1}-\Xi_n \bigmid \cF_{n} \bigr] \nonumber\\ 
& \leq B^2 + 2 B \Xi_n \1{ X_n \leq B} \leq 3 B^2, \as,
 \end{align}
 since $\Xi_n \leq X_n$.
Taking expectations in~\eqref{eq:martingale-square-increment} and using the fact that $\Xi_0 = 0$, we  obtain
\begin{equation}
    \label{eq:M-qv-bound}
 \Exp (\Xi_n^2)
= \sum_{m=0}^{n-1} \Exp ( \Xi_{m+1}^2 - \Xi_m^2 )
\leq 3 B^2 n, \text{ for all } n\in \ZP. \end{equation}
Since $\Xi_n^2$ is a non-negative submartingale,
Doob's maximal inequality together with~\eqref{eq:M-qv-bound}
implies that, for every $\eps >0$ and every $n \in \ZP$,
\[ \Pr \left( \max_{0 \leq m \leq n} | \Xi_m | \geq n^{(1/2)+\eps} \right) 
\leq n^{-1-2\eps} \Exp ( \Xi_n^2 ) = O ( n^{-2\eps} ) .\]
In particular, the Borel--Cantelli lemma shows that there are only finitely many $k \in \ZP$ such that
$\max_{0 \leq m \leq 2^k} | \Xi_m | \geq (2^k)^{(1/2)+\eps}$. Any $n \in \N$ has $2^{k_n} \leq n < 2^{k_n+1}$
with $k_n \to \infty$ as $n \to \infty$, so, for all but finitely many $n \in \ZP$,
\[ \max_{0 \leq m \leq n} | \Xi_m | \leq \max_{0 \leq m \leq 2^{k_n+1}} | \Xi_m | \leq  (2 \cdot 2^{k_n})^{(1/2)+\eps} \leq ( 2 \cdot n)^{(1/2)+\eps} .\]
Since $\eps>0$ was arbitrary, this completes the proof.
\end{proof}

\begin{proof}[Proof of Proposition~\ref{prop:X-lower-bound}]
Let $\beta , \gamma \in \RP$.
First suppose that $\alpha \geq 0$. Let $\eps >0$.
Then the upper bound 
$X_n \leq n^{\chi+\eps}$, for all but finitely many~$n \in \N$, a.s.,
established in Corollary~\ref{cor:X-limsup}, together with~\eqref{eq:kappa-def}, shows that
\[ \kappa_n (Z_n) = \frac{\rho |Y_n|^\gamma}{(1+n)^\beta (1+X_n)^{\alpha}}
\geq \frac{\rho |Y_n|^\gamma}{(1+n)^{\beta+\alpha \chi + \alpha \eps}} ,
\]
for all but finitely many $n$. It follows that, for some $c>0$ and all $n \in \N$ large enough,
\begin{equation}
    \label{eq:A-lower-alpha-nonneg}
 A_n = \sum_{m=0}^{n-1} \kappa (Z_m ) \geq  \rho {(1+ n)}^{-\alpha (\chi+\eps)} \sum_{m=0}^{n-1} (1+m)^{-\beta} | Y_m |^\gamma \geq c n^{ \chi -  \alpha \eps -\eps} ,
\end{equation}
by~\eqref{eq:chi-def} and~\eqref{eq:|Y|-limit}. By hypothesis,
$(1+\alpha) \chi = 1 + \frac{\gamma}{2} - \beta > \frac{1+\alpha}{2}$,
so that $\chi > 1/2$; since $\eps >0$ was arbitrary, we can combine~\eqref{eq:doob-concrete} and~\eqref{eq:A-lower-alpha-nonneg} with 
the bound on $\Xi_n$
from Proposition~\ref{prop:mart-ab} to deduce~\eqref{eq:Xn-liminf}.

Next suppose that $\alpha \in (-1,0)$ and $1 + \gamma > 2 \beta$. 
We will prove, by an induction on $k \in \N$,
 that
\begin{equation}
    \label{eq:Xn-liminf-induction}
\liminf_{n \to \infty}
\frac{\log X_n}{\log n} \geq \chi_k, \as, \end{equation}
 where, using~\eqref{eq:chi-def},  we write
 \begin{equation}
     \label{eq:chi-k-def}
     \chi_k := \left( 1 + \frac{\gamma}{2} - \beta \right) \sum_{\ell=0}^{k-1} (-\alpha)^\ell 
     = \chi \bigl( 1 - (-\alpha)^k \bigr ), \text{ for } k \in \N.
 \end{equation}
 By~\eqref{eq:chi-k-def} and the fact that $\alpha \in (-1,0)$, for all $k \in \N$ it holds that
$\chi_{k+1} > \chi_k$, while, by~\eqref{eq:chi-def},
 $\chi_1 = (1+\alpha) \chi = 1 + \frac{\gamma}{2} - \beta > 1/2$, by the hypothesis $1+\gamma > 2 \beta$.
 Hence $\chi_k > 1/2$ for all $k \in \N$.
 
 To start the induction, note that, since $\alpha < 0$,
$\kappa_n (x,y) \geq \rho (1+n)^{-\beta} y^\gamma$, and so
\[ A_n \geq \rho \sum_{m=0}^{n-1} (1+m)^{-\beta} |Y_m|^\gamma .\]
By Proposition~\ref{prop:|Y|-sum}, this shows that, for every $\eps >0$, a.s.,
$A_n \geq n^{\chi_1 - \eps}$ for all but finitely many $n \in \N$,
where $\chi_1 = 1 + \frac{\gamma}{2} - \beta$ as given by the $k=1$ case of~\eqref{eq:chi-k-def}.
Since $\chi_1 > 1/2$, and $\eps >0$ was arbitrary, we can combine this with 
the bound on $\Xi_n$
from Proposition~\ref{prop:mart-ab} to see that, for every $\eps >0$, a.s.,
$X_n \geq n^{\chi_1 - \eps}$ for all but finitely many $n \in \N$. This verifies~\eqref{eq:Xn-liminf-induction}
for $k=1$.

 For the inductive step, suppose that~\eqref{eq:Xn-liminf-induction} holds for a given $k \in \N$.
Then, for every $\eps \in (0,\chi_k)$, $X_n \geq n^{\chi_k - \eps}$
for all but finitely many~$n \in \N$.
Hence
\[ \kappa_n (Z_n) = \frac{\rho |Y_n|^\gamma}{(1+n)^\beta} (1+X_n)^{|\alpha|}
\geq \frac{\rho |Y_n|^\gamma}{(1+n)^{\beta}} n^{|\alpha| ( \chi_k -\eps)} .
\]
By Proposition~\ref{prop:|Y|-sum}, applied with $\beta + \alpha (\chi_k - \eps)$ in place of $\beta$, 
we obtain
\[ A_n = \sum_{m=0}^{n-1} \kappa (Z_m ) \geq  \rho  \sum_{m=0}^{n-1} (1+m)^{|\alpha| (\chi_k - \eps) -\beta} | Y_m |^\gamma \geq n^{1 + \frac{\gamma}{2} - \beta - \alpha \chi_k + \alpha \eps - \eps}, \]
for all but finitely many $n \in \N$, a.s. That is,
\begin{align*} \liminf_{n \to \infty} \frac{\log A_n}{\log n} & \geq 1 + \frac{\gamma}{2} - \beta - \alpha \chi_k + \alpha \eps - \eps \\
& = (1 +\alpha) \chi - \alpha \bigl( 1 - (-\alpha)^k \bigr) \chi + \alpha\eps - \eps  = \chi_{k+1}  + \alpha \eps - \eps ,\end{align*}
by~\eqref{eq:chi-k-def} and~\eqref{eq:chi-def}. Since $\chi_{k+1} > 1/2$, and $\eps>0$ was arbitrary, we can combine this with 
the bound on $\Xi_n$
from Proposition~\ref{prop:mart-ab} to verify the $k+1$ case of~\eqref{eq:Xn-liminf-induction}.

We have established that~\eqref{eq:Xn-liminf-induction} holds for arbitrary $k \in \N$. Moreover, 
by taking $k \to \infty$ in~\eqref{eq:chi-k-def} and using the fact that $|\alpha|<1$ and~\eqref{eq:chi-def}, 
we obtain $\lim_{k \to \infty} \chi_k = \chi$, and hence deduce~\eqref{eq:Xn-liminf}
from~\eqref{eq:Xn-liminf-induction}.
\end{proof}

\begin{proof}[Proof of Theorem~\ref{thm:main-thm}]
The theorem combines the `$\limsup$' result from Corollary~\ref{cor:X-limsup}
with the `$\liminf$' result from Proposition~\ref{prop:X-lower-bound}.    
\end{proof}

\section{A barycentric excluded-volume random walk}
\label{sec:barycentre}

\subsection{A brief history}

The model $(W_n,G_n)$
described informally in Section~\ref{sec:intro}
dates back to 2008, when
some of the present authors
discussed a number of self-interacting walk models with Francis Comets.
The common feature of these models is they are random walk models with self-interaction
mediated by some global geometric functionals of the past trajectory, or, equivalently, its
occupation times. The three models we discussed with Francis are as follows.
\begin{itemize}
\item The paper~\cite{cmvw} studied a random walk
that is either attracted to or repelled by the centre of mass of its previous trajectory, with an interaction
strength that decays with distance.
\item The random walk that avoids its convex hull had been introduced by
 Angel, Benjamini, and Vir\'ag, 
and studied in~\cite{abv,zern}.
The original model is conjectured to be ballistic (and this conjecture is not yet fully settled),
but~\cite{cmw} 
introduced a version of the model which avoids not the full convex hull, but the convex hull of the starting point and the most recent~$k$ locations, and established that the finite-$k$ model is ballistic.
\item The barycentric excluded-volume model $(W_n, G_n)$ that we discuss here, and for which the behaviour remains entirely open. The heuristic identification of the~$3/4$ exponent presented in Section~\ref{sec:intro} was sketched
out in 2008--9 with Francis and Iain MacPhee.
\end{itemize}

\subsection{An heuristic link between the two models}
\label{sec:heuristic}

For $n \in \ZP$, let $\cF_n$ be the $\sigma$-algebra $\sigma (W_0, \ldots , W_n)$,
so that both $W_n$ and $G_n$ are $\cF_n$-measurable, as is $T_n := W_n - G_n$.

If $\| W_n\| \| T_n \| > 0$, define 
\[ v_n := \frac{\hW_n + \hT_n}{\bigl\|\hW_n + \hT_n\bigr\|} ,\]
where $\widehat{u}: = u/\|u\|$,
and set $v_n := 0$ otherwise. Let $v_n^\perp$ be any unit vector with $v_n \cdot v_n^\perp = 0$.
If $\| W_n\| \| T_n \| > 0$, define
\[ \beta_n := \frac{1}{2} \arccos \left( \frac{ W_n \cdot T_n}{ \| W_n\| \| T_n \|} \right) ,\]
and set $\beta_n := 0$ otherwise. Here, the (principal branch of the) arc-cosine function $\arccos : [-1,1] \to [0,\pi]$ is given by
\begin{equation}
    \label{eq:arccos}
 \arccos \lambda := \int_\lambda^1 \frac{\ud t}{\sqrt{1-t^2}}  , \text{ for } -1 \leq \lambda \leq 1. \end{equation}
Define $\cC_n \subset \R^2$ as
\[ \cC_n := \left\{ z \in \R^2 :  {(z - W_n)} \cdot v_n < -  \beta_n \| z - W_n \|  \right\}. \]
If $\| W_n\| \| T_n \| > 0$, then $\cC_n$ is a (non-degenerate, open) cone with apex $W_n$, boundary given by semi-infinite rays from $W_n$ in directions
$-W_n$ and $-T_n$, 
and angular span~$2 \beta_n$; if $\| W_n \| \|T_n \| =0$, then $\cC_n = \emptyset$. See Figure~\ref{fig:flory-XY} for a picture.

 We take the distribution of $W_{n+1}$ given
   $\cF_n$ to be uniform
   on the unit-radius circle centred at $W_n$
   excluding the arc intersecting~$\cC_n$. 
   More formally, we can generate the process via
   a sequence $U_1, U_2, \ldots$ of independent $U[-1,1]$ variables as follows. Given $W_n$ and $G_n$,
   set
  \begin{align*} W_{n+1} - W_n = v_n  \cos \bigl( (\pi - \beta_n ) U_{n+1} \bigr) 
  + v_n^\perp \sin \bigl( (\pi -  \beta_n  ) U_{n+1} \bigr) , \text{ for all } n \in \ZP. \end{align*}

So far analysis of this model has eluded us. As described in Section~\ref{sec:intro},
we have heuristic and numerical evidence in support of the following.

	\begin{conjecture}
	\label{conj:scaling}
For the $(W_n, G_n)$ model, the asymptotics at~\eqref{eq:flory-scaling} hold.
	\end{conjecture}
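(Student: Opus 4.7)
The plan is to reduce Conjecture~\ref{conj:scaling} to (a variant of) Theorem~\ref{thm:main-thm} by identifying, within the $(W_n,G_n)$ dynamics, an effective pair of coordinates $(X_n,Y_n)$ satisfying assumptions~\eqref{ass:bounded-jumps}--\eqref{ass:ellipticity} with parameters $\alpha=\gamma=1$, $\beta=0$; this is precisely the parameter choice of Example~\ref{ex:34}, which forces $\chi=3/4$. Concretely, I would (conditionally on transience and a limit direction $\Theta$) project $W_n$ onto $\Theta$ to obtain a scalar $X_n$, and onto $\Theta^\perp$ to obtain $Y_n$. Since the increment $W_{n+1}-W_n$ is uniform on an arc of angular span $2\pi-2\beta_n$, a direct computation gives a radial drift of magnitude $\Theta(\sin\beta_n)=\Theta(\beta_n)$ pointing (roughly) in direction~$v_n$, while the transverse component is a bounded, mean-zero, uniformly elliptic innovation. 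If $0$, $G_n$, $W_n$ were exactly collinear with $\|W_n\|\gg |Y_n|$, then $v_n\approx\widehat{W}_n$ and $\beta_n\approx |Y_n|/\|W_n\|\approx |Y_n|/X_n$, which recovers precisely $\kappa_n(Z_n)=|Y_n|/(1+X_n)$.

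The first step would therefore be to prove that the direction $\widehat{W}_n$ converges almost surely to some random $\Theta\in S^1$. The natural strategy is to show that the angle process has increments which are approximately of martingale type with quadratic variation summable in $n$; heuristically, once $\|W_n\|\approx n^\chi$ with $\chi>1/2$ the angular drift scales like $n^{-\chi}$, whose square is summable since $\chi>1/2$, and angular noise is of order $n^{-\chi}$, so a Borel--Cantelli/martingale convergence argument should yield $\widehat{W}_n\to\Theta$ with $\Theta$ uniform on~$S^1$ by symmetry. The second step is to control the barycentre: iterating $G_{n+1}=\tfrac{n}{n+1}G_n+\tfrac{1}{n+1}W_{n+1}$, together with a self-consistent assumption $\|W_n\|\asymp n^\chi$, should show that $G_n$ lies within $o(n^\chi)$ of the segment $[0,cn^\chi\Theta]$ for some deterministic $c$ (the value $4/7$ coming, as remarked in the paper, from the ODE $g'(t)=tg(t)^{\chi-1}/\chi$ satisfied by the normalized barycentre trajectory). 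With alignment established, the transverse displacement of $G_n$ from the line $\mathbb{R}\Theta$ is negligible compared to that of $W_n$, so $\beta_n\asymp |Y_n|/X_n$ up to lower order.

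Once these two ingredients are in place, one packages $Z_n=(X_n,Y_n)$, verifies bounded jumps and vertical ellipticity directly from the arc-uniform increment law, and verifies the submartingale/drift bound \eqref{eq:X-drift-bound} with $\kappa_n(z)\propto|y|/(1+x)$. Theorem~\ref{thm:main-thm} then gives $\log X_n/\log n\to 3/4$ almost surely, and since $|Y_n|$ is diffusive (Proposition~\ref{prop:|Y|-sum}), one concludes $\log\|W_n\|/\log n\to 3/4$ and $\widehat{W}_n\to\Theta$, i.e.~\eqref{eq:flory-scaling}.

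The decisive obstacle is the circular dependency in step two: the very existence of a limit direction is a statement about the long-time behaviour of $W$, yet my coordinate decomposition, and the applicability of Theorem~\ref{thm:main-thm}, presuppose it. A rigorous execution probably requires a bootstrap/multi-scale argument: first show a weak lower bound $\|W_n\|\geq n^{1/2+\eta}$ via a soft argument (perhaps a Lyapunov function of the form $\|W_n\|^2$ together with an average lower bound on $\beta_n$), then upgrade by iterating as in Section~\ref{sec:X-asymptotics}, while simultaneously propagating angular stability. Controlling $G_n$ is what couples the entire past of the walk into the present dynamics and breaks the Markov decoupling used for the $(X,Y)$ model; this coupling seems to be the fundamental reason the conjecture has remained open, and any proof will likely need a robust quantitative version of the heuristic $G_n\approx cW_n$ that survives the feedback between radial growth and angular drift.
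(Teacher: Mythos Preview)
The statement you are attempting to prove is labelled a \emph{Conjecture} in the paper, not a theorem; the paper does not prove it. Indeed, immediately preceding Conjecture~\ref{conj:scaling} the authors write ``So far analysis of this model has eluded us,'' and Section~\ref{sec:heuristic} offers only an heuristic link to Example~\ref{ex:34}, not a rigorous argument. There is therefore no ``paper's own proof'' against which to compare your proposal.

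Your outline recapitulates exactly the heuristic the paper already sketches in Section~\ref{sec:heuristic}: symmetrize the increment law so that the drift aligns with $\widehat W_n$, set $X_n=\|W_n\|$ and $|Y_n|=\|W_n\|\tan(2\beta_n)$, and observe that for small $\beta_n$ the drift is approximately $\tfrac{2}{\pi}|Y_n|/(1+X_n)$, i.e.\ the $\alpha=\gamma=1$, $\beta=0$ case of Theorem~\ref{thm:main-thm}. You correctly identify the two genuine obstructions that prevent this heuristic from being a proof: (i) the existence of a limiting direction $\Theta$ is assumed in order to define the $(X,Y)$ coordinates, yet it is part of what must be proved; and (ii) the barycentre $G_n$ encodes the entire past trajectory, so the pair $(X_n,Y_n)$ is not Markov and there is no a~priori reason why $Y_n$ should satisfy the martingale hypothesis~\eqref{ass:martingale} or why $\beta_n\asymp |Y_n|/X_n$ should hold uniformly.

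Your diagnosis of the circular dependency and of the coupling through $G_n$ is accurate and matches the paper's own assessment that the problem is open. But the proposal is a programme, not a proof: the bootstrap you sketch (weak lower bound $\|W_n\|\geq n^{1/2+\eta}$, then iterate while propagating angular stability) is plausible but none of its steps is carried out, and the hardest one---quantitative control of $G_n$ relative to $W_n$ without first knowing the scaling exponent---is precisely the missing idea. As it stands, the proposal neither closes the gap nor offers a new mechanism beyond what the paper already presents heuristically.
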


A calculation shows that
\[ \Exp ( W_{n+1} - W_n \mid \cF_n ) = \frac{2 \sin \beta_n}{\pi - \beta_n} v_n .\]
Now, it is slightly more convenient for comparison with the model in Section~\ref{sec:model}
to ``symmetrize'' the process $W$ so that the support of the increment excludes
not only the cone $\cC_n$, but also
its reflection through the line from $0$ to $W_n$. The upshot of this is to double that excluded angle, and to ensure that the mean drift is in the $\hW_n$ direction:
\[ \Exp ( W_{n+1} - W_n \mid \cF_n ) = \frac{2 \sin (2 \beta_n)}{\pi - 2\beta_n} \hW_n .\]
Now define
\[ X_n = \| W_n \|, \text{ and } | Y_n|  = \| W_n \| \tan (2 \beta_n) ; \]
see Figure~\ref{fig:flory-XY}.
Most of the time, we expect $\beta_n$ to be very small (we believe that  $\lim_{n \to \infty} \beta_n = 0$, a.s.)
and so $\sin (2\beta_n) \approx 2 \beta_n \approx \tan (2 \beta_n)$.
Hence we claim that, roughly speaking,
\begin{equation}
    \label{eq:34-approx}
 \Exp ( W_{n+1} - W_n \mid \cF_n ) \approx \frac{2}{\pi} \frac{|Y_n|}{1+X_n} \hW_n .\end{equation}
 The analogy between~\eqref{eq:34-approx}
 and the $\beta =0$, $\alpha = \gamma =1$ model of Example~\ref{ex:34} 
 is now clear enough, but far from exact. Nevertheless, 
 we believe that~\eqref{eq:34-approx} is strongly suggestive that
 the conclusion of Example~\ref{ex:34} would also hold in the present setting, i.e., $\log \| W_n \| / \log n \to 3/4$. On the basis that the fluctuations of $|Y_n|$ are diffusive, it is then natural to conjecture the limiting direction, $\hW_n \to \Theta$, as in~\eqref{eq:flory-scaling}: cf.~\cite[\S 4.4.3]{mpw}.

\begin{figure}
\begin{center}
\begin{tikzpicture}[domain=0:10, scale = 1.4,decoration={
    markings,
    mark=at position 0.6 with {\arrow[scale=2]{>}}}]
\filldraw (0,0) circle (2pt);
\filldraw (3,1) circle (2pt);
\filldraw (5,0) circle (2pt);
\draw (0,0) -- (5,0);
\draw[dashed] (0,2.5) -- (5,0);
\draw[dashed] (0,2.5) -- (0,0);
\draw[dashed] (4,-0.5) -- (5,0);
\draw[dashed,<->] (-0.5,2.5) -- (-0.5,0);
\draw[dashed,<->] (0,3.0) -- (5,3.0);
\draw (3,1) -- (5,0);
\node at (-0.2,0)       {$0$};
\node at (5.1,-0.3)       {$W_n$};
\node at (3,1.3)       {$G_n$};
\node at (3.7,0.3)       {$2 \beta_n$};
\node at (-1.0,1.25)       {$|Y_n|$};
\node at (2.5,3.3)       {$X_n$};
\draw ({5+1.0*cos(180)},{0+1.0*sin(180)}) arc (180:207:1.0);
\draw ({5+1.0*cos(180)},{0+1.0*sin(180)}) arc (180:153:1.0);
\draw[double] ({5+0.6*cos(153)},{0+0.6*sin(153)}) arc (153:{207-360}:0.6);
\end{tikzpicture}
\end{center}
\caption{\label{fig:flory-XY} Associating processes $X, Y$ to the processes $W, G$. Depicted here is the symmetrized version of the model, where excluded is not only the triangle with angle~$2\beta_n$, but also its reflection in the line through $0$ and~$W_n$.}
\end{figure}

\appendix

\section{Path-sums of one-dimensional martingales}
\label{sec:one-dimensional-bounds}

On a probability space $(\Omega,\cF,\Pr)$,
let $S := ( S_n, n \in \ZP)$ be an 
$\RP$-valued process adapted to a filtration $(\cF_n, n \in \ZP)$. 
For $\gamma \in \RP$, $\beta \in \R$, and $n \in \N$, define 
the path sum
\begin{equation}
    \label{eq:Gamma-def}
    \Gamma_n (\beta, \gamma ) := \sum_{m=1}^n m^{-\beta} S_m^\gamma.
    \end{equation}
Here is the main result of this section.

\begin{theorem}
    \label{thm:S-bounds}
Let $S := ( S_n, n \in \ZP)$ be an 
$\RP$-valued process adapted to a filtration $(\cF_n, n \in \ZP)$. 
Suppose that
there exist $B, x_0 \in \RP$   and $\delta >0$ such that, for all $n \in \ZP$,
\begin{align}
\label{eq:S-jump-bound}
\Pr ( | S_{n+1} - S_n | \leq B ) & = 1 ; \\
\label{eq:S-ellipticity}
\Pr (  S_{n+1} - S_n \geq \delta \mid \cF_n ) & \geq \delta ; \text{ and}\\
\label{eq:S-zero-drift}
0  \leq \Exp ( S_{n+1} - S_n \mid \cF_n )  & \leq B \1 { S_n < x_0 } .\end{align}
Then, 
\begin{equation}
\label{eq:S-diffusive} 
\lim_{n \to \infty} \frac{ \log \max_{1 \leq m \leq n} S_m}{\log n } =  \frac{1}{2} , \as \end{equation}
Moreover, suppose that $\gamma \in \RP$ and $\beta \in \R$.
Then, if 
$2\beta \leq 2+\gamma$,
\begin{equation}
\label{eq:S-limit} 
\lim_{n \to \infty} \frac{ \log \Gamma_n (\beta, \gamma )}{\log n } = 1 + \frac{\gamma}{2} - \beta, \as \end{equation}
On the other hand, if $2\beta > 2 +\gamma$, then $\sup_{n \in \N} \Gamma_n (\beta, \gamma ) < \infty$, a.s.
\end{theorem}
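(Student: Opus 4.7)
The plan has three phases: moment bounds for $S_n$, the sup-norm asymptotics in~(\ref{eq:S-diffusive}), and the path-sum asymptotics in~(\ref{eq:S-limit}) together with the final convergence claim.

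\textbf{Moment bounds.} Mirroring the proof of Lemma~\ref{lem:Y-moments}, I would first establish that for every $p \geq 2$ there exist constants $0 < c_p \leq C_p < \infty$ with $c_p n^{p/2} \leq \Exp( S_n^p ) \leq C_p n^{p/2}$ for all $n \in \N$. For the upper bound, apply Taylor's theorem to $s \mapsto s^p$ at $S_n$ with increment $\Delta S_n$; the drift term $p S_n^{p-1} \Exp(\Delta S_n \mid \cF_n)$ vanishes outside $\{S_n < x_0\}$ by~(\ref{eq:S-zero-drift}) and contributes only a bounded additive constant per step inside that region, while the quadratic Taylor remainder is controlled by~(\ref{eq:S-jump-bound}), giving the induction. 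For the lower bound, apply Lemma~\ref{lem:ellip-technical} in the region $\{S_n \geq x_0\}$, where $S$ is a bounded-increment martingale: the one-sided ellipticity~(\ref{eq:S-ellipticity}) combined with the martingale property supplies a uniform positive lower bound on the conditional squared increment, driving the inductive scheme in~(\ref{eq:Y-moments-induction-hypothesis}). Extension to non-integer exponents proceeds via Jensen's inequality and uniform integrability, exactly as in the proof of Lemma~\ref{lem:Y-moments}.

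\textbf{Diffusive asymptotics.} Since $S_n$ is a non-negative submartingale with $\Exp(S_n^2) \leq C n$, Doob's $L^2$ maximal inequality gives $\Pr(\max_{1 \leq m \leq n} S_m \geq n^{(1/2)+\eps}) = O(n^{-2\eps})$; Borel--Cantelli along $n = 2^k$ then yields the upper half of~(\ref{eq:S-diffusive}), just as in the proof of Proposition~\ref{prop:mart-ab}. For the matching lower bound, Paley--Zygmund combined with the upper and lower moment bounds gives $\Pr(S_n \geq c \sqrt{n}) \geq \eta > 0$ uniformly in $n$. To upgrade this to an a.s.\ statement about the maximum, I would split time into geometrically spaced blocks $[n_k, n_{k+1}]$ with $n_{k+1}/n_k$ large; using the ellipticity~(\ref{eq:S-ellipticity}) to argue that, from any starting state at time $n_k$, the walk has probability bounded below of reaching $\sqrt{n_k}$ before time $n_{k+1}$, a Borel--Cantelli argument over essentially independent blocks delivers the required lower bound.

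\textbf{Path sum.} The case $2\beta > 2+\gamma$ is the most direct: Jensen combined with the upper moment bound gives $\Exp(S_m^\gamma) \leq C m^{\gamma/2}$, hence $\Exp(\Gamma_\infty(\beta,\gamma)) \leq C \sum_{m \geq 1} m^{(\gamma/2)-\beta} < \infty$, and since $\Gamma_n$ is non-decreasing, $\Gamma_\infty < \infty$ a.s. For $2\beta \leq 2+\gamma$, the upper bound in~(\ref{eq:S-limit}) is an immediate consequence of the a.s.\ bound $S_m \leq m^{(1/2)+\eps}$ (valid for large $m$) together with summation. The main obstacle is the a.s.\ lower bound on $\Gamma_n$: the plan is a second-moment (Paley--Zygmund) argument. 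The matching lower moment bound gives $\Exp(\Gamma_n) \geq c n^{1 + (\gamma/2) - \beta}$; controlling $\Exp(\Gamma_n^2)$ requires estimates of $\Exp(S_m^\gamma S_{m'}^\gamma)$ for $m < m'$, which by Cauchy--Schwarz and the moment bounds is at most $C(m m')^{\gamma/2}$, whence Paley--Zygmund yields $\Pr(\Gamma_n \geq c n^{1+(\gamma/2)-\beta}) \geq \eta$ uniformly in $n$. The a.s.\ statement then follows from the monotonicity of $\Gamma_n$ in $n$ together with Borel--Cantelli along a sufficiently sparse subsequence.
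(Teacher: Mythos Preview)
Your upper-bound arguments (moment bounds, Doob's inequality, the a.s.\ upper bound on the running maximum, and the upper half of~\eqref{eq:S-limit}) are essentially the paper's, and your $L^1$ treatment of the case $2\beta > 2+\gamma$ is a clean alternative to the paper's a.s.\ route.

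The genuine gap is in both lower bounds. Paley--Zygmund gives you only $\Pr(\Gamma_n \geq c\, n^{\vartheta}) \geq \eta$ uniformly in $n$, with $\vartheta = 1 + \gamma/2 - \beta$. A uniform positive-probability bound does \emph{not} upgrade to an almost-sure $\liminf$ statement via ``monotonicity plus Borel--Cantelli along a sparse subsequence'': the events $\{\Gamma_{n_k} \geq c\, n_k^{\vartheta}\}$ are not independent, and their complements have probability bounded away from zero, so neither Borel--Cantelli lemma applies. Monotonicity of $\Gamma_n$ does not rescue this, because the threshold $c\, n^\vartheta$ grows with $n$; knowing $\Gamma_{n_k} \geq c\, n_k^\vartheta$ along some random subsequence tells you nothing about $\Gamma_n / n^\vartheta$ for $n$ in the (potentially arbitrarily long) gaps. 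The same structural problem affects your lower bound for the running maximum: the phrase ``essentially independent blocks'' is carrying weight you have not justified. A repair is possible---run Paley--Zygmund \emph{conditionally} on $\cF_{n_k}$ for the block increment $\Gamma_{n_{k+1}} - \Gamma_{n_k}$, then use iterated conditioning to show that failure over $C\log k$ consecutive dyadic blocks has summable probability---but this requires conditional moment bounds and is substantially more than what you sketched.

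The paper avoids this difficulty by proving a stronger high-probability statement. The key is Lemma~\ref{lem:many-big-values}: with probability at least $1 - n^{-\eps}$, at least $n^{1-\eps}$ of the times $m \in [n^{1-\eps}, n]$ satisfy $S_m \geq n^{(1/2)-\eps}$. This is obtained by combining a Kolmogorov-type hitting estimate (Lemma~\ref{lem:kolmogorov}) to reach level $\sim n^{(1/2)-\eps}$ quickly with a Doob maximal inequality to show the process stays near that level for $\sim n^{1-\eps}$ steps. Because the complementary probability $n^{-\eps}$ is summable along $n = 2^k$, the \emph{first} Borel--Cantelli lemma applies directly, and the many-big-values conclusion feeds straight into the a.s.\ lower bound on $\Gamma_n$. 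The paper then obtains the $\liminf$ for the running maximum as a corollary of the $\beta = 0$, $\gamma = 1$ case of~\eqref{eq:S-limit}, via the inequality $\sum_{m=1}^n S_m \leq n \max_{1 \leq m \leq n} S_m$, rather than by a separate argument.
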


We prove Theorem~\ref{thm:S-bounds} in the rest of this section.
First, the upper bounds
are straightforward consequences of a maximal inequality; the statement is the next result.

\begin{lemma}
    \label{lem:S-upper-bounds}
Let $S := ( S_n, n \in \ZP)$ be an 
$\RP$-valued process adapted to a filtration $(\cF_n, n \in \ZP)$. 
Suppose that
there exist $B, x_0 \in \RP$ such that~\eqref{eq:S-jump-bound}
and~\eqref{eq:S-zero-drift} hold.
Then, 
\begin{equation}
\label{eq:S-diffusive-upper-bound} 
 \limsup_{n \to \infty} \frac{\log \max_{1 \leq m \leq n} S_m}{\log n} \leq \frac{1}{2}, \as \end{equation}
    \end{lemma}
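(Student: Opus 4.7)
The plan is to mimic the structure of the proof of Proposition~\ref{prop:mart-ab}: obtain a linear-in-$n$ bound on $\Exp(S_n^2)$, feed this into Doob's maximal inequality for the submartingale $S_n^2$, and conclude via Borel--Cantelli along a geometric subsequence.

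First, note that $S$ is a non-negative submartingale by~\eqref{eq:S-zero-drift}, and since $x \mapsto x^2$ is convex and non-decreasing on $\RP$, conditional Jensen's inequality gives that $S_n^2$ is also a non-negative submartingale. Next, I expand
\[ \Exp \bigl[ S_{n+1}^2 - S_n^2 \bigmid \cF_n \bigr] = \Exp \bigl[ (S_{n+1}-S_n)^2 \bigmid \cF_n \bigr] + 2 S_n \Exp \bigl[ S_{n+1} - S_n \bigmid \cF_n \bigr]. \]
By the bounded-increments hypothesis~\eqref{eq:S-jump-bound}, the first term is at most $B^2$. The drift hypothesis~\eqref{eq:S-zero-drift} ensures that the second term is at most $2 S_n \cdot B \1{S_n < x_0} \leq 2 B x_0$, so altogether $\Exp ( S_{n+1}^2 - S_n^2 \mid \cF_n ) \leq B^2 + 2Bx_0$, a.s. Telescoping and taking expectations gives $\Exp(S_n^2) \leq \Exp(S_0^2) + (B^2 + 2Bx_0) n$, which is $O(n)$ since $S_0$ is fixed.

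Fix $\eps > 0$. Since $S_n^2$ is a non-negative submartingale, Doob's maximal inequality yields
\[ \Pr \Bigl( \max_{0 \leq m \leq n} S_m \geq n^{(1/2)+\eps} \Bigr) = \Pr \Bigl( \max_{0 \leq m \leq n} S_m^2 \geq n^{1+2\eps} \Bigr) \leq \frac{\Exp(S_n^2)}{n^{1+2\eps}} = O(n^{-2\eps}). \]
Evaluating along the subsequence $n_k = 2^k$ and summing yields a convergent series, so by Borel--Cantelli there is a.s.\ a (random) $k_0 < \infty$ with $\max_{0 \leq m \leq 2^k} S_m \leq (2^k)^{(1/2)+\eps}$ for all $k \geq k_0$. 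Sandwiching an arbitrary $n \in \N$ between consecutive powers of~$2$ gives $\max_{0 \leq m \leq n} S_m \leq (2n)^{(1/2)+\eps}$ for all but finitely many~$n$, so $\limsup_{n\to\infty} \log \max_{0 \leq m \leq n} S_m / \log n \leq (1/2)+\eps$, a.s. Since $\eps > 0$ is arbitrary, \eqref{eq:S-diffusive-upper-bound} follows.

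There is no serious obstacle: the argument is entirely routine, and the only point that uses more than the bounded-increment and submartingale properties is the observation that the drift term contributes $O(1)$ to the quadratic increment, thanks to the restriction $\1{S_n < x_0}$ in~\eqref{eq:S-zero-drift}.
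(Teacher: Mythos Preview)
Your proof is correct and follows essentially the same approach as the paper: the paper derives the identical bound $\Exp(S_{n+1}^2 - S_n^2 \mid \cF_n) \leq B^2 + 2Bx_0$ (this is~\eqref{eq:S-sq-upper}) and then invokes Doob's inequality plus a subsequence/Borel--Cantelli argument, citing Theorem~2.8.1 of~\cite{mpw} rather than writing out the details you supply. One minor remark: you assert that $\Exp(S_0^2)$ is finite ``since $S_0$ is fixed'', but the lemma as stated does not assume this; in the paper's application $S_0 = |Y_0|$ is indeed fixed, so this is harmless in context.
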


Before giving the proof of Lemma~\ref{lem:S-upper-bounds},
we make some preliminary calculations. 
Write $D_n :=  S_{n+1} - S_n$. Note that~\eqref{eq:S-jump-bound} says that $|D_n| \leq B$, a.s.,
while~\eqref{eq:S-zero-drift} implies that $0 \leq  \Exp ( D_n \mid \cF_n ) \leq B \1 {S_n < x_0 }$, a.s.
Hence, for every $n \in \ZP$, a.s.,
\begin{align}
\label{eq:S-sq-upper}
 \Exp ( S_{n+1}^2 - S_n^2 \mid \cF_n ) & = 2 S_n \Exp (D_n \mid \cF_n ) + \Exp ( D_n^2 \mid \cF_n ) \leq 2 B x_0 + B^2.
\end{align}
On the other hand, by~\eqref{eq:S-ellipticity},
\begin{equation}
    \label{eq:S-variance}
 \Exp ( D_n^2 \mid \cF_n ) \geq \delta^2 \Pr  (  D_n \geq \delta  \mid \cF_n ) \geq \delta^3, \as \end{equation}
Therefore, using the lower bound in~\eqref{eq:S-zero-drift} and~\eqref{eq:S-variance}, 
we obtain
\begin{align}
\label{eq:S-sq-lower}
 \Exp ( S_{n+1}^2 - S_n^2 \mid \cF_n ) & = 2 S_n \Exp ( D_n \mid \cF_n ) + \Exp ( D_n^2 \mid \cF_n ) 
 \geq \delta^3, \as,
\end{align}
a fact that we will use
in the proof of Lemma~\ref{lem:kolmogorov} below.
    
\begin{proof}[Proof of Lemma~\ref{lem:S-upper-bounds}]
The result follows from the bound~\eqref{eq:S-sq-upper}
on the increments of $S_n^2$ together with Doob's inequality and a standard subsequence argument:
concretely, one may apply Theorem~2.8.1 of~\cite[p.~78]{mpw} 
to obtain that for any $\eps>0$, 
a.s., for all but finitely many $n \in \N$, $\max_{0 \leq m \leq n} S_m \leq n^{(1/2)+\eps}$. Since $\eps>0$ was arbitrary, we get~\eqref{eq:S-diffusive-upper-bound}.
\end{proof}

Lower bounds are 
more difficult to obtain, and
most of the work for
Theorem~\ref{thm:S-bounds} is needed for the `$\liminf$' part.
Here one can construct a
proof along the lines of the excursion approach from~\cite{hmw1} (see also ~\cite[\S\S 3.7--3.9]{mpw}), but in the present context a related, but more direct approach is available. The following lemma is the main ingredient.

\begin{lemma}
\label{lem:many-big-values}
Let $S := ( S_n, n \in \ZP)$ be an 
$\RP$-valued process adapted to a filtration $(\cF_n, n \in \ZP)$. 
Suppose that
there exist $B, x_0 \in \RP$   and $\delta >0$ such that~\eqref{eq:S-jump-bound}--\eqref{eq:S-zero-drift} hold. Then, for every $\eps >0$, for all $n \in \N$ sufficiently large,
\[ \Pr \left[   \sum_{m = \lfloor n^{1-\eps} \rfloor}^{n} \1 { S_m \geq n^{(1/2)-\eps} } \geq n^{1-\eps}
\right]  \geq 1 - n^{-\eps}. \]
\end{lemma}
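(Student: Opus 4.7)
The plan is to prove Lemma~\ref{lem:many-big-values} by combining a uniform conditional Paley--Zygmund bound with an iteration across disjoint time windows. First I would establish two moment bounds: (i) $\Exp[S_m^2] \asymp m$, which follows from the calculations~\eqref{eq:S-sq-upper}--\eqref{eq:S-sq-lower} showing $\Exp[(S_{m+1}-S_m)^2 \mid \cF_m] \in [\delta^3, 2Bx_0 + B^2]$; and (ii) $\Exp[S_m^4] \leq Cm^2$, obtained by Taylor-expanding $(S_m+\Delta S_m)^4-S_m^4$ and iterating in the spirit of the proof of Lemma~\ref{lem:Y-moments}. The same Taylor induction in fact gives conditional analogues that are uniform in the starting state: for any $k$ and $j \geq 1$,
\[
\Exp[S_{k+j}^2 \mid \cF_k] \geq S_k^2 + \delta^3 j, \qquad \Exp[S_{k+j}^4 \mid \cF_k] \leq C\bigl(S_k^4 + j S_k^2 + j^2\bigr).
\]

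The crucial observation is that Paley--Zygmund applied conditionally to $S_{k+j}^2$ given $\cF_k$ yields, for all $j \geq 2t^2/\delta^3$ with $t := n^{1/2-\eps}$, the lower bound
\[
\Pr(S_{k+j} \geq t \mid \cF_k) \;\geq\; \Bigl(1 - \tfrac{t^2}{\Exp[S_{k+j}^2 \mid \cF_k]}\Bigr)^{\!2} \cdot \frac{(\Exp[S_{k+j}^2 \mid \cF_k])^2}{\Exp[S_{k+j}^4 \mid \cF_k]} \;\geq\; p
\]
for some constant $p>0$ \emph{independent of the starting state} $S_k$. Indeed, writing $u := S_k^2$ and $v := j$, the ratio $(u + \delta^3 v)^2/(u^2 + uv + v^2)$ is bounded below by a constant term-by-term, regardless of how $u$ compares with $v$; and the prefactor is bounded below by $1/4$ since $\Exp[S_{k+j}^2 \mid \cF_k] \geq 2 t^2$.

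With this uniform pointwise bound, I partition $[\lfloor n^{1-\eps}\rfloor, n]$ into $K := \lceil n^{\eps/2} \rceil$ consecutive windows $J_1, \ldots, J_K$ of length $\ell \asymp n^{1-\eps/2}$. Writing $N_k := \sum_{m \in J_k} \1{S_m \geq t}$, the pointwise bound applied conditionally on $\cF_{\min J_k}$ to the last $\ell - 2t^2/\delta^3 \sim \ell$ indices of $J_k$ (which lies in the eligible range, since $\ell \gg t^2$) gives $\Exp[N_k \mid \cF_{\min J_k}] \geq p\ell/2$. A second Paley--Zygmund applied to $N_k$ itself, using the covariance bound $\Exp[N_k^2 \mid \cF_{\min J_k}] \leq \ell \cdot \Exp[N_k \mid \cF_{\min J_k}]$ (immediate from $\Pr(S_m \geq t, S_{m'} \geq t \mid \cG) \leq \Pr(S_m \geq t \mid \cG)$), yields $\Pr(N_k \geq p\ell/4 \mid \cF_{\min J_k}) \geq q$ for a constant $q>0$. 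Setting $E_k := \{N_k \geq p\ell/4\}$, iteration via the tower property gives
\[
\Pr\Bigl(\bigcap_{k=1}^K E_k^c\Bigr) \;\leq\; (1-q)^K \;\leq\; \exp(-q \, n^{\eps/2}) \;\leq\; n^{-\eps}
\]
for all $n$ large; on the complementary event at least one $N_k \geq p\ell/4 \geq n^{1-\eps}$, which delivers the claim.

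The main obstacle is the uniformity in $S_k$ of the conditional Paley--Zygmund step: a priori, the $S_k^4$ term in the conditional fourth-moment bound could swamp $j^2$, but the matching $S_k^4$ term in the numerator $(S_k^2 + \delta^3 j)^2$ is what rescues the ratio. Tracking this $S_k$-dependence carefully in the fourth-moment Taylor induction is the one genuinely delicate point; the rest of the argument is routine application of Paley--Zygmund and the tower property.
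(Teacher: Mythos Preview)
Your argument is correct, but it proceeds quite differently from the paper's. The paper does not use Paley--Zygmund at all: instead it first applies the Kolmogorov-type hitting bound of Lemma~\ref{lem:kolmogorov} to show that, with probability at least $1-n^{-\eps}$, the process reaches level $2(N_\eps(n))^{(1/2)-\eps}$ at some random time $T_n \in [N_\eps(n), 2N_\eps(n)]$; it then exploits the fact that, above $x_0$, $S$ is a genuine martingale, and applies Doob's maximal inequality to the stopped process $M_{n,m}=(S_{T_n+m\wedge\tau_n}-S_{T_n})\1{T_n<\infty}$ to show that $S$ stays above $(N_\eps(n))^{(1/2)-\eps}$ for a consecutive block of length $n^{1-5\eps}$. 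Thus the paper's count comes from a \emph{single} window of stickiness after a hitting time, whereas yours comes from \emph{many} disjoint windows via a pointwise second-moment estimate iterated through the tower property. Your route trades the pathwise hitting/sticking picture for moment calculations: it needs the conditional fourth-moment bound (which you correctly identify as the delicate point), but avoids the stopping-time construction and Lemma~\ref{lem:kolmogorov} entirely. The paper's argument is perhaps more transparent about \emph{why} the process spends so long above the threshold (martingale concentration around a level once reached), while yours is more robust in that it would transfer to settings where the near-martingale structure above $x_0$ is weaker but moment bounds survive.
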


To prove this, we need the following relative of Kolmogorov's `other' inequality.

\begin{lemma}
    \label{lem:kolmogorov}
Let $S := ( S_n, n \in \ZP)$ be an 
$\RP$-valued process adapted to a filtration $(\cF_n, n \in \ZP)$. 
Suppose that
there exist $B, x_0 \in \RP$   and $\delta >0$ such that~\eqref{eq:S-jump-bound}--\eqref{eq:S-zero-drift} hold. 
Then, for all $x \in \RP$ and all $n \in\ZP$,
\begin{equation}
    \label{eq:kolmogorov}
    \Pr \Bigl( \max_{n \leq m \leq 2n} S_m \geq x \Bigr)
    \geq 1 - \frac{(x+B)^2}{\delta^3 n} . \end{equation}
\end{lemma}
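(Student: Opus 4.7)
The plan is to adapt the classical Kolmogorov ``other'' inequality argument to the submartingale setting at hand. The crucial input, already recorded at~\eqref{eq:S-sq-lower}, is the one-step lower bound $\Exp(S_{m+1}^2 - S_m^2 \mid \cF_m) \geq \delta^3$ which comes from combining the ellipticity assumption~\eqref{eq:S-ellipticity} with the non-negative drift from~\eqref{eq:S-zero-drift}. This says exactly that $M_m := S_m^2 - \delta^3 m$ is a submartingale, which plays the role of the variance identity in the independent-sum version of the inequality.

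I would argue conditionally on $\cF_n$. The case $S_n \geq x$ is trivial since then $\max_{n \leq m \leq 2n} S_m \geq x$ almost surely, so assume $S_n < x$. Introduce the bounded stopping time
\[ \tau := \min\bigl(2n, \, \inf\{m \geq n : S_m \geq x\}\bigr) \in [n, 2n] , \]
and observe that on $\{S_n < x\}$ we always have $S_\tau \leq x + B$: either $\tau > n$ is the first hitting time, whence by bounded increments~\eqref{eq:S-jump-bound} $S_\tau \leq S_{\tau-1} + B < x+B$, or else $\tau = 2n$ and no hit occurs, in which case $S_\tau = S_{2n} < x \leq x+B$.

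Applying optional stopping to the submartingale $M$ at the bounded stopping time $\tau$ gives
\[ \Exp(S_\tau^2 \mid \cF_n) - \delta^3 \Exp(\tau \mid \cF_n) \geq S_n^2 - \delta^3 n , \]
which rearranges (dropping $S_n^2 \geq 0$) to
\[ \delta^3 \Exp(\tau - n \mid \cF_n) \leq \Exp(S_\tau^2 \mid \cF_n) \leq (x+B)^2 , \quad \text{on } \{S_n < x\} . \]
On the event $\{\max_{n \leq m \leq 2n} S_m < x\}$ the stopping time attains its cap, so $\tau - n = n$; this yields $\Exp(\tau - n \mid \cF_n) \geq n \Pr(\max_{n \leq m \leq 2n} S_m < x \mid \cF_n)$. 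Combining the two bounds gives
\[ \Pr\bigl(\max_{n \leq m \leq 2n} S_m < x \bigmid \cF_n\bigr) \leq \frac{(x+B)^2}{\delta^3 n} , \]
on $\{S_n < x\}$; the same bound holds trivially on $\{S_n \geq x\}$. Taking expectation over $\cF_n$ and rearranging yields~\eqref{eq:kolmogorov}.

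No serious obstacle arises: the submartingale property of $S_m^2 - \delta^3 m$ has already been isolated in the preamble to the lemma, and the only minor care needed is handling the boundary case $\tau = n$, which is dispatched by casing on whether $S_n$ already exceeds $x$.
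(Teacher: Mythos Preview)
Your proof is correct and follows essentially the same approach as the paper: both exploit the submartingale property of $S_m^2 - \delta^3 m$ (recorded at~\eqref{eq:S-sq-lower}), stop at the first passage time above~$x$, and use the overshoot bound $S_\tau \leq x+B$. The only cosmetic difference is that the paper sums the one-step increments to bound $\Exp(\sigma_{n,x})$ and then applies Markov's inequality, whereas you apply optional stopping at the capped time $\tau \leq 2n$ and argue conditionally on $\cF_n$; these are equivalent packagings of the same idea.
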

\begin{proof}
We use a variation on an argument that yields the (sub)martingale version of Kolmogorov's `other' inequality
(see Theorem 2.4.12 of~\cite[p.~45]{mpw}).
Fix $n \in \ZP$, $x \in \RP$,
and let $\sigma_{n,x} := \inf \{ m \in \ZP : S_{n+m} \geq x\}$.
Then from~\eqref{eq:S-sq-lower} we have that
\[ \Exp [ S_{n+ (m+1) \wedge \sigma_{n,x}}^2 -  S_{n+m \wedge \sigma_{n,x}}^2 \mid \cF_{n +  m \wedge \sigma_{n,x}} ] \geq \delta^3 \1 { \sigma_{n,x} > m}.
\]
Taking expectations  and then summing from $m=0$ to $m=k-1$ we obtain
\[ 
\delta^3 \sum_{m=0}^{k-1} \Pr (  \sigma_{n,x} > m  )
\leq \Exp \bigl[ S_{n+k \wedge \sigma_{n,x}}^2 -  S_{n}^2 \bigr]
\leq ( x + B)^2 ,\]
using the fact that $0 \leq S_{n+k \wedge \sigma_{n,x}} \leq \max( S_n , x+B)$, a.s., 
by~\eqref{eq:S-jump-bound}.
It follows that
\[ \Exp ( \sigma_{n,x} ) = \lim_{k \to \infty} \sum_{m=0}^{k-1} \Pr (  \sigma_{n,x} > m  )
\leq \delta^{-3} ( x + B)^2 , \text{ for all } n \in \ZP.
\]
For every $n \in \ZP$ and $x \in \RP$, the event $\sigma_{n,x} \leq n$
implies that $\max_{n \leq m \leq 2n} S_m \geq x$.
Then, by Markov's inequality,
 we obtain, for all $x \in \RP$ and all $n \in\ZP$,
\[
    \Pr \Bigl( \max_{n \leq m \leq 2n} S_m \geq x \Bigr)
    \geq 1 - \Pr ( \sigma_{n,x} \geq n ) 
    \geq 1 - \frac{(x+B)^2}{\delta^3 n} , \]
    as claimed at~\eqref{eq:kolmogorov}.
    \end{proof}

    \begin{proof}[Proof of Lemma~\ref{lem:many-big-values}]
Fix $\eps \in (0,\frac{1}{5})$ and define, for $n \in \N$, $N_\eps (n):= \lfloor n^{1-\eps} \rfloor$, and  
\[ T_n := N_\eps(n) + \inf \{ m \in \{0,1,\ldots, N_\eps (n)\} : S_{N_\eps(n) + m} \geq 2 ( N_\eps (n) )^{(1/2)-\eps} \}, \]
with the usual convention that $\inf \emptyset := \infty$.
Then, by~\eqref{eq:kolmogorov},
\begin{align}
\label{eq:T_n_bound}
    \Pr ( T_n < \infty ) =  \Pr \Bigl( \max_{N_\eps(n)  \leq m \leq 2 N_\eps (n) } S_m \geq 2 ( N_\eps (n) )^{(1/2)-\eps} \Bigr)
& \geq 1 - \frac{(2 ( N_\eps (n) )^{(1/2)-\eps} +B)^2}{\delta^3 N_\eps (n)} \nonumber\\
& \geq 1 - n^{-\eps} ,
\end{align}
for all $n$ sufficiently large. 
For $n \in \N$
large enough so that $( N_\eps (n) )^{(1/2)-\eps} > x_0$, 
we have $S_{T_n} > x_0$ on the event $T_n < \infty$. Define $\tau_n := \inf \{ m \in \ZP : S_{T_n + m} \leq x_0 \}$, and consider
\[ M_{n,m} := ( S_{T_n + m \wedge \tau_n} - S_{T_n} ) \1 { T_n < \infty}, \text{ for } m \in \ZP.
\]
Note that, by~\eqref{eq:S-jump-bound},
we have $|M_{n,m}| \leq B m$, a.s., for all $m \in \ZP$.
Moreover,
\begin{align*}
    \Exp ( M_{n,m+1} - M_{n,m} \mid \cF_{T_n+m} )
& = \Exp ( M_{n,m+1} - M_{n,m} \mid \cF_{T_n+m} ) \1 {T_n < \infty, \,  m < \tau_n}\\
& = \Exp ( S_{T_n+m+1} -  S_{T_n+m}  \mid \cF_{T_n+m} ) \1 {T_n < \infty, \,  m < \tau_n} = 0,
\end{align*}
 by~\eqref{eq:S-zero-drift} and the fact that $m < \tau_n$. Hence $M_{n,m}$, $m \in \ZP$,
is a martingale adapted to $\cF_{T_n+m}$. 
Then, by the martingale property and~\eqref{eq:S-jump-bound},
\[
\Exp ( M_{n,m+1}^2 - M_{n,m}^2 \mid \cF_{T_n+m} ) = 
 \Exp ( ( M_{n,m+1} - M_{n,m} )^2 \mid \cF_{T_n+m} )   \leq  B^2, \as, 
\]
which,  with the fact that $M_{n,0} =0$, implies that
$\Exp ( M_{n,m}^2 \mid \cF_{T_n} ) \leq B^2 m$.
Hence, by Doob's
inequality (e.g.~\cite[p.~35]{mpw}) applied to the non-negative submartingale
$M_{n,m}^2$,  
\begin{equation}
    \label{eq:doob-M}
\Pr \Bigl( \max_{0 \leq m \leq n^{1-5\eps}} M_{n,m}^2 \geq  n^{1- 4\eps} \Bigmid \cF_{T_n} \Bigr)
\leq \frac{\Exp (M_{n,\lfloor n^{1-5\eps} \rfloor}^2 \mid \cF_{T_n})}{n^{1-4\eps}}
\leq n^{-\eps},  \as, 
\end{equation}
for all $n \geq n_0$ for some sufficiently large (deterministic) $n_0 \in \N$. 
 It follows that
 \begin{align*}
 & \Pr \left[ \{ T_n < \infty \} \cap \left\{ \max_{0 \leq m \leq n^{1-5\eps}} | M_{n,m} | \leq   ( N_\eps (n) )^{(1/2) - \eps} \right\} \right] \\
& \geq \Exp \left[ \1 { T_n < \infty }
\Pr \Bigl( \max_{0 \leq m \leq n^{1-5\eps}} M_{n,m}^2 \leq n^{1- 4\eps} \Bigmid \cF_{T_n} \Bigr)
\right] \\
& \geq (1 - n^{-\eps} ) \Pr ( T_n < \infty ) \geq 1 - 2 n^{-\eps},
 \end{align*}
 for all $n$ sufficiently large,
 by~\eqref{eq:T_n_bound}, \eqref{eq:doob-M},
 and the fact that $( N_\eps (n) )^{1 - 2\eps} > n^{1-4\eps}$
 for all large enough~$n$.
If $T_n < \infty$ and $\max_{0 \leq m \leq n^{1-5\eps}} | M_{n,m} | \leq   ( N_\eps (n) )^{(1/2) - \eps}$ both occur, then 
 \[ S_{T_n +m} \geq S_{T_n} - \max_{0 \leq k \leq n^{1-5\eps}} | M_{n,k} |
 \geq   ( N_\eps (n) )^{(1/2) - \eps} , \text{ for all } m \in \{0,1,\ldots, \lfloor n^{1-5\eps} \rfloor \} .\]
In particular, it follows that, for all $n$ large enough
\[ \Pr \left[ \sum_{m=N_\eps (n)}^{n} \1 { S_m \geq   ( N_\eps (n) )^{(1/2)-\eps} } \geq n^{1-6\eps} \right] \geq 1 - 2n^{-\eps} .\]
Since $N_\eps (n) \geq n^{1-2\eps}$, for large $n$, and $\eps>0$ was arbitrary, this concludes the proof.
\end{proof}

\begin{proof}[Proof of Theorem~\ref{thm:S-bounds}]
The `$\limsup$' half of~\eqref{eq:S-diffusive} is given by~\eqref{eq:S-diffusive-upper-bound}.
Moreover, from~\eqref{eq:S-diffusive-upper-bound} we have that,
for any $\eps>0$, $\max_{0 \leq \ell \leq n} S_\ell \leq n^{(1/2)+\eps}$
for all but finitely many $n \in \N$, a.s. In other words, there is a (random) $C_\eps < \infty$
such that $\max_{0 \leq \ell \leq n} S_\ell \leq C_\eps n^{(1/2)+\eps}$ for all $n \in \N$.
Hence, since $\gamma \in \RP$,
\begin{align*}
\Gamma_n (\beta, \gamma) & \leq \sum_{m=1}^n m^{-\beta} \Bigl( \max_{0 \leq \ell \leq m} S_\ell \Bigr)^\gamma  \leq C_\eps \sum_{m=1}^{n} m^{-\beta} m^{(\gamma/2)+\gamma\eps}.
\end{align*}
Suppose $2+\gamma \geq 2\beta$; then for any $\eps>0$ it holds that
$(\gamma/2)+\gamma \eps - \beta > -1$, so that 
$\Gamma_n (\beta, \gamma) \leq C  n^{1+(\gamma/2)+\gamma \eps - \beta}$,
where (random) $C< \infty$, a.s. Since $\eps >0$ can be chosen to be arbitrarily small,
we obtain the `$\limsup$' half of~\eqref{eq:S-limit}. On the other hand, if $2+\gamma < 2 \beta$, then
we can choose $\eps>0$ small enough so that $(\gamma/2)+\gamma \eps - \beta < -1-\eps$,
and we conclude that 
$\Gamma_n (\beta, \gamma) \leq  C_\eps \sum_{m=1}^{n} m^{-1-\eps}$,
from which it follows that $\sup_{n \in \N} \Gamma_n (\beta, \gamma) < \infty$, a.s.

It remains to verify the `$\liminf$' parts of~\eqref{eq:S-diffusive} and~\eqref{eq:S-limit}.
To this end, 
consider event
\[ E_\eps (n) :=
  \sum_{m=\lfloor n^{1-\eps} \rfloor}^{n} \1 { S_m \geq n^{(1/2)-\eps} } \geq n^{1-\eps}.
\]
Then, if $E_\eps (n)$ occurs, we have, since $S_m \geq 0$ for all $m \in \ZP$, and $\gamma \in \RP$,
\begin{align*}
\Gamma_n (\beta , \gamma ) & \geq \sum_{m= \lfloor n^{1-\eps} \rfloor}^n m^{-\beta} S_m^\gamma \1 { S_m \geq n^{(1/2)-\eps} } \\
& \geq n^{(\gamma/2)-\gamma\eps} \Bigl( \min_{\lfloor n^{1-\eps} \rfloor \leq m \leq n}  m^{-\beta} \Bigr)
\sum_{m= \lfloor n^{1-\eps} \rfloor}^n  \1 { S_m \geq n^{(1/2)-\eps} } \\
& \geq n^{\vartheta - \gamma \eps - \eps | \beta | -\eps}, \text{ where } \vartheta := 1 + \frac{\gamma}{2} - \beta.
\end{align*}
Since $\eps>0$ was arbitrary, we conclude
from Lemma~\ref{lem:many-big-values} that, for every $\eps >0$,
\begin{equation}
    \label{eq:Gamma-tail}
    \Pr ( \Gamma_n (\beta , \gamma )  \geq n^{\vartheta -\eps} ) \geq   1 - n^{-\eps} .\end{equation}
From~\eqref{eq:Gamma-tail}
with $n = 2^k$ and an application of 
the Borel--Cantelli lemma we obtain that, a.s., for all but finitely many $k \in \ZP$,
$\Gamma_{2^k} (\beta , \gamma ) \geq (2^{k})^{\vartheta-\eps}$ occurs.
Every $n \in \N$ has $2^{k_n} \leq n < 2^{k_n+1}$ for some $k_n \in \ZP$
with $\lim_{n \to \infty} k_n = \infty$. Hence, since $\Gamma_m (\beta, \gamma)$
is non-decreasing in~$m$, for all but finitely many $n \in \N$,
\[ 
\Gamma_{n} (\beta , \gamma )
\geq \Gamma_{2^{k_n}} (\beta , \gamma )
\geq (2^{k_n})^{\vartheta-\eps}
\geq 2^{-\vartheta} \cdot (2^{k_n+1})^{\vartheta-\eps}
\geq 2^{-\vartheta} n^{\vartheta-\eps}.
\]
Since $\eps>0$ was arbitrary, we obtain the `$\liminf$' part of~\eqref{eq:S-limit}.
Finally, from the $\beta =0$, $\gamma =1$ case of~\eqref{eq:S-limit}
and the fact that we have $\sum_{m=1}^n S_m \leq n \cdot \max_{1 \leq m \leq n} S_m$, we have
\[ \frac{3}{2} = \liminf_{n \to \infty} \frac{ \log \sum_{m=1}^n S_m}{ \log n}
\leq 1 + \liminf_{n \to \infty} \frac{\max_{1 \leq m \leq n} S_m}{ \log n} ,\]
from which  the `$\liminf$' part of~\eqref{eq:S-diffusive} follows.
\end{proof}

\section*{Acknowledgements}
\addcontentsline{toc}{section}{Acknowledgements}

The authors are grateful to  Miha Bre\v sar and  Aleksandar Mijatovi\'c for work 
on a continuum relative of the model considered here,
and to Francis Comets, Iain MacPhee, Serguei Popov, and Stas Volkov
for discussions related to the barycentric excluded volume process. We also thank two
anonymous referees for their suggestions, and corrections, which have improved the paper.
The work of CdC, MM, and AW was supported by EPSRC grant EP/W00657X/1. The work of VS was supported by LMS grant 42248.

\end{document}